\documentclass{article}
\usepackage{a4,multicol}
\usepackage[latin1]{inputenc}
\usepackage[leqno]{amsmath}
\usepackage{amsfonts}
\usepackage{amsthm,amssymb,amstext,amsxtra}
\usepackage{mathbbol}
\usepackage{graphicx}
\usepackage{mathrsfs}
\usepackage{color}
\usepackage{verbatim}
\usepackage{caption}
\usepackage{epsfig}
\setlength{\parindent}{0cm}
\sloppy

\newcommand{\floor}[1]{\left\lfloor #1 \right\rfloor}
\newcommand{\ceil}[1]{\left\lceil #1 \right\rceil}

\newcommand{\NN}{\mathbb{N}}

\newcommand{\ZZ}{\mathbb{Z}}
\newcommand{\RR}{\mathbb{R}}
\newcommand{\PP}{\mathbb{P}}
\newcommand{\KK}{\mathbb{K}}
\newcommand{\col}{\text{col}}
\newcommand{\row}{\text{row}}
\newcommand{\defn}[1]{{\it \color{red} #1}}
\newcommand{\twovec}[2]{\left(\begin{smallmatrix}#1\\#2\end{smallmatrix}\right)}
\newcommand{\mtwovec}[2]{\begin{pmatrix}#1\\#2\end{pmatrix}}
\newcommand{\threevec}[3]{\left(\begin{smallmatrix}#1\\#2\\#3\end{smallmatrix}\right)}
\newcommand{\mthreevec}[3]{\begin{pmatrix}#1\\#2\\#3\end{pmatrix}}
\newcommand{\interval}[3]{#1|_{[#2,#3]}}
\newcommand{\cone}{\mathsf{cone}}
\newcommand{\txtones}{\mathsf{ones}}
\newcommand{\txtlength}{\mathsf{length}}

\newcommand{\ppo}{\Pi^\circ}

\newcommand{\pS}[2]{\ppo_{\downarrow,{#1},{#2}}}
\newcommand{\pC}[2]{\ppo_{\rightarrow,{#1},{#2}}}
\newcommand{\gD}[2]{d^\downarrow_{#1,#2}}
\newcommand{\gR}[2]{d^\rightarrow_{#1,#2}}

\newcommand{\vpiper}[4]{\mathcal{V}_{#1,#2,#3}^{#4}}
\newcommand{\vpipe}[3]{\mathcal{V}_{#1,#2}^{#3}}
\newcommand{\hpiper}[4]{\mathcal{H}_{#1,#2,#3}^{#4}}
\newcommand{\hpipe}[3]{\mathcal{H}_{#1,#2}^{#3}}

\newcommand{\nwsearrow}{\mathrel{\text{$\nwarrow$\llap{$\searrow$}}}}
\newcommand{\neswarrow}{\mathrel{\text{$\nearrow$\llap{$\swarrow$}}}}
\newcommand{\nwneseswarrow}{\mathrel{\text{$\neswarrow$\llap{$\nwsearrow$}}}}

\newcommand{\diagrefl}{\nwsearrow\!}
\newcommand{\orgrefl}{\nwneseswarrow\!}

\newcommand{\red}[1]{\underline{#1}}
\newcommand{\ones}{\mathbb{1}}
\newcommand{\period}[1]{\text{period}#1}

\newcommand{\swap}[2]{\text{swap}(#1,#2)}

\newcommand{\set}[2]{\left\{#1\;\middle|\;#2\right\}}
\newcommand{\define}{\mathrel{\mathop:}=}
\newcommand{\conv}{\operatorname{conv}}
\renewcommand{\mod}{\mathrel{\;\operatorname{ mod }\;}}
\renewcommand{\div}{\mathrel{\;\operatorname{ div }\;}}

\newcommand{\rAr}[0]{\Rightarrow}

\newcommand{\lrAr}[0]{\Leftrightarrow}

\newtheorem{Lemma}{Lemma}[section]
\newtheorem{Cor}[Lemma]{Corollary}
\newtheorem{Thm}[Lemma]{Theorem}

\theoremstyle{definition}

\newtheorem{fact}{Fact}

\theoremstyle{remark}

\numberwithin{equation}{section}
\title{Staircases in $\ZZ^2$}
\author{
Felix Breuer\thanks{Institut f\"ur Mathematik, Freie Universit\"at Berlin, E-mail: felix.breuer@fu-berlin.de\newline Research supported by the Deutsche Forschungsgemeinschaft within the research training group 'Methods for Discrete Structures' (GRK 1408).}
\and 
Frederik von Heymann\thanks{Faculty Electrical Engineering, Mathematics and Computer Science, Delft University of Technology,
E-mail: F.J.vonHeymann@tudelft.nl\newline During the largest part of this research funded by the DFG Emmy Noether program (HA 4383/1) and the Freie Universit\"at Berlin.}}

\begin{document}
\maketitle


\begin{abstract}
A staircase in this paper is the set of points in $\ZZ^2$ below a given rational line in the plane that have Manhattan Distance less than 1 to the line. Staircases are closely related to Beatty and Sturmian sequences of rational numbers. Connecting the geometry and the number theoretic concepts, we obtain three equivalent characterizations of Sturmian sequences of rational numbers, as well as a new proof of Barvinok's Theorem in dimension two, a recursion formula for Dedekind-Carlitz polynomials and a partially new proof of White's characterization of empty lattice tetrahedra. Our main tool is a recursive description of staircases in the spirit of the Euclidean Algorithm.
\end{abstract}

\section{Introduction}

Motivated by the study of lattice points inside polytopes, in this paper we seek to understand the set of lattice points ``close'' to a rational line in the plane. To this end we define a staircase in the plane to be the set of lattice points in the half-plane below a rational line that have Manhattan Distance less than 1 to the line. We prove several properties of these point sets, most importantly we show that they have a recursive structure that is reminiscent of the Euclidean Algorithm.

Not surprisingly, staircases are closely related to the Beatty and Sturmian sequences defined in number theory (see \cite{S76,FMT78,PS90,Bry02}), i.e.\ to sequences of the form $\left(\floor{\frac{b}{a}n}-\floor{\frac{b}{a}(n-1)}\right)_n$ for $a,b\in\NN$ with $\gcd(a,b)=1$. We show several elementary properties of these sequences from a geometric point of view. To our knowledge such a geometric approach to these sequences is not available in the prior literature. Our observations lead to three characterizations of these sequences (Theorem~\ref{thm:equivalences}). One of these is known (see \cite{GLL78,Fraenkel05}) while the other two seem to be new.

We conclude the paper by giving several applications of our findings. Firstly, we give a new proof of a theorem by Barvinok in dimension 2. Barvinok's Theorem states that the generating function of the lattice points inside a rational simplicial cone can be written as a short rational function. While Barvinok uses a signed decomposition of the cone into unimodular cones to achieve this result, we partition the cone into sets that have a short representation. 

Secondly, these ideas can also be used to give a recursion formula for Dedekind-Carlitz polynomials. These are polynomials of the form $\sum_{k=1}^{a-1}x^{k-1}y^{\floor{\frac{b}{a}k}}$ or, equivalently, generating functions of the lattice points inside the open fundamental parallelepipeds of cones of the form $\cone\left(\twovec{0}{-1},\twovec{a}{b}\right)$. Our recursion formula answers a question from \cite{BeckHaase08}. 

Finally, we simplify ideas from \cite{Scarf85} and \cite{Reznick06} to give a partially new proof of White's Theorem, which characterizes three-dimensional lattice simplices that contain no lattice points except their vertices.

This paper is organized as follows. In Section~\ref{sec:theproblem} we give definitions of and present elementary facts about the objects we study. While Section~\ref{sec:theproblem} is rather concise, we elaborate more in Section~\ref{sec:geometricidea}, where staircases are examined from a geometric point of view. Most importantly we explain the recursive structure of staircases in Lemma~\ref{lem:recursion}, \ref{lem:recursion-parallelepiped} and \ref{lem:recursion-triangle}. In Section~\ref{sec:characterization} we motivate the three characterizations of Sturmian sequences before summarizing them in Theorem~\ref{thm:equivalences}. Section~\ref{sec:theproof} is devoted to the proof of this theorem. In Section~\ref{sec:short-representations} we apply Lemma~\ref{lem:recursion-triangle} to give a new proof of Barvinok's Theorem in dimension 2 and in Section~\ref{sec:dedekind-carlitz} we use Lemma~\ref{lem:recursion-parallelepiped} to give a recursion formula for Dedekind-Carlitz sums. We conclude the paper in Section~\ref{whitethm} by giving a partially new proof of White's Theorem.


\section{Staircases and Related Sequences}\label{sec:theproblem}

In this section we give the basic definitions we are going to work with. In particular we introduce staircases, which are the main geometric objects we will analyze. Then we will define some related sequences of integers and state basic facts about them and their connection to staircases.
We elaborate on the geometric point of view and give additional examples in Section~\ref{sec:geometricidea}.

\vspace{1em}
Before we introduce staircases, here are some preliminaries: For any real number $r\in\RR$ we define the \defn{integral part} $\floor{r}:=\max\set{z\in\ZZ}{z\leq r}$ of $r$. The \defn{fractional part} $\{r\}$ of $r$ is then defined by $r=\floor{r} + \{r\}$. Given $0<a,b\in\NN$ there exist unique integers $(b\div a)$ and $(b\mod a)$ such that $b=(b\div a)\cdot a +(b\mod a)$ and $0\leq b\mod a < a$. Using these two functions we can write $\floor{\frac{b}{a}}=b \div a$ and $\{\frac{b}{a}\}=\frac{b\mod a}{a}$. We are going to use these two notations interchangeably.

\vspace{1em}
Given $A,B\subset\RR^2$ and $v\in\RR^2$, we define $A+B:=\set{a+b}{a\in A,b\in B}$ and ${-A:=\set{-a}{a\in A}}$ and we use the abbreviations $A-B:= A+(-B)$ and ${A+v}={A+\{v\}}$. We will refer to $A+B$ as the \defn{Minkowski sum} of $A$ and $B$. The difference of sets is denoted with $A\setminus B:=\set{a\in A}{a\not\in B}$.
A \defn{lattice point} is an element of $\ZZ^2$. An affine (linear) \defn{lattice transformation} of the plane is an affine (linear) automorphism of the plane that maps $\ZZ^2$ bijectively onto $\ZZ^2$. A vector $v\in\ZZ^d$ is \defn{primitive} if $\gcd(v_1,\ldots,v_d)=1$ or, equivalently, if $\ZZ^d\cap\conv(0,v)=\{0,v\}$. 

\vspace{1em}
Now, what is a staircase? Let $L$ be an oriented rational line in the plane. Then $L$ defines a positive half-space $H$. The task is to describe the lattice points in $H$ that are close to $L$ in the sense that they have distance $< 1$ to the line in the Manhattan metric. Equivalently we consider those points $x\in \ZZ^2\cap H$ from which we can reach a point in the other half-space by a single horizontal or vertical step of unit length. Such a set of points we call a staircase. See Figure~\ref{fig:staircase-example} for two examples. Note that it is sufficient to depict the staircase only under a primitive vector generating the line (in the first example the vector $(3,8)$), as after that (and before that) the same pattern of points is repeated.

We do not require $L$ to pass through the origin (or any other element of $\ZZ^2$), contrary to what Figure~\ref{fig:staircase-example} might suggest. But we will see later that we can get all the information we want by looking only at lines through the origin. Also, without loss of generality we will restrict our attention to lines with positive slope, as negative slopes will give us, up to mirror symmetry, the same sets.

\begin{figure}[ht]
\begin{center}
\input{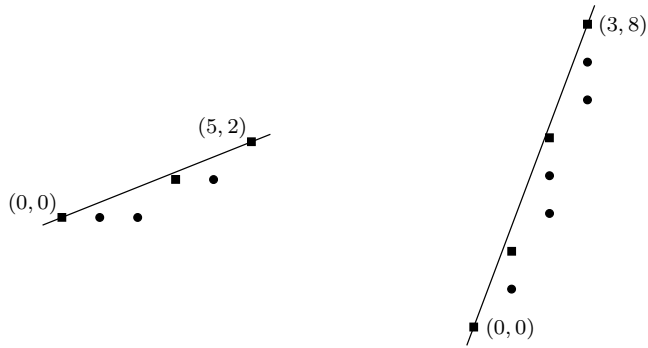}
\caption{\label{fig:staircase-example}A part of the staircases $S_{5,2}$ and $S_{3,8}$. Corners are shown as boxes.}
\end{center}
\end{figure}

Let $0<a,b\in\NN$ with $\gcd(a,b)=1$ and let $r\in\RR$. These parameters define the line $L_{a,b,r}=\set{x\in\RR^2}{x_2=\frac{b}{a}\,x_1+r}$. We denote the closed half-spaces below and above that line by $H^+_{a,b,r}$ and $H^-_{a,b,r}$, respectively. 
Formally we define for any $\sigma\in\{+1,-1\}$ the \defn{half-space} $H^\sigma_{a,b,r}$ as
\begin{eqnarray*}
H^\sigma_{a,b,r}&=&\set{x\in\RR^2}{0 \leq \sigma (\frac{b}{a}\,x_1 - x_2 + r) }.
\end{eqnarray*}
Most of the time we will use $+$ to represent $+1$ and $-$ to represent $-1$ and write $\sigma\in\{+,-\}$ for short. Also if $\sigma=+$ and/or $r=0$ we will omit these parameters and write $H_{a,b}$ for $H^{+1}_{a,b,0}$, and similarly for the symbols introduced below. The case $\sigma=+$ and $r=0$ is of the largest interest to us, as all other cases can be reduced to this one. 

We will give a precise formulation and proof of this in Lemma~\ref{lem:translation}. Although the proof does not require additional tools and could already be given here, we will pursue the connection between certain sequences and our point sets first, and postpone all observations that are purely concerned with the point sets to Section~\ref{sec:geometricidea}. So let's start defining these sets properly, to make it more clear what we are talking about.

\vspace{1em}
The following definitions are illustrated in Figure~\ref{fig:Fact1}. The lattice points in $H^\sigma_{a,b,r}$ that are at distance less than 1 from the line in vertical and horizontal direction, respectively, are
\begin{eqnarray*}
\vpiper{a}{b}{r}{\sigma} &=& 
\ZZ^2\cap H^\sigma_{a,b,r}\setminus (H^\sigma_{a,b,r}-\sigma e_2) \\ &=& 
\set{z\in\ZZ^2}{0 \leq \sigma(\frac{b}{a} z_1 - z_2 + r) < 1} \\
\hpiper{a}{b}{r}{\sigma} &=& 
\ZZ^2\cap H^\sigma_{a,b,r}\setminus (H^\sigma_{a,b,r}+\sigma e_1) \\ &=& 
\set{z\in\ZZ^2}{0 \leq \sigma(\frac{b}{a} z_1 - z_2 + r) < \frac{b}{a}}. 
\end{eqnarray*}

Using this notation we now define the \defn{staircase} $S_{a,b,r}^\sigma$ to be the set of points that are at distance less than~1 from the line in horizontal \emph{or} vertical direction, and we define the \defn{corners} $C_{a,b,r}^\sigma$ to the be the lattice points that are at distance less than~1 in horizontal \emph{and} vertical direction:
\begin{eqnarray*}
S^\sigma_{a,b,r} &=& \vpiper{a}{b}{r}{\sigma}\cup \hpiper{a}{b}{r}{\sigma}, \\
C^\sigma_{a,b,r} &=& \vpiper{a}{b}{r}{\sigma}\cap \hpiper{a}{b}{r}{\sigma}.
\end{eqnarray*}

In other words
\begin{eqnarray*}
S_{a,b}^\sigma & = & \set{z\in\ZZ^2}{z\in H_{a,b}^\sigma\text{ but }z-\sigma e_1\not\in H_{a,b}^\sigma\text{ or }z+\sigma e_2\not\in H_{a,b}^\sigma} \\
C_{a,b}^\sigma & = & \set{z\in\ZZ^2}{z\in H_{a,b}^\sigma\text{ but }z-\sigma e_1\not\in H_{a,b}^\sigma\text{ and }z+\sigma e_2\not\in H_{a,b}^\sigma}.
\end{eqnarray*}

See Figure~\ref{fig:staircase-example} and also Figure~\ref{fig:Fact1}. Clearly $C_{a,b}\subset S_{a,b}$. For any set $A\subset\ZZ^2$ and any $x\in\ZZ$ we call the set $\col_x(A)=\{(x,y)\in A\}$ a \defn{column} of $A$ and for $y\in\ZZ$ we call the set $\row_y(A)={\{(x,y)\in A\}}$ a \defn{row} of $A$. For any $0<a,b\in\NN$, every row and every column of $S_{a,b}$ contains at least one point and every row and every column of $C_{a,b}$ contains at most one point. 

The sequence $(|\col_x(S_{a,b})|)_{x\in\ZZ}$ is called the \defn{column sequence} of $S_{a,b}$, the sequence $(|\row_y(C_{a,b})|)_{y\in\ZZ}$ is called the \defn{row sequence} of $C_{a,b}$ and so on.

\vspace{1em}
In the following we summarize some basic facts about staircases. We omit the proofs as they are easy enough to do and would slow the pace of this section without giving the reader further insights. The reader may find it instructive, however, to check the validity of these facts by looking at examples such as those given in the figures of this section.

\begin{fact}\label{fact:pipes}
For all $0<a,b\in\NN$ and $\sigma\in\{+,-\}$
\begin{eqnarray*}
a \geq b \quad \lrAr & \hpipe{a}{b}{\sigma}\subset \vpipe{a}{b}{\sigma}  & \lrAr \quad \forall x: |\col_x(S_{a,b}^\sigma)| = 1 \quad \lrAr \quad \forall y: |\row_y(C_{a,b}^\sigma)| = 1,\\
 a \leq b \quad \lrAr & \hpipe{a}{b}{\sigma}\supset \vpipe{a}{b}{\sigma}  & \lrAr \quad \forall y: |\row_y(S_{a,b}^\sigma)| = 1 \quad \lrAr \quad \forall x: |\col_x(C_{a,b}^\sigma)| = 1.
\end{eqnarray*}
\end{fact}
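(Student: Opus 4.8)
The plan is to observe that both lines of the Fact are the same statement with the roles of rows and columns, of $\vpipe{a}{b}{\sigma}$ and $\hpipe{a}{b}{\sigma}$, and of $a$ and $b$ interchanged, so I would prove the first line in full and then obtain the second by this symmetry. The backbone of the argument is that both pipes are level sets of the \emph{single} quantity $f(z)=\sigma(\tfrac{b}{a}z_1-z_2)$ that already appears in the set-builder definitions: by construction $\vpipe{a}{b}{\sigma}=f^{-1}\big([0,1)\big)$ and $\hpipe{a}{b}{\sigma}=f^{-1}\big([0,\tfrac{b}{a})\big)$. Thus the two pipes differ only in the upper endpoint of the defining interval, and all of the combinatorics will be read off from comparing the numbers $1$ and $\tfrac{b}{a}$.

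For the first equivalence $a\geq b\ \lrAr\ \hpipe{a}{b}{\sigma}\subset\vpipe{a}{b}{\sigma}$, the forward direction is immediate: if $a\geq b$ then $\tfrac{b}{a}\leq 1$, so $[0,\tfrac{b}{a})\subseteq[0,1)$ and hence $f^{-1}\big([0,\tfrac{b}{a})\big)\subseteq f^{-1}\big([0,1)\big)$. For the converse I would argue contrapositively: if $a<b$ I must exhibit a lattice point lying in $\hpipe{a}{b}{\sigma}\setminus\vpipe{a}{b}{\sigma}$, i.e.\ a $z$ with $f(z)=1$ (which lies in $[0,\tfrac{b}{a})$ precisely because $1<\tfrac{b}{a}$, but not in $[0,1)$). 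Such a $z$ solves $\sigma(bz_1-az_2)=a$, which is solvable over $\ZZ$ since $\gcd(a,b)=1$. This single step — producing the witness from primitivity of $(a,b)$ — is the only place where a genuine, if small, number-theoretic input enters; I expect it to be the main (and only real) obstacle, everything else being bookkeeping.

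Next I would record the structural fact that drives the counting equivalences: for every $x$ the column $\col_x(\vpipe{a}{b}{\sigma})$ is a single point, and for every $y$ the row $\row_y(\hpipe{a}{b}{\sigma})$ is a single point. Indeed, fixing $z_1=x$ in the inequality $0\leq\sigma(\tfrac{b}{a}x-z_2)<1$ pins $z_2$ to a unique integer ($\floor{\tfrac{b}{a}x}$ when $\sigma=+$, $\ceil{\tfrac{b}{a}x}$ when $\sigma=-$); dividing the defining inequality of $\hpipe{a}{b}{\sigma}$ by $\tfrac{b}{a}>0$ and fixing $z_2=y$ gives the symmetric statement. (This sharpens the ``at least one / at most one per row and column'' remark preceding the Fact to ``exactly one'' for the appropriate pipe.)

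Finally I would deduce the remaining two equivalences from this lemma. Since $S_{a,b}^\sigma=\vpipe{a}{b}{\sigma}\cup\hpipe{a}{b}{\sigma}$ and each $\col_x(\vpipe{a}{b}{\sigma})$ is a single point, we have $|\col_x(S_{a,b}^\sigma)|=1$ iff $\col_x(\hpipe{a}{b}{\sigma})\subseteq\col_x(\vpipe{a}{b}{\sigma})$, and ranging over all $x$ this says exactly $\hpipe{a}{b}{\sigma}\subseteq\vpipe{a}{b}{\sigma}$. For the corners, $C_{a,b}^\sigma=\vpipe{a}{b}{\sigma}\cap\hpipe{a}{b}{\sigma}$ and each $\row_y(\hpipe{a}{b}{\sigma})$ is a single point, so $\row_y(C_{a,b}^\sigma)=\row_y(\vpipe{a}{b}{\sigma})\cap\row_y(\hpipe{a}{b}{\sigma})$ contains at most one point, and it contains exactly one iff the unique point of $\row_y(\hpipe{a}{b}{\sigma})$ lies in $\vpipe{a}{b}{\sigma}$; ranging over all $y$ this again says $\hpipe{a}{b}{\sigma}\subseteq\vpipe{a}{b}{\sigma}$. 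Chaining these gives the entire first line, and repeating verbatim with rows and columns (and $\vpipe{a}{b}{\sigma}$, $\hpipe{a}{b}{\sigma}$) swapped yields the second line, where $\vpipe{a}{b}{\sigma}\subseteq\hpipe{a}{b}{\sigma}$ plays the role of the containment and $a\leq b\ \lrAr\ \tfrac{b}{a}\geq 1$ closes the loop.
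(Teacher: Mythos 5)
Your proposal is correct, and there is nothing in the paper to compare it against: the paper deliberately omits the proofs of the Facts in Section~2 (``We omit the proofs as they are easy enough to do\dots''), so your argument fills a gap rather than paralleling or diverging from an existing one. The structure is sound: viewing both pipes as preimages $f^{-1}\bigl([0,1)\bigr)$ and $f^{-1}\bigl([0,\tfrac{b}{a})\bigr)$ of the single functional $f(z)=\sigma(\tfrac{b}{a}z_1-z_2)$, establishing that every column of $\vpipe{a}{b}{\sigma}$ and every row of $\hpipe{a}{b}{\sigma}$ is a singleton, and then reading off all counting statements from the containment $\hpipe{a}{b}{\sigma}\subseteq\vpipe{a}{b}{\sigma}$ (resp.\ its reverse) is exactly the kind of bookkeeping the authors had in mind, and the row/column symmetry disposes of the second line. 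One small simplification: you do not need the general solvability argument from $\gcd(a,b)=1$ for the converse of the first equivalence, since the point $z=(0,-\sigma)$ always satisfies $f(z)=1$ (and $z=(\sigma,0)$ satisfies $f(z)=\tfrac{b}{a}$ for the second line), so explicit witnesses are available for free; this also shows the Fact holds without any coprimality hypothesis.
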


In the former case we call $S_{a,b}^\sigma$ \defn{flat} and in the latter case we call $S_{a,b}^\sigma$ \defn{steep}, see Figure~\ref{fig:Fact1}. Note that this implies $S_{a,b}^\sigma = \vpipe{a}{b}{\sigma}$ for flat and $S_{a,b}^\sigma = \hpipe{a}{b}{\sigma}$ for steep staircases. 

\begin{figure}[ht]
\begin{center}
 \input{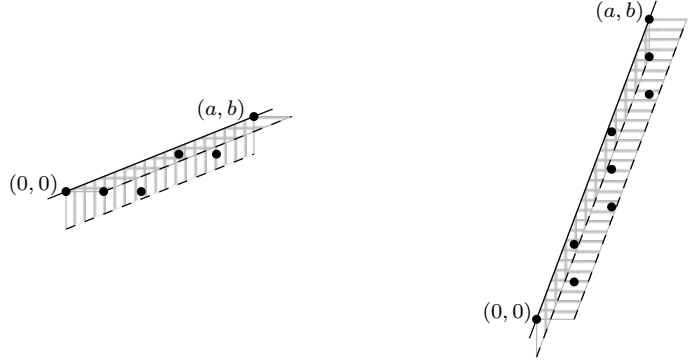}
\caption{\label{fig:Fact1}This figure shows the flat staircase $S_{5,2}$ and the steep staircase $S_{3,8}$ from Figure~\ref{fig:staircase-example} together with parts of the corresponding sets $\hpipe{a}{b}{}$ and $\vpipe{a}{b}{}$. This illustrates Fact~\ref{fact:pipes}.}
\end{center}
\end{figure}

\begin{fact}\label{fact:toppoint}
For all $n\in\ZZ$ the topmost point of $\col_n(S_{a,b})$ is $(n,\floor{\frac{b}{a}n})$.
\end{fact}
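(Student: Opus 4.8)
The plan is to read the result directly off the inequality descriptions of the two pipes, specialized to the case $\sigma=+$, $r=0$ that defines $S_{a,b}$. There the membership conditions become $0\le \tfrac{b}{a}z_1-z_2<1$ for $\vpipe{a}{b}{}$ and $0\le \tfrac{b}{a}z_1-z_2<\tfrac{b}{a}$ for $\hpipe{a}{b}{}$. The key observation is that both pipes carry the \emph{same} upper bound on $z_2$: the common left inequality $0\le \tfrac{b}{a}z_1-z_2$ says exactly $z_2\le \tfrac{b}{a}z_1$. Since $S_{a,b}=\vpipe{a}{b}{}\cup\hpipe{a}{b}{}$, every point of the staircase inherits this bound.

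First I would fix $n\in\ZZ$ and intersect with the column $z_1=n$. Any $(n,y)\in S_{a,b}$ then satisfies $y\le \tfrac{b}{a}n$, and because $y\in\ZZ$ this forces $y\le\floor{\tfrac{b}{a}n}$; so no point of $\col_n(S_{a,b})$ sits strictly above height $\floor{\tfrac{b}{a}n}$. It remains to check that the candidate $(n,\floor{\tfrac{b}{a}n})$ is itself a point of the staircase, and for this it suffices to place it in the vertical pipe. Setting $y_0=\floor{\tfrac{b}{a}n}$, the upper inequality $y_0\le\tfrac{b}{a}n$ is the definition of the floor, while the lower inequality $\tfrac{b}{a}n-y_0<1$ is just the statement $\{\tfrac{b}{a}n\}<1$, which always holds. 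Hence $(n,y_0)\in\vpipe{a}{b}{}\subseteq S_{a,b}$, and together with the upper estimate this exhibits it as the topmost point of its column.

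I expect no genuine obstacle: the whole argument is the remark that the two pipes agree on the bound $z_2\le\tfrac{b}{a}z_1$, after which the floor function supplies both the maximal admissible integer height and a witness attaining it. The only subtlety worth stating explicitly is that the maximizer is not excluded by the strict lower bound of the vertical pipe, which is immediate from $0\le\{\tfrac{b}{a}n\}<1$; note in particular that this argument never uses $\gcd(a,b)=1$, so the fact in fact holds for arbitrary positive $a,b$.
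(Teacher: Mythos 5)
Your proof is correct and complete: the common inequality $0\le \frac{b}{a}z_1-z_2$ shared by both pipes gives the upper bound $z_2\le\floor{\frac{b}{a}z_1}$ on integer points of the column, and the floor-function witness $(n,\floor{\frac{b}{a}n})$ lies in $\vpipe{a}{b}{}$ since $0\le\{\frac{b}{a}n\}<1$. The paper deliberately omits proofs of its numbered Facts as routine, and your argument is exactly the straightforward verification the authors have in mind (your side remark that $\gcd(a,b)=1$ is not needed here is also accurate).
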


If $S_{a,b}$ is flat, we have seen in Fact~\ref{fact:pipes} that for every $n\in\ZZ$ the set $\col_n(S_{a,b})$ contains exactly one element, so all elements of $S_{a,b}$ have the form $(n,\floor{\frac{b}{a}n})$. If $S_{a,b}$ is steep, this is only true for the corners.

\vspace{1em}
This description allows us to compute the difference in height between the topmost points in consecutive columns of $S_{a,b}$. For all $0<a,b\in\NN$ we define the sequence $B_{a,b}=(B_{a,b}(n))_{n\in\ZZ}$ by
\begin{eqnarray}
\label{eqn:Beatty-1}\nonumber
B_{a,b}(n) & := & \floor{\frac{b}{a}n}-\floor{\frac{b}{a}(n-1)} \\ & = &
\frac{b}{a}+\left\{\frac{b}{a}(n-1)\right\}-\left\{\frac{b}{a}n\right\}.
\label{eqn:Beatty-2}
\end{eqnarray}

\begin{fact}\label{fact:interpretation}
 If $a\leq b$ (i.e.\ $S_{a,b}$ is steep) then
\[
 |\col_n(S_{a,b})|=B_{a,b}(n)
\]
and if $b\leq a$ (i.e.\ $S_{a,b}$ is flat) then 
\[
 |\col_n(C_{a,b})|=B_{a,b}(n),
\]
in particular $B_{a,b}$ is a $0,1$-sequence in this case.
\end{fact}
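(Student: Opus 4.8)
The plan is to reduce both cases to a single computation, namely counting the lattice points in one column of $\hpipe{a}{b}{}$. First I would use Fact~\ref{fact:pipes} to identify the relevant point set with $\hpipe{a}{b}{}$ in each case. In the steep case $a\leq b$ one has $\vpipe{a}{b}{}\subset\hpipe{a}{b}{}$, so $S_{a,b}=\vpipe{a}{b}{}\cup\hpipe{a}{b}{}=\hpipe{a}{b}{}$; in the flat case $b\leq a$ one has $\hpipe{a}{b}{}\subset\vpipe{a}{b}{}$, so $C_{a,b}=\vpipe{a}{b}{}\cap\hpipe{a}{b}{}=\hpipe{a}{b}{}$. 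Hence in both situations the assertion reduces to proving $|\col_n(\hpipe{a}{b}{})|=B_{a,b}(n)$.

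For that single computation I would simply unwind the definition. A point $(n,y)$ lies in $\col_n(\hpipe{a}{b}{})$ exactly when $0\leq\frac{b}{a}n-y<\frac{b}{a}$, that is, when the integer $y$ lies in the half-open interval $\left(\frac{b}{a}(n-1),\frac{b}{a}n\right]$. The one elementary input I would invoke is that the number of integers in a half-open interval $(\alpha,\beta]$ with $\alpha\leq\beta$ equals $\floor{\beta}-\floor{\alpha}$. Applying this with $\alpha=\frac{b}{a}(n-1)$ and $\beta=\frac{b}{a}n$ gives $|\col_n(\hpipe{a}{b}{})|=\floor{\frac{b}{a}n}-\floor{\frac{b}{a}(n-1)}$, which is precisely the definition of $B_{a,b}(n)$.

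Finally, for the $0,1$-sequence claim in the flat case, I would observe that $b\leq a$ forces $\frac{b}{a}\leq 1$, so the interval $\left(\frac{b}{a}(n-1),\frac{b}{a}n\right]$ has length at most $1$ and therefore contains at most one integer, giving $B_{a,b}(n)\in\{0,1\}$; this also matches the earlier remark that every column of $C_{a,b}$ contains at most one point. I do not expect a genuine obstacle here: once Fact~\ref{fact:pipes} pins down the relevant set as $\hpipe{a}{b}{}$, everything is a direct unwinding of definitions. The only step needing care is the integer count, where one must use the correctly half-open interval so that the upper endpoint $\frac{b}{a}n$ (corresponding to the topmost point $(n,\floor{\frac{b}{a}n})$ of Fact~\ref{fact:toppoint}) is included while $\frac{b}{a}(n-1)$ is excluded; the formula $\floor{\beta}-\floor{\alpha}$ then handles automatically the borderline case in which $\frac{b}{a}(n-1)$ happens to be an integer.
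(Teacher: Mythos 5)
Your proof is correct, and since the paper deliberately omits the proofs of the Facts in Section~\ref{sec:theproblem}, your argument fills the gap in what is evidently the intended way: Fact~\ref{fact:pipes} identifies the relevant set as $\hpipe{a}{b}{}$ in both cases ($S_{a,b}=\hpipe{a}{b}{}$ when steep, $C_{a,b}=\hpipe{a}{b}{}$ when flat), after which $|\col_n(\hpipe{a}{b}{})|$ is the number of integers in the half-open interval $\left(\frac{b}{a}(n-1),\frac{b}{a}n\right]$, namely $\floor{\frac{b}{a}n}-\floor{\frac{b}{a}(n-1)}=B_{a,b}(n)$. The unification of the two cases into a single count, and the observation that $\frac{b}{a}\leq 1$ forces $B_{a,b}(n)\in\{0,1\}$ in the flat case, are both sound and consistent with the paper's surrounding remarks (Fact~\ref{fact:toppoint} and Lemma~\ref{Lem:beatty-is-balanced}).
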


The sequence $B_{a,b}$ is the key to connect the geometric description of ``points close to a line'' with notions from number-theory. 

\vspace{1em}
The sequences $(\floor{\frac{b}{a}n})_{n\in\NN}$, $(B_{a,b}(n))_{n\in\NN}$ and $(\red{B}_{a,b}(n))_{n\in\NN}$ (see below) are known as the \defn{characteristic sequence}, the \defn{Beatty sequence} and the \defn{Sturmian sequence} of $\frac{b}{a}$, respectively. 

\vspace{1em}
These sequences are well studied in number theory, see~\cite{S76,FMT78,PS90,Bry02} for surveys, \cite{B93} for historical remarks and~\cite{Fraenkel05} for a discussion about the names of the sequences. However, only the characteristic sequences of irrational numbers are non-trivial from the point of view of number theory. They also appear in geometry, see Section~\ref{whitethm}. 

\vspace{1em}
For the rest of the section we will establish some definitions connected with the above sequences, and some basic properties of $B_{a,b}$, relating them to the staircases.

\vspace{1em}
In this spirit, instead of working with $(\floor{\frac{b}{a}n})_{n\in\NN}$, $(B_{a,b}(n))_{n\in\NN}$ and $(\red{B}_{a,b}(n))_{n\in\NN}$, we will deal with $(\floor{\frac{b}{a}n})_{n\in\ZZ}$, $(B_{a,b}(n))_{n\in\ZZ}$ and $(\red{B}_{a,b}(n))_{n\in\ZZ}$, respectively. This is due to the fact that we look at staircases of lines, not of rays.

\vspace{1em}
Now let's define what a Sturmian sequence is. A sequence $s=(s_n)_{n\in\ZZ}$ of integers $s_n\in\ZZ$ is called \defn{balanced} (at $k$) if $s_n\in\{k,k+1\}$ for all $n\in\ZZ$. If $s$ is balanced at $k$, we can define a $0,1$-sequence $\red{s}=(\red{s}_n)_{n\in\ZZ}$, which we call the \defn{reduced} sequence, by 
\[
\red{s}_n=s_n-k.
\]
 Note that if $s=(c)_{n\in\ZZ}$ is constant, $s$ is balanced at both $c$ and $c-1$. In this case $\red{s}$ is defined with respect to $c$, i.e.\ $\red{s}$ is constant 0.

\begin{Lemma}
\label{Lem:beatty-is-balanced}
$B_{a,b}$ is balanced at $\floor{\frac{b}{a}}$. If $\frac{b}{a}\in\ZZ$, then $B_{a,b}(n)=\frac{b}{a}$ for all $n\in\ZZ$.
\end{Lemma}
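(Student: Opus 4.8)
The plan is to extract everything from the second, fractional-part form of $B_{a,b}$ given in~\eqref{eqn:Beatty-2}, namely
\[
B_{a,b}(n) = \frac{b}{a} + \left\{\frac{b}{a}(n-1)\right\} - \left\{\frac{b}{a}n\right\}.
\]
Since each fractional part lies in $[0,1)$, their difference lies in the open interval $(-1,1)$, and hence $B_{a,b}(n)$ lies in $\left(\frac{b}{a}-1,\frac{b}{a}+1\right)$ for every $n\in\ZZ$. On the other hand, the first form in the definition exhibits $B_{a,b}(n)=\floor{\frac{b}{a}n}-\floor{\frac{b}{a}(n-1)}$ as a difference of two integers, so $B_{a,b}(n)\in\ZZ$. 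The whole argument then reduces to determining which integers lie in an open interval of length $2$.

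First I would treat the generic case $\frac{b}{a}\notin\ZZ$. Here $\floor{\frac{b}{a}}<\frac{b}{a}$ strictly, and both $\floor{\frac{b}{a}}$ and $\floor{\frac{b}{a}}+1$ satisfy the two strict inequalities defining the interval: indeed $\frac{b}{a}-1<\floor{\frac{b}{a}}<\frac{b}{a}<\frac{b}{a}+1$, while $\floor{\frac{b}{a}}+1>\frac{b}{a}>\frac{b}{a}-1$ and $\floor{\frac{b}{a}}+1<\frac{b}{a}+1$ precisely because $\floor{\frac{b}{a}}<\frac{b}{a}$. No other integer fits, since the interval is open and has length $2$. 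Therefore $B_{a,b}(n)\in\{\floor{\frac{b}{a}},\floor{\frac{b}{a}}+1\}$, which is exactly the assertion that $B_{a,b}$ is balanced at $\floor{\frac{b}{a}}$.

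Finally, if $\frac{b}{a}\in\ZZ$, then $\frac{b}{a}n\in\ZZ$ for all $n$, so every fractional part in the formula vanishes and $B_{a,b}(n)=\frac{b}{a}$ for all $n$, which is the second claim; since $\floor{\frac{b}{a}}=\frac{b}{a}$, the constant value lies in $\{\floor{\frac{b}{a}},\floor{\frac{b}{a}}+1\}$, so the sequence is balanced at $\floor{\frac{b}{a}}$ in this case too, consistent with the earlier remark that a constant sequence is balanced at both $c$ and $c-1$. I do not expect a genuine obstacle; the only point requiring care is the case distinction on whether $\frac{b}{a}$ is an integer, because in the integer case the interval $\left(\frac{b}{a}-1,\frac{b}{a}+1\right)$ contains the single integer $\frac{b}{a}=\floor{\frac{b}{a}}$ rather than two, so balancedness must be read off from the constant value directly rather than from counting integers in the interval.
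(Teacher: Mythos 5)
Your proof is correct and follows essentially the same route as the paper's: both use the fractional-part form~(\ref{eqn:Beatty-2}) to get $|B_{a,b}(n)-\frac{b}{a}|<1$, combine this with integrality of $B_{a,b}(n)$ to conclude balancedness at $\floor{\frac{b}{a}}$, and observe that the fractional parts vanish when $\frac{b}{a}\in\ZZ$. Your write-up merely spells out the case distinction (integer versus non-integer $\frac{b}{a}$) that the paper leaves implicit.
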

\begin{proof}
By~(\ref{eqn:Beatty-2}) we know that $|B_{a,b}(n)-\frac{b}{a}|<1$ and by definition $B_{a,b}\in\ZZ$. If $\frac{b}{a}\in\ZZ$, then the fractional parts in~(\ref{eqn:Beatty-2}) are both $0$, and thus the second statement is also true.
\end{proof}

So Sturmian sequences $(\red{B}_{a,b}(n))_{n\in\NN}$ are well-defined.
Furthermore, we now know that only two different integers appear in $B_{a,b}$, and that $\red{B}_{a,b}$ tells us in which positions the larger integer of the two appears.

Given our geometric interpretation of $B_{a,b}$ from Fact~\ref{fact:interpretation} this means that a steep staircase has columns of only two different lengths and the reduced sequence $(\red{B}_{a,b}(n))_{n\in\NN}$ encodes which columns are long and which columns are short. We will return to the concept of reduction in Section~\ref{sec:geometricidea}.

\vspace{1em}
To make the connection between the sequence $B_{a,b}$ and the point set $S_{a,b}$ more transparent, we introduce some more notation.

\vspace{1em}
We will transfer the idea that all we want to know about the staircase can be found under a primitive integer vector in the line $L_{a,b}$ into the language of sequences.

For any sequence $s=(s_n)_{n\in\ZZ}$ we say that $s$ is \defn{periodic} with period $a\in\NN$ if $(s_{n+a})_{n\in\ZZ}=(s_n)_{n\in\ZZ}$. We say that $a$ is the \defn{minimal period} of $s$ if there is no period $a'\in\NN$ of $s$ with  $a'<a$ and write $\PP(s)$ for the minimal period of $s$. By (\ref{eqn:Beatty-2}), if $\gcd(a,b)=1$, then $B_{a,b}$ is periodic with minimal period $a$.

For a periodic sequence $s$ we define $\period(s)=(s_n)_{0\leq n < \PP(s)}$. If $s$ is a periodic $0,1$-sequence, we write $\ones(s)$ for the number of ones in $\period(s)$. We will frequently represent $s$ by the $\PP(s)$-tuple $\period(s)$. 

As $B_{a,b}$ describes the differences of the maximal heights in adjacent columns of $S_{a,b}$, these differences, accumulated between $0$ and $a-1$, must sum up to $b$. 

\vspace{1em}
We summarize the above observations into
\begin{fact}\label{fact:Beatty-period}
If $0<a,b\in\NN$ and $\gcd(a,b)=1$, then 
\[
 \PP(B_{a,b})=a\quad\text{ and }\sum_{0\leq n < a}B_{a,b}(n)=b.
\]
In particular if $a>b$ (and thus $S_{a,b}$ is flat), then $\ones(B_{a,b})=b$.
\end{fact}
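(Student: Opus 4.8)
The plan is to prove the three assertions in turn, relying throughout on the representation~(\ref{eqn:Beatty-2}), which shows that $B_{a,b}(n)$ depends on $n$ only through the fractional parts $\left\{\frac{b}{a}n\right\}=\frac{b n \mod a}{a}$, and hence only through the residue $b n \mod a$. The minimal-period claim is the part requiring real work; the sum is a quick telescoping computation, and the $0,1$-reduction follows directly from Lemma~\ref{Lem:beatty-is-balanced}.

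I would first establish $\PP(B_{a,b})=a$. Periodicity with period $a$ is immediate from~(\ref{eqn:Beatty-2}), since replacing $n$ by $n+a$ leaves both fractional parts unchanged because $b(n+a)\equiv bn \pmod{a}$. The main obstacle is minimality, for which I would show that \emph{every} period $\PP$ satisfies $a\mid\PP$. Telescoping the first form of $B_{a,b}$ in~(\ref{eqn:Beatty-1}), periodicity forces $\floor{\frac{b}{a}(n+\PP)}-\floor{\frac{b}{a}n}$ to be a constant independent of $n$. Writing $t=bn\mod a$ and $s=b\PP\mod a$ and expanding the floors, this constancy reduces to $\floor{\frac{t+s}{a}}$ being independent of $t$. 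Since $\gcd(a,b)=1$, the residue $t=bn\mod a$ runs through all of $\{0,\dots,a-1\}$ as $n$ varies, and $\floor{\frac{t+s}{a}}$ is constant over this range only when $s=0$: indeed, for $1\le s\le a-1$ it equals $0$ at $t=0$ and $1$ at $t=a-1$. Hence $a\mid b\PP$, and coprimality gives $a\mid\PP$, so $\PP\ge a$; as $a$ is itself a period, $\PP(B_{a,b})=a$.

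Next I would compute the sum by telescoping over the window $1\le k\le a$, which collapses cleanly:
\[
\sum_{k=1}^{a}B_{a,b}(k)=\floor{\tfrac{b}{a}\cdot a}-\floor{\tfrac{b}{a}\cdot 0}=b-0=b.
\]
By the periodicity just established, $B_{a,b}(a)=B_{a,b}(0)$, so shifting the window gives $\sum_{0\le n<a}B_{a,b}(n)=b$ as well.

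Finally, for the $0,1$-claim, suppose $a>b$, so that $\frac{b}{a}<1$ and $\floor{\frac{b}{a}}=0$. By Lemma~\ref{Lem:beatty-is-balanced}, $B_{a,b}$ is then balanced at $0$, i.e.\ a $0,1$-sequence. Consequently the entries equal to $1$ in one period are exactly the nonzero entries, so $\ones(B_{a,b})=\sum_{0\le n<a}B_{a,b}(n)=b$ by the sum already computed. The one point to keep straight is that the minimal-period argument and the telescoping sum are logically independent, so neither conclusion is used in proving the other.
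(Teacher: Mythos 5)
Your proof is correct. Note that the paper gives no formal proof of Fact~\ref{fact:Beatty-period}: it asserts the minimal-period claim directly ``by~(\ref{eqn:Beatty-2})'' and justifies the sum geometrically, as the accumulated differences of column heights of $S_{a,b}$ between $0$ and $a-1$, which must add up to $b$. Your telescoping computation $\sum_{k=1}^{a}B_{a,b}(k)=\floor{\frac{b}{a}a}-\floor{0}=b$ is exactly the algebraic form of that accumulation argument, and your reduction of the $0,1$-claim to Lemma~\ref{Lem:beatty-is-balanced} plus the sum is the intended route. The substantive content you add is the minimality argument, which the paper leaves implicit: showing that every period $\PP$ satisfies $a\mid\PP$, via the constancy of $\floor{\frac{t+s}{a}}$ over all residues $t$ (forcing $s=b\PP \mod a$ to vanish) and then coprimality. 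This is correct, including the essential point that $t=bn \mod a$ really does sweep all of $\{0,\dots,a-1\}$ precisely because $\gcd(a,b)=1$ --- without coprimality the minimality claim is false, so this is where the hypothesis enters. A marginally shorter variant of the same step: by~(\ref{eqn:Beatty-2}), periodicity with period $\PP$ forces $\left\{\frac{b}{a}(n+\PP)\right\}-\left\{\frac{b}{a}n\right\}$ to be constant in $n$; summing this constant over $n=0,\dots,a-1$ gives $0$ (both families of fractional parts run over the same multiset), so $a\mid b\PP$ and coprimality finishes as in your argument.
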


To be more flexible when talking about parts of staircases respectively Beatty-sequences, we define the following.
Given a sequence $s=(s_n)_{n\in\ZZ}$, a finite subsequence of the form 
\[
\interval{s}{x_0}{x_1} :=(s_n)_{x_0\leq n \leq x_1}\text{ for some }x_0\leq x_1\in\ZZ
\]
will be called an \defn{interval}. The number of elements $x_1-x_0+1$ of $\interval{s}{x_0}{x_1}$ we will call the length of the interval and we will denote it by $\txtlength(\interval{s}{x_0}{x_1})$. If $s$ is a $0,1$\mbox{-}sequence, we will denote the number of ones in an interval $\interval{s}{x_0}{x_1}$ by $\txtones(\interval{s}{x_0}{x_1})$. 

\vspace{1em}
In Fact~\ref{fact:Beatty-period} we summed over the interval $\interval{B_{a,b}}{0}{a-1}$. But because of the periodicity of $S_{a,b}$ and $B_{a,b}$ we see that we could have used any interval of length $a-1$. So for any fixed $i\in\ZZ$ the sequence $B_{a,b}(n+i))_{n\in\ZZ}$ also describes $S_{a,b}$. This gives rise to the following definition:

We say that sequences $s=(s_n)_{n\in\ZZ}$ and $s'=(s'_n)_{n\in\ZZ}$ are identical \defn{up to shift} if there exists an $i\in\ZZ$ with $(s_{n+i})_{n\in\ZZ}=(s'_n)_{n\in\ZZ}$, in symbols $s\equiv s'$. Our goal in Section~\ref{sec:characterization} will be to characterize Sturmian sequences up to shift.


\section{Geometric Observations}\label{sec:geometricidea}

In this section we develop some properties of staircases and their related sequences from a geometric point of view. The most important operation on staircases is for us the reduction, which we turn to in the latter half of this section. We start with some more elementary operations.

\vspace{1em}
Throughout this section let  $0<a,b\in\NN$ such that $\gcd(a,b)=1$ and let $\sigma\in\{+,-\}$.

\paragraph{Elementary Properties of Staircases.} As we have already mentioned (and used) before, all staircases with a given slope, regardless whether it's the one above or below the line, are translates of each other. Hence they yield the same step sequence up to shift. Before we finally prove this, we state an elementary lemma.

\begin{Lemma}
\label{lem:dist-to-line}
Let $r\in\RR$.
The line $L_{a,b,r}$ contains a lattice point if and only if $r=\frac{k}{a}$ for some $k\in\ZZ$.
\end{Lemma}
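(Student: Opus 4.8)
The plan is to characterize exactly when the line $L_{a,b,r}=\set{x\in\RR^2}{x_2=\frac{b}{a}x_1+r}$ passes through a point of $\ZZ^2$, and to exploit the hypothesis $\gcd(a,b)=1$ to pin down the possible values of $r$. A lattice point $(x_1,x_2)\in\ZZ^2$ lies on $L_{a,b,r}$ if and only if $x_2=\frac{b}{a}x_1+r$, which we rewrite as $r=x_2-\frac{b}{a}x_1=\frac{a x_2 - b x_1}{a}$. So the line contains a lattice point precisely when $r$ can be written in the form $\frac{a x_2 - b x_1}{a}$ for some integers $x_1,x_2$.

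First I would prove the easy direction ($\Leftarrow$): if $r=\frac{k}{a}$ for some $k\in\ZZ$, then because $\gcd(a,b)=1$ there exist integers $x_1,x_2$ with $a x_2 - b x_1 = k$ (B\'ezout's identity gives $u,v$ with $av-bu=1$, and scaling by $k$ yields $x_2=kv$, $x_1=ku$). Then $(x_1,x_2)\in\ZZ^2$ satisfies $a x_2 - b x_1 = k$, hence $r=\frac{ax_2-bx_1}{a}$, so $(x_1,x_2)\in L_{a,b,r}$.

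For the converse ($\Rightarrow$), suppose $(x_1,x_2)\in\ZZ^2\cap L_{a,b,r}$. Then $r=\frac{ax_2-bx_1}{a}$, and setting $k:=ax_2-bx_1\in\ZZ$ gives $r=\frac{k}{a}$ immediately. This direction is essentially automatic and requires no coprimality.

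The coprimality hypothesis is therefore only needed for the backward direction, to guarantee that every residue $k$ is actually hit by the integer combination $ax_2-bx_1$; since $\gcd(a,b)=1$, the set $\set{ax_2-bx_1}{x_1,x_2\in\ZZ}$ is all of $\ZZ$. The only mild subtlety — hardly an obstacle — is to state the equivalence cleanly as ``$r=\frac{k}{a}$ for \emph{some} $k\in\ZZ$'' rather than fixing $k$, since a given $r$ of this form admits many representatives $(x_1,x_2)$ differing by multiples of the primitive direction vector $\twovec{a}{b}$. I expect the whole argument to be three or four lines and to hinge entirely on B\'ezout; no geometric machinery from the earlier sections is needed.
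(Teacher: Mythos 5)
Your proof is correct, and it takes a genuinely different route from the paper for the substantive half of the lemma. The necessity direction is the same in both: any lattice point $(x_1,x_2)$ on the line forces $r=\frac{ax_2-bx_1}{a}\in\frac{1}{a}\ZZ$, with no coprimality needed. For sufficiency, you carry out the B\'ezout argument in full: since $\gcd(a,b)=1$, the set $\set{ax_2-bx_1}{x_1,x_2\in\ZZ}$ is all of $\ZZ$, so every $r=\frac{k}{a}$ is realized by an explicit lattice point $(ku,kv)$ with $av-bu=1$. The paper acknowledges this route in a single sentence (``follows directly from the extended Euclidean Algorithm'') but deliberately does not elaborate it; instead it gives a geometric pigeonhole argument: by coprimality each line $L_{a,b,-\frac{k}{a}}$ with $0\leq k\leq a-1$ contains \emph{at most} one lattice point $z$ with $0\leq z_1<a$, while $\vpipe{a}{b}{}\cap\bigl([0,a)\times\RR\bigr)$ contains exactly $a$ lattice points (one per column) and only those $a$ lines can meet $\vpipe{a}{b}{}$, so each line must contain exactly one. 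Your version is shorter, self-contained, and purely number-theoretic; the paper's version stays inside its geometric framework, reusing the column structure of staircases and rehearsing the counting-by-columns style of argument that recurs throughout the rest of the paper. Either proof is complete.
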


\begin{proof}
Without loss of generality we can assume $-1<r\leq 0$.
For any point $z\in\ZZ^2$ the vertical distance to the line $L_{a,b}$ is $\frac{b}{a}z_1 -z_2 = \frac{k}{a}$ for some $k\in\ZZ$. So if $r\not=\frac{k}{a}$ for any $k\in\ZZ$, then $L_{a,b,r}$ cannot contain a lattice point. 

That $r=\frac{k}{a}$ is sufficient for the existence of a lattice point follows directly from the extended Euclidean Algorithm. It can also be shown with this geometric argument: $L_{a,b,r}$ can contain at most one lattice point $z$ with $0\leq z_1 < a$, for if there were two distinct lattice points with this property then $\gcd(a,b)\not=1$. On the other hand $\vpipe{a}{b}{}\cap ([0,a)\times \RR)$ contains exactly $a$ lattice points, one in each column. Only the lines $L_{a,b,-\frac{k}{a}}$ with $0\leq k\leq a-1$ can intersect $\vpipe{a}{b}{}$. So each of them has to contain at least one lattice point.
\end{proof}

\begin{Lemma}
\label{lem:translation}
For every $0<a,b\in\NN$ and $r\in\RR$
\begin{enumerate}
\renewcommand{\theenumi}{\arabic{section}.\arabic{Lemma}.\arabic{enumi}}
\renewcommand{\labelenumi}{\arabic{enumi}.}
\item $S_{a,b,r} = S_{a,b} + v$ and $C_{a,b,r} = C_{a,b} + v$ for some $v\in\ZZ^2$ and \label{lem:translation1}
\item  $S^-_{a,b} = S_{a,b} + v$ and $C^-_{a,b} = C_{a,b} + v$ for some $v\in\ZZ^2$.\label{lem:translation2}
\end{enumerate}
\end{Lemma}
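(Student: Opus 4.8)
The plan is to prove both statements by explicitly producing the translation vector $v\in\ZZ^2$ and then verifying that translating the defining inequalities of the staircase by $v$ recovers the target set. The whole argument rests on the observation that the sets $\vpipe{a}{b}{\sigma}$, $\hpipe{a}{b}{\sigma}$, $S$ and $C$ are all defined by conditions of the form $0\le\sigma(\frac{b}{a}z_1-z_2+r)<c$ on $z\in\ZZ^2$, and that such a condition is invariant under adding to $z$ any lattice vector $v$ that shifts the quantity $\frac{b}{a}z_1-z_2$ by an integer multiple of $\frac1a$ equal to the desired change in $r$. So first I would record the key algebraic fact: for $v=(v_1,v_2)\in\ZZ^2$ one has $\frac{b}{a}(z_1+v_1)-(z_2+v_2)+r=\frac{b}{a}z_1-z_2+(r+\frac{b v_1 - a v_2}{a})$. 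Since $\gcd(a,b)=1$, by the extended Euclidean Algorithm (equivalently Lemma~\ref{lem:dist-to-line}) the integer combinations $b v_1 - a v_2$ range over all of $\ZZ$ as $v$ ranges over $\ZZ^2$; hence by choosing $v$ appropriately we can shift the ``offset'' $r$ by any multiple of $\frac1a$.

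For part~\ref{lem:translation1}, I would first reduce to the case $r=\frac{k}{a}$ for some $k\in\ZZ$: the sets $\vpiper{a}{b}{r}{+}$ and $\hpiper{a}{b}{r}{+}$ depend on $r$ only through the position of the line relative to the lattice, and more precisely the relevant inequalities $0\le\frac{b}{a}z_1-z_2+r<c$ involve values of $\frac{b}{a}z_1-z_2$ that all lie in $\frac1a\ZZ$; consequently the set of lattice solutions does not change as $r$ varies continuously between consecutive values of the form $\frac{k}{a}$, and changes only when $r$ crosses such a value. Thus $S_{a,b,r}=S_{a,b,\lfloor ra\rfloor/a}$ (or a suitable representative), and it suffices to handle $r=\frac{k}{a}$. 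For such an $r$, I pick via the Euclidean Algorithm a vector $v\in\ZZ^2$ with $b v_1 - a v_2 = k$, so that $\frac{b}{a}(z_1+v_1)-(z_2+v_2)=\frac{b}{a}z_1-z_2+\frac{k}{a}$. Then $z\in S_{a,b}$ iff $z$ satisfies the $r=0$ inequalities iff $z+v$ satisfies the $r=\frac{k}{a}$ inequalities iff $z+v\in S_{a,b,r}$, giving $S_{a,b,r}=S_{a,b}+v$; the identical computation with $C$ in place of $S$ handles the corners.

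For part~\ref{lem:translation2}, I would exploit the fact that $S^-_{a,b}$ is governed by the same line $L_{a,b}$ but with the opposite sign $\sigma=-$, so $z\in S^-_{a,b}$ iff $0\le -(\frac{b}{a}z_1-z_2)<c$, i.e.\ iff $\frac{b}{a}z_1-z_2\in(-c,0]$, whereas the $\sigma=+$ condition asks $\frac{b}{a}z_1-z_2\in[0,c)$. These are half-open intervals on opposite sides of $0$, and I want a lattice vector $v$ translating one onto the other. Concretely, one checks that adding a suitable $v$ (again obtained from the Euclidean Algorithm, shifting $\frac{b}{a}z_1-z_2$ by the right multiple of $\frac1a$) carries the $+$ solution set onto the $-$ solution set; here the endpoint convention (the distinction between $<c$ and $\le0$, i.e.\ which boundary lattice points are included) must be tracked carefully, and this is the main obstacle. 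The subtlety is that the two half-spaces $H^+$ and $H^-$ share the boundary line and the inequalities are half-open in a way that is not symmetric under $z\mapsto -z$, so I would verify on the level of the governing quantity $\frac{b}{a}z_1-z_2\in\frac1a\ZZ$ that no boundary point is gained or lost, rather than relying on a naive reflection. Once the offsets and endpoint inclusions are matched, the same one-line equivalence as in part~\ref{lem:translation1} yields $S^-_{a,b}=S_{a,b}+v$ and $C^-_{a,b}=C_{a,b}+v$.
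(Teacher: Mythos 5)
Your proposal is correct and takes essentially the same route as the paper: both proofs rest on the discreteness fact that $\frac{b}{a}z_1-z_2$ only takes values in $\frac1a\ZZ$ (Lemma~\ref{lem:dist-to-line}), use it to reduce part~\ref{lem:translation1} to $r=\frac{k}{a}$ and then translate by a lattice vector realizing the shift, and in part~\ref{lem:translation2} use the same discreteness to replace the half-open value intervals $[0,c)$ and $(-c,0]$ by closed intervals of multiples of $\frac1a$ whose endpoints can be matched by a lattice translation, which is exactly the paper's computation with $\frac{a-1}{a}$. Two minor points: your vector in part~\ref{lem:translation1} should satisfy $bv_1-av_2=-k$ rather than $+k$ (with your choice the same argument gives $S_{a,b,r}=S_{a,b}+(-v)$, so nothing is lost), and in part~\ref{lem:translation2} the translation vectors for $S^-_{a,b}$ and $C^-_{a,b}$ are in general \emph{different}, since the governing intervals have different lengths $\max(1,\frac{b}{a})$ and $\min(1,\frac{b}{a})$ (e.g.\ for $S_{5,2}$ the value shifts are $-\frac45$ and $-\frac15$) --- a subtlety glossed over both in your last sentence and in the paper's own proof, where the horizontal pipe is translated by ``a suitable $v$'' that need not equal the one used for the vertical pipe.
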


\begin{proof}
\emph{1.} By Lemma~\ref{lem:dist-to-line}, if $\frac{k}{a}\leq r < \frac{k+1}{a}$ then $S_{a,b,r}=S_{a,b,\frac{k}{a}}$ and $C_{a,b,r}=C_{a,b,\frac{k}{a}}$. Hence we can assume without loss of generality $r=\frac{k}{a}$, so the line $L_{a,b,r}$ contains a lattice point $v=(v_1,v_2)$ with $v_2=\frac{b}{a}v_1+r$. Then
\begin{eqnarray*}
z\in H_{a,b,r} - v & \lrAr & z_2 + v_2 \leq \frac{b}{a}(z_1 + v_1) +r \\
& \lrAr & z_2  \leq \frac{b}{a}z_1 \lrAr z\in H_{a,b}.
\end{eqnarray*}
This implies the first claim.

\emph{2.} 
By Lemma~\ref{lem:dist-to-line} there is no lattice point $v'$ with $\frac{a-1}{a}<\frac{b}{a}v'_1 -v'_2<1$, so
\begin{eqnarray*}
\vpipe{a}{b}{-} &=&
\set{z\in\ZZ^2}{0\leq -(\frac{b}{a}z_1 - z_2) \leq \frac{a-1}{a}}.
\end{eqnarray*}

Also by Lemma~\ref{lem:dist-to-line}, there exists a lattice point $v$ with $\frac{b}{a}v_1 -v_2=-\frac{a-1}{a}$ and for this point $v$
\begin{eqnarray*}
\vpipe{a}{b}{-} - v&=&
\set{z\in\ZZ^2}{0\leq -(\frac{b}{a}(z_1+v_1) - (z_2+v_2)) \leq \frac{a-1}{a}}\\ &=&
\set{z\in\ZZ^2}{0\leq -(\frac{b}{a}z_1 - z_2 -\frac{a-1}{a}) \leq \frac{a-1}{a}}\\ &=&
\set{z\in\ZZ^2}{\frac{a-1}{a}\geq \frac{b}{a}z_1 - z_2 \geq 0}.
\end{eqnarray*}
Applying the first observation again, we obtain
$$\vpipe{a}{b}{-} - v = \vpipe{a}{b}{+}.$$
A similar argument shows $\hpipe{a}{b}{-} - v = \hpipe{a}{b}{+}$ for a suitable $v$. Now, because of Fact~\ref{fact:pipes}, $S_{a,b}^\sigma=\vpipe{a}{b}{\sigma}$ and $C_{a,b}^\sigma=\hpipe{a}{b}{\sigma}$ or $S_{a,b}^\sigma=\hpipe{a}{b}{\sigma}$ and $C_{a,b}^\sigma=\vpipe{a}{b}{\sigma}$, depending on whether $a>b$ or $a<b$, where $\sigma\in\{+,-\}$. Therefore the above calculations imply \ref{lem:translation2}.
\end{proof}

\vspace{1em}
The previous operations translated the staircases by an integral vector. Now we will introduce some other useful operations. We denote the reflection at the main diagonal by $\diagrefl\,$ and the reflection at the origin by $\orgrefl\,$, i.e.\ we define $\diagrefl(x,y)=(y,x)$ and $\orgrefl(x,y)=(-x,-y)$. Note that both induce involutions, i.e.\ self-inverse bijective maps, on sets of lattice points; so we understand a set of lattice points if and only if we understand its reflection. The effect of these two reflections on staircases is illustrated with an example in Figures~\ref{fig:diag-reflection} and~\ref{fig:org-reflection} and formalized in Lemmas~\ref{lem:diag-reflection} and~\ref{lem:org-reflection}. 

\begin{figure}[ht]
\begin{center}
 \includegraphics[scale=0.3]{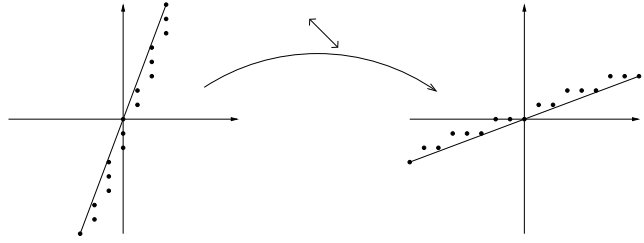}
\caption{\label{fig:diag-reflection}The reflection at the main diagonal swaps numerator and denominator of a staircase and places the points on the opposite side of the line. Here we see $\diagrefl S_{3,8}=S^-_{8,3}$.}
\end{center}
\end{figure}

\begin{figure}[ht]
\begin{center}
 \includegraphics[scale=0.3]{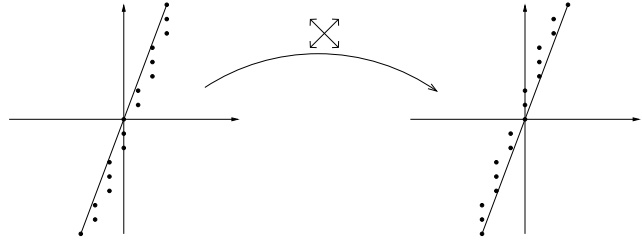}
\caption{\label{fig:org-reflection} The reflection at the origin transforms a staircase below the line into a staircase above the line and vice versa. Here we see $\orgrefl S_{3,8}= S^-_{3,8}$. Notice how the column sequence is reversed!}
\end{center}
\end{figure}

\begin{Lemma}
\label{lem:diag-reflection}
$\diagrefl S_{a,b}^\sigma = S_{b,a}^{-\sigma}$ and $\diagrefl C_{a,b}^\sigma = C_{b,a}^{-\sigma}$.
\end{Lemma}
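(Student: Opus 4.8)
The plan is to unfold both sides of the claimed identity into their defining inequalities and check they coincide. Recall that $\diagrefl(x,y)=(y,x)$ swaps the two coordinates, so my first step is to understand how this reflection interacts with the half-space condition. A lattice point $z=(z_1,z_2)$ lies in $S_{a,b}^\sigma$ precisely when $z\in H_{a,b}^\sigma$ but one of the two neighbors $z-\sigma e_1$ or $z+\sigma e_2$ leaves $H_{a,b}^\sigma$. I would start by computing the image under $\diagrefl$ of the membership test for $H_{a,b}^\sigma$: since $H_{a,b}^\sigma=\set{x\in\RR^2}{0\leq\sigma(\frac{b}{a}x_1-x_2)}$, applying the swap $x_1\leftrightarrow x_2$ turns the governing quantity $\frac{b}{a}x_1-x_2$ into $\frac{b}{a}x_2-x_1$, which up to the positive factor $\frac{a}{b}$ equals $-(\frac{a}{b}x_1-x_2)$. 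Hence $\diagrefl H_{a,b}^\sigma=H_{b,a}^{-\sigma}$, and I expect this sign flip of $\sigma$ together with the swap of $a$ and $b$ to be the whole content of the lemma.

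With that reflection of the half-space in hand, the second step is to track what happens to the two neighbor conditions. The horizontal step $e_1$ and the vertical step $e_2$ are exchanged by $\diagrefl$, so the condition ``$z-\sigma e_1\not\in H_{a,b}^\sigma$'' becomes, after reflecting, ``$\diagrefl z-\sigma e_2\not\in H_{b,a}^{-\sigma}$'', and likewise ``$z+\sigma e_2\not\in H_{a,b}^\sigma$'' becomes ``$\diagrefl z+\sigma e_1\not\in H_{b,a}^{-\sigma}$''. The only subtlety is matching signs: the defining description of $S_{b,a}^{-\sigma}$ uses the steps $-(-\sigma)e_1=\sigma e_1$ and $+(-\sigma)e_2=-\sigma e_2$, so I must verify that $\diagrefl z-\sigma e_2$ corresponds to the $+(-\sigma)e_2$ neighbor and $\diagrefl z+\sigma e_1$ to the $-(-\sigma)e_1$ neighbor in the $S_{b,a}^{-\sigma}$ description. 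This is a bookkeeping check that the ``or'' clause for $S_{a,b}^\sigma$ maps exactly onto the ``or'' clause for $S_{b,a}^{-\sigma}$; the analogous argument with ``and'' in place of ``or'' handles the corners $C$.

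A cleaner route, which I would probably prefer, avoids the step-by-step neighbor analysis by working through the $\mathcal{V}$ and $\mathcal{H}$ sets. Since $\diagrefl$ swaps horizontal and vertical directions, it should swap the roles of $\vpipe{}{}{}$ and $\hpipe{}{}{}$, giving $\diagrefl\vpipe{a}{b}{\sigma}=\hpipe{b}{a}{-\sigma}$ and $\diagrefl\hpipe{a}{b}{\sigma}=\vpipe{b}{a}{-\sigma}$. These two identities I can verify directly from the inequality descriptions of $\mathcal{V}$ and $\mathcal{H}$ given in the excerpt, which is a short computation once the half-space reflection $\diagrefl H_{a,b}^\sigma=H_{b,a}^{-\sigma}$ is established. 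Then, since $S=\mathcal{V}\cup\mathcal{H}$ and $C=\mathcal{V}\cap\mathcal{H}$ by definition, and reflection commutes with union and intersection, the result follows immediately: $\diagrefl S_{a,b}^\sigma=\diagrefl(\vpipe{a}{b}{\sigma}\cup\hpipe{a}{b}{\sigma})=\hpipe{b}{a}{-\sigma}\cup\vpipe{b}{a}{-\sigma}=S_{b,a}^{-\sigma}$, and identically for $C$.

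The main obstacle I anticipate is purely sign-handling: keeping the flip $\sigma\mapsto-\sigma$ consistent through the strict-versus-nonstrict inequality bounds in the definitions of $\mathcal{V}$ and $\mathcal{H}$ (for $\sigma=-$ the relevant inequality $0\leq\sigma(\cdots)<1$ must be re-read carefully). I would guard against an off-by-one or half-open-interval error by checking the derivation against the worked example in Figure~\ref{fig:diag-reflection}, namely $\diagrefl S_{3,8}=S^-_{8,3}$, which confirms both the swap $(a,b)\mapsto(b,a)$ and the sign flip $\sigma\mapsto-\sigma$.
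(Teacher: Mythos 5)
Your proposal is correct and takes essentially the same approach as the paper: your first two paragraphs (establishing $\diagrefl H_{a,b}^\sigma = H_{b,a}^{-\sigma}$ and then tracking the two neighbor conditions with exactly the sign bookkeeping you describe) are precisely the argument the paper gives. Your preferred variant via $\diagrefl\vpipe{a}{b}{\sigma}=\hpipe{b}{a}{-\sigma}$ and $\diagrefl\hpipe{a}{b}{\sigma}=\vpipe{b}{a}{-\sigma}$, combined with $S=\mathcal{V}\cup\mathcal{H}$ and $C=\mathcal{V}\cap\mathcal{H}$, is the same half-space computation repackaged through the other equivalent definition of $S$ and $C$, so it is a cosmetic reorganization rather than a genuinely different route.
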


In other words, reflection at the main diagonal swaps numerator and denominator of the slope and places the points on the opposite side of the line. See Figure~\ref{fig:diag-reflection}.

\begin{proof} We compute
\begin{eqnarray*}
 (x,y)\in H_{a,b}^\sigma &\lrAr& \sigma y\leq \sigma(\frac{b}{a}x) \quad\lrAr\quad \sigma x \geq \sigma (\frac{a}{b}y)\\
&\lrAr&-(\sigma x)\leq -\sigma(\frac{a}{b}y)\quad\lrAr\quad (y,x)\in H_{b,a}^{-\sigma}\\
&\lrAr& \diagrefl(x,y)\in H_{b,a}^{-\sigma}.
\end{eqnarray*}
This implies both 
\begin{eqnarray*}
z-\sigma e_1\not\in H^\sigma_{a,b} & \lrAr & \diagrefl (z)+(-\sigma)e_2\not\in H^{-\sigma}_{b,a} \quad \text{and}\\ 
z+\sigma e_2\not\in H^\sigma_{a,b} & \lrAr & \diagrefl (z)-(-\sigma)e_1\not\in H^{-\sigma}_{b,a}.
\end{eqnarray*}
All three equivalences taken together give $\diagrefl S_{a,b}^\sigma = S_{b,a}^{-\sigma}$ and $\diagrefl C_{a,b}^\sigma = C_{b,a}^{-\sigma}$.
\end{proof}

\begin{Lemma}
\label{lem:org-reflection}
$\orgrefl S_{a,b}^\sigma = S_{a,b}^{-\sigma}$ and $\orgrefl C_{a,b}^\sigma = C_{a,b}^{-\sigma}$.

Thus $(|\col_n(S_{a,b}^\sigma)|)_{n}=(|\col_{-n}(S_{a,b}^{-\sigma})|)_{n}$ and $(|\col_n(C_{a,b}^\sigma)|)_{n}=(|\col_{-n}(C_{a,b}^{-\sigma})|)_{n}$.
\end{Lemma}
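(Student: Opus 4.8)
The plan is to mimic the structure of the proof of Lemma~\ref{lem:diag-reflection} almost verbatim, since the origin reflection $\orgrefl$ is an even simpler linear lattice transformation than $\diagrefl$. First I would establish the fundamental membership equivalence for the half-spaces under $\orgrefl$. Unwinding the definition of $H^\sigma_{a,b}$, for a point $z=(x,y)$ we have $z\in H^\sigma_{a,b}$ iff $0\leq\sigma(\frac{b}{a}x-y)$. Applying $\orgrefl$ sends $z$ to $(-x,-y)$, and since $\frac{b}{a}(-x)-(-y)=-(\frac{b}{a}x-y)$, the sign of the defining expression flips. Hence I would show the clean statement
\[
z\in H^\sigma_{a,b}\quad\lrAr\quad \orgrefl(z)\in H^{-\sigma}_{a,b},
\]
which is the exact analogue of the central display in Lemma~\ref{lem:diag-reflection}, except that here the slope parameters $a,b$ are unchanged (only $\sigma$ flips) because reflecting through the origin preserves the line's slope.

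Next I would push this equivalence through the combinatorial definition of the staircase and corners given in the ``In other words'' displays, namely $S_{a,b}^\sigma=\set{z}{z\in H^\sigma_{a,b}\text{ but }z-\sigma e_1\not\in H^\sigma_{a,b}\text{ or }z+\sigma e_2\not\in H^\sigma_{a,b}}$, and similarly for $C$ with ``or'' replaced by ``and''. The key observation is that $\orgrefl$ is linear, so $\orgrefl(z-\sigma e_1)=\orgrefl(z)+\sigma e_1=\orgrefl(z)-(-\sigma)e_1$ and $\orgrefl(z+\sigma e_2)=\orgrefl(z)-\sigma e_2=\orgrefl(z)+(-\sigma)e_2$. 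Combining these with the half-space equivalence above yields
\begin{eqnarray*}
z-\sigma e_1\not\in H^\sigma_{a,b} &\lrAr& \orgrefl(z)-(-\sigma)e_1\not\in H^{-\sigma}_{a,b},\\
z+\sigma e_2\not\in H^\sigma_{a,b} &\lrAr& \orgrefl(z)+(-\sigma)e_2\not\in H^{-\sigma}_{a,b}.
\end{eqnarray*}
Reading off the defining conditions for $S$ and $C$ with $\sigma$ replaced by $-\sigma$, and using that $\orgrefl$ is an involutive bijection on $\ZZ^2$, gives $\orgrefl S_{a,b}^\sigma=S_{a,b}^{-\sigma}$ and $\orgrefl C_{a,b}^\sigma=C_{a,b}^{-\sigma}$, which is the first assertion.

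For the column-sequence consequence, I would simply observe that $\orgrefl$ maps the column $\col_n$ to the column $\col_{-n}$: a point $(n,y)$ goes to $(-n,-y)$, so the points with first coordinate $n$ in a set $A$ are in bijection (via $\orgrefl$) with the points with first coordinate $-n$ in $\orgrefl(A)$. Therefore $|\col_n(A)|=|\col_{-n}(\orgrefl A)|$ for any $A\subset\ZZ^2$, and specializing $A=S_{a,b}^\sigma$ together with the identity $\orgrefl S_{a,b}^\sigma=S_{a,b}^{-\sigma}$ gives $(|\col_n(S_{a,b}^\sigma)|)_n=(|\col_{-n}(S_{a,b}^{-\sigma})|)_n$, and likewise for $C$. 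I do not anticipate a genuine obstacle here; the only point requiring slight care is bookkeeping the sign interactions between $\orgrefl$ and the basis vectors $e_1,e_2$ (note $\orgrefl$ negates both coordinates, turning a $-\sigma e_1$ offset into $+\sigma e_1$), and making sure the ``or''/``and'' clauses in the definitions of $S$ and $C$ are preserved rather than interchanged under the correspondence. Since both clauses transform the same way, the logical structure is preserved and no case splitting on flat versus steep is needed, making this proof cleaner than that of Lemma~\ref{lem:diag-reflection}.
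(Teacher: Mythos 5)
Your proposal is correct and follows essentially the same route as the paper: establish the half-space equivalence $z\in H^\sigma_{a,b}\lrAr\;\orgrefl(z)\in H^{-\sigma}_{a,b}$, push it through the defining clauses of $S$ and $C$ exactly as in the proof of Lemma~\ref{lem:diag-reflection} (the paper leaves the shifted-point equivalences implicit with a ``like in the proof of Lemma~\ref{lem:diag-reflection}'' remark, whereas you spell them out), and then obtain the column-sequence claim from the bijection $\col_n(S_{a,b}^\sigma)\leftrightarrow\col_{-n}(S_{a,b}^{-\sigma})$ induced by $\orgrefl$. Your bookkeeping of the sign interactions and of the ``or''/``and'' clauses is accurate, so there is no gap.
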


This means that reflection at the origin maps a staircase below the line to the staircase above the line and vice versa. This operation reverses the Beatty sequence of the staircase. See Figure~\ref{fig:org-reflection}.

\begin{proof} We compute
\begin{eqnarray*}
 (x,y)\in H_{a,b}^\sigma &\lrAr& \sigma y\leq \sigma(\frac{b}{a}x) \quad\lrAr\quad -\sigma(-y) \leq -\sigma (\frac{b}{a}(-x))\\
&\lrAr& (-x,-y)\in H_{a,b}^{-\sigma} \quad\lrAr\quad \orgrefl(x,y)\in H_{a,b}^{-\sigma}.
\end{eqnarray*}
which implies both $\orgrefl S_{a,b}^\sigma  = S_{a,b}^{-\sigma}$ and  $\orgrefl C_{a,b}^\sigma = C_{a,b}^{-\sigma}$, like in the proof of Lemma~\ref{lem:diag-reflection}.

For the second claim of the lemma we observe (using what we have already shown) that
\begin{eqnarray*}
 (n,y)\in \left(\col_n(S_{a,b}^\sigma)\right) &\lrAr& (n,y)\in  S_{a,b}^\sigma \quad\lrAr\quad (-n,-y)\in\; \orgrefl S_{a,b}^\sigma\\
&\lrAr& (-n,-y)\in S_{a,b}^{-\sigma} \quad\lrAr\quad (-n,-y)\in \left(\col_{-n}(S_{a,b}^{-\sigma})\right).
\end{eqnarray*}
As this gives us for any fixed $n$ a bijection between the sets $\col_n(S_{a,b}^\sigma)$ and ${\col_{-n}(S_{a,b}^{-\sigma})}$, their cardinality must be the same. The argument for $C_{a,b}^\sigma$ is analogous.
\end{proof}

Putting Lemmas~\ref{lem:translation2} and \ref{lem:org-reflection} together, we immediately obtain the non-obvious statement that reversing a Beatty sequence yields the same sequence up to shift.

\begin{Cor}
\label{cor:reversal}
$(|\col_n(S_{a,b}^\sigma)|)_n\equiv(|\col_{-n}(S_{a,b}^\sigma)|)_n$.
\end{Cor}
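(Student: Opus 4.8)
The plan is to chain the two cited lemmas together so that the \emph{reversed} column sequence is expressed first as the column sequence of the staircase on the opposite side of the line, and then as a \emph{shift} of the original column sequence.

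First I would unwind the reversal using Lemma~\ref{lem:org-reflection}. Its second assertion says $(|\col_n(S_{a,b}^\sigma)|)_n=(|\col_{-n}(S_{a,b}^{-\sigma})|)_n$, i.e.\ $|\col_n(S_{a,b}^\sigma)|=|\col_{-n}(S_{a,b}^{-\sigma})|$ for every $n$. Substituting $n\mapsto -n$ gives
\[
|\col_{-n}(S_{a,b}^\sigma)|=|\col_{n}(S_{a,b}^{-\sigma})|\qquad(n\in\ZZ),
\]
so the reversed sequence $(|\col_{-n}(S_{a,b}^\sigma)|)_n$ is literally the (un-reversed) column sequence of the staircase on the other side of the line, namely $(|\col_n(S_{a,b}^{-\sigma})|)_n$.

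Next I would compare the two staircases $S_{a,b}^\sigma$ and $S_{a,b}^{-\sigma}$ via Lemma~\ref{lem:translation2}, which produces an integer vector $v=(v_1,v_2)\in\ZZ^2$ with $S_{a,b}^{-\sigma}=S_{a,b}^{\sigma}+v$ (the lemma is stated for $\sigma=+$, but translation is symmetric, so it applies for either choice of $\sigma$). The key elementary point is that translating a lattice set by an integer vector only relabels columns and shifts within them: from $(x,y)\in A\Leftrightarrow(x+v_1,y+v_2)\in A+v$ one gets $\col_n(A+v)=\col_{n-v_1}(A)+v$, and since a translation is a bijection it preserves cardinalities column by column, so $|\col_n(A+v)|=|\col_{n-v_1}(A)|$. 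Applying this with $A=S_{a,b}^\sigma$ yields
\[
|\col_n(S_{a,b}^{-\sigma})|=|\col_{n-v_1}(S_{a,b}^\sigma)|\qquad(n\in\ZZ),
\]
so $(|\col_n(S_{a,b}^{-\sigma})|)_n\equiv(|\col_n(S_{a,b}^\sigma)|)_n$ with shift $i=-v_1$.

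Finally I would combine the two displayed identities: the reversed sequence equals the column sequence of $S_{a,b}^{-\sigma}$, which is in turn a shift of the column sequence of $S_{a,b}^\sigma$; hence $(|\col_{-n}(S_{a,b}^\sigma)|)_n\equiv(|\col_n(S_{a,b}^\sigma)|)_n$, as claimed. I do not anticipate a genuine obstacle here; the only point requiring a moment of care is the bookkeeping in the middle step --- checking that an integer translation merely shifts the column index and leaves each column's cardinality intact --- which is precisely why the corollary can be said to follow \emph{immediately} once Lemmas~\ref{lem:translation2} and~\ref{lem:org-reflection} are in hand.
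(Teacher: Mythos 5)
Your proposal is correct and follows essentially the same route as the paper's own one-line proof: both chain Lemma~\ref{lem:org-reflection} (reversal equals the column sequence of $S_{a,b}^{-\sigma}$) with Lemma~\ref{lem:translation2} (the two staircases differ by an integer translation, which only shifts the column sequence). You merely traverse the chain in the opposite order and spell out the bookkeeping step $|\col_n(A+v)|=|\col_{n-v_1}(A)|$ that the paper leaves implicit.
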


\begin{proof}
$ (|\col_n(S_{a,b}^\sigma)|)_n\equiv (|\col_n(S_{a,b}^{-\sigma})+v|)_n \equiv (|\col_n(S_{a,b}^{-\sigma})|)_n = (|\col_{-n}(S_{a,b}^{\sigma})|)_n$
\end{proof}

Similarly, Lemma~\ref{lem:translation1} implies that $C_{a,b}$ and $C_{a,b,r}$ have the same column sequence for any $r$.

\paragraph{Recursive Description of Staircases.} We now return to the operation called reduction, which we defined for balanced sequences in Section~\ref{sec:theproblem}. First, let us observe the relation between Beatty and Sturmian sequences more closely. The following fundamental lemma tells us that, not surprisingly, Sturmian sequences are Beatty sequences with $a>b$ and vice versa.

\begin{Lemma}
\label{Lem:reduction}
$\red{B}_{a,b}=B_{a,b \mod a}$.
Conversely if $s$ is a sequence balanced at $k\in\NN$ and $\red{s}=B_{a,b}$, then $s=B_{a,ak+b}$.
\end{Lemma}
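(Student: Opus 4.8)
The plan is to prove both statements directly from the definition of $B_{a,b}$ together with the reduction operation $\red{s}_n = s_n - k$, where $k$ is the value at which the balanced sequence sits. For the first claim, I would start from Lemma~\ref{Lem:beatty-is-balanced}, which tells me $B_{a,b}$ is balanced at $\floor{\frac{b}{a}} = b\div a$. By definition of reduction, this means $\red{B}_{a,b}(n) = B_{a,b}(n) - (b\div a)$ for all $n\in\ZZ$. So the whole task reduces to showing that subtracting the constant $b\div a$ from each term of $B_{a,b}$ produces exactly the sequence $B_{a,b\bmod a}$.

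The computation I would carry out uses the defining formula~(\ref{eqn:Beatty-1}), namely $B_{a,b}(n) = \floor{\frac{b}{a}n} - \floor{\frac{b}{a}(n-1)}$. First I would write $b = (b\div a)\cdot a + (b\bmod a)$, so that $\frac{b}{a} = (b\div a) + \frac{b\bmod a}{a}$. Then for any integer $m$,
\begin{eqnarray*}
\floor{\tfrac{b}{a}m} &=& \floor{(b\div a)\,m + \tfrac{b\bmod a}{a}m} \;=\; (b\div a)\,m + \floor{\tfrac{b\bmod a}{a}m},
\end{eqnarray*}
since $(b\div a)\,m$ is an integer and can be pulled out of the floor. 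Substituting this into the definition of $B_{a,b}(n)$ for $m=n$ and $m=n-1$, the integer parts contribute $(b\div a)\,n - (b\div a)(n-1) = b\div a$, and the remaining floor terms assemble into exactly $B_{a,b\bmod a}(n)$. Hence $B_{a,b}(n) - (b\div a) = B_{a,b\bmod a}(n)$, which is $\red{B}_{a,b} = B_{a,b\bmod a}$.

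For the converse, suppose $s$ is balanced at $k$ with $\red{s} = B_{a,b}$; by definition this means $s_n = B_{a,b}(n) + k$ for all $n$. I would show this equals $B_{a,ak+b}(n)$ by running the same identity in reverse: applying the displayed floor splitting with the integer $k$ in place of $b\div a$ gives $B_{a,ak+b}(n) = k + B_{a,b}(n)$, since $(ak+b)\div a = k + (b\div a)$ contributes $k$ beyond what $B_{a,b}$ already supplies (one checks $(ak+b)\bmod a = b\bmod a$ and that the extra $k\cdot a/a = k$ telescopes to $k$ in the difference). Thus $s_n = B_{a,ak+b}(n)$, giving $s = B_{a,ak+b}$.

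The main obstacle, such as it is, is bookkeeping rather than conceptual: I must be careful that the reduced sequence is defined with respect to the correct balancing constant. The subtle point flagged in the remark after the definition of reduction is that a constant sequence is balanced at two values; here, because Lemma~\ref{Lem:beatty-is-balanced} pins down the balancing value as $\floor{\frac{b}{a}}$ (and handles the integer-slope case separately, where $B_{a,b}$ is constant equal to $\frac{b}{a}$ and reduces to $0 = B_{a,0}$), the reduction is unambiguous and the identity $(b\div a)$-splitting of the floor is the only real ingredient. I would make sure the edge case $a\mid b$ is consistent with the formula $B_{a,b\bmod a} = B_{a,0}$, the all-zeros sequence, matching $\red{B}_{a,b}$.
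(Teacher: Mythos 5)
Your proposal is correct and follows essentially the same route as the paper: both proofs rest on the identity $B_{a,b+ka}(n)=B_{a,b}(n)+k$ (which you derive by pulling the integer $(b\div a)m$ out of the floor in~(\ref{eqn:Beatty-1}), while the paper reads it off from the fractional-part form~(\ref{eqn:Beatty-2}) — the same fact in two guises), combined with Lemma~\ref{Lem:beatty-is-balanced} to pin the balancing constant at $b\div a$. Your explicit treatment of the constant-sequence convention and the case $a\mid b$ is a fine (slightly more careful) addition, but the argument is the paper's argument.
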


\begin{proof}
By (\ref{eqn:Beatty-2}) we observe that for any $k\in\ZZ$ such that both $b$ and $b+ka$ are positive
\[
B_{a,b}(n) + k = B_{a,b+ka}(n).
\]
$B_{a,b}$ is balanced at $\floor{\frac{b}{a}}=b \div a$ by Lemma~\ref{Lem:beatty-is-balanced}. 
Note that by definition ${b \mod a} = {b - (b \div a)\,a}$.
So 
\[
\red{B}_{a,b}(n) = B_{a,b}(n) - b \div a = B_{a,b \mod a}(n).
\]
Conversely if $\red{s}(n)= B_{a,b}(n)$ and $s$ is balanced at $k$, then 
\[
s(n) = \red{s}(n) + k = B_{a,b}(n) + k = B_{a,b+ka}(n)
\]
\end{proof}

How can this relation be phrased in terms of the staircases $S_{a,b}$ and $S_{a,b \mod a}$? The following lemmas give an answer to this question. See $S_{5,13}$ and $S_{5,3}$ in Figure~\ref{fig:recursion}.

\begin{Lemma} 
\label{lem:reduction-bijection}
Let $0<a<b$. The lattice transformation $A=\begin{pmatrix}1 & 0 \\ b\div a & 1\end{pmatrix}$ gives a bijection between $C_{a,b}$ and $S_{a,b\mod a}$.
\end{Lemma}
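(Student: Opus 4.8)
The plan is to show that $A$ maps $C_{a,b}$ bijectively onto $S_{a,b\bmod a}$ by analyzing what $A$ does to each lattice point and combining this with the reduction relation on Beatty sequences (Lemma~\ref{Lem:reduction}) and the column-interpretation of these sequences (Fact~\ref{fact:interpretation}). Since $A$ is lower-triangular with determinant $1$ and integer entries, it is a linear lattice transformation, so it is automatically a bijection $\ZZ^2\to\ZZ^2$; hence the entire content of the lemma is to verify that it carries the point set $C_{a,b}$ exactly onto the point set $S_{a,b\bmod a}$. Note that $A$ fixes the first coordinate, so it maps column $n$ to column $n$; this is exactly the feature we want, since reduction should preserve the column structure.

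First I would set up notation: write $d=b\div a$, so that $b\bmod a = b-da$ and $A(x,y)=(x,y+dx)$. The key point is that since $0<a<b$, the staircase $S_{a,b}$ is steep, so by Fact~\ref{fact:pipes} and the discussion after it the corners satisfy $C_{a,b}=\hpipe{a}{b}{+}$, and moreover by Fact~\ref{fact:toppoint} every column $\col_n(C_{a,b})$ contains \emph{exactly one} point, namely the topmost point $(n,\floor{\frac{b}{a}n})$ of $\col_n(S_{a,b})$. Since $0<b\bmod a<a$ we also have that $S_{a,b\bmod a}$ is flat, so by Fact~\ref{fact:pipes} it too has exactly one point in each column, located at $(n,\floor{\frac{b\bmod a}{a}n})$ by Fact~\ref{fact:toppoint}. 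Thus both sets are graphs of functions of the column index $n$, with one point per column, and the claim reduces to the single identity
\[
\floor{\tfrac{b}{a}\,n} + d\,n \;\overset{!}{=}\; -\!\left(\text{$y$-shift}\right)\quad\text{i.e.}\quad A\big(n,\floor{\tfrac{b}{a}n}\big)=\big(n,\floor{\tfrac{b\bmod a}{a}n}\big)
\]
for every $n\in\ZZ$; but this is false as written unless I track the shift correctly, so the real computation is $\floor{\frac{b}{a}n}+dn$ versus $\floor{\frac{b\bmod a}{a}n}$.

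The heart of the verification is therefore the elementary identity $\floor{\frac{b}{a}n} = \floor{\frac{b\bmod a}{a}n} + dn$, equivalently $\floor{\frac{b}{a}n}-\floor{\frac{b\bmod a}{a}n}=dn$, which holds because $\frac{b}{a}=d+\frac{b\bmod a}{a}$ and $dn\in\ZZ$ can be pulled out of the floor. Hence $A(n,\floor{\frac{b}{a}n})=(n,\floor{\frac{b}{a}n}-dn)=(n,\floor{\frac{b\bmod a}{a}n})$, which is precisely the unique point of $S_{a,b\bmod a}$ in column $n$. Running this over all $n\in\ZZ$ shows $A$ sends the (unique per column) points of $C_{a,b}$ onto the (unique per column) points of $S_{a,b\bmod a}$, and since $A$ preserves columns and is a bijection on $\ZZ^2$, it restricts to a bijection $C_{a,b}\to S_{a,b\bmod a}$. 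Alternatively, and perhaps more cleanly, I can avoid coordinates entirely: observe that $A$ is the shear fixing vertical lines and sending the line $L_{a,b}$ to the line $L_{a,b\bmod a}$ (since the slope drops by $d$), and that shearing by an integer amount in the $y$-direction commutes with the vertical unit step $e_2$; then the defining conditions ``$z\in H$ but $z+e_2\notin H$ and $z-e_1\notin H$'' for the corners transform directly into the defining condition for $S_{a,b\bmod a}$.

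The main obstacle I anticipate is purely bookkeeping rather than conceptual: one must be careful that $C_{a,b}$ really is a single point per column (this uses steepness, $a<b$, via Fact~\ref{fact:pipes}) and that $S_{a,b\bmod a}$ is a single point per column (this uses flatness, $b\bmod a<a$), so that matching the topmost points suffices to match the whole sets. If one tried to argue directly at the level of the steep set $S_{a,b}$ rather than its corners $C_{a,b}$, the argument would fail, because the ``filler'' points below each corner in a steep staircase are not in bijective correspondence with anything in the flat target; it is essential that we start from $C_{a,b}$, whose points are exactly the graph of the characteristic sequence. The only genuine inequality to check — that the shifted point indeed lands in $S_{a,b\bmod a}$ and not merely in the correct column — is subsumed by the floor identity above together with the one-point-per-column fact, so no separate case analysis on the sign of $n$ or on $\{b n/a\}$ should be needed.
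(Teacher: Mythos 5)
Your proof follows exactly the same route as the paper's: use Facts~\ref{fact:pipes} and~\ref{fact:toppoint} to see that $C_{a,b}$ (steep case) and $S_{a,b\mod a}$ (flat case) each contain exactly one point per column, located at $(n,\floor{\frac{b}{a}n})$ and $(n,\floor{\frac{b\mod a}{a}n})$ respectively, and then match these points columnwise via the identity $\floor{\frac{b}{a}n}=(b\div a)\,n+\floor{\frac{b\mod a}{a}n}$, which you state and justify correctly.

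However, your central computation reverses the direction of $A$ and is false as written. From your own setup $A(x,y)=(x,y+dx)$ with $d=b\div a>0$, applying $A$ to a corner gives $A\left(n,\floor{\tfrac{b}{a}n}\right)=\left(n,\floor{\tfrac{b}{a}n}+dn\right)$, \emph{not} $\left(n,\floor{\tfrac{b}{a}n}-dn\right)$; the map you actually compute is $A^{-1}$. The correct direction is that $A$ carries $S_{a,b\mod a}$ onto $C_{a,b}$ --- this is how the paper argues, and it is the form needed later in Lemma~\ref{lem:recursion}.1, which asserts $C_{a,b}=A\,S_{a,b\mod a}$. Since $A$ is an invertible lattice transformation, the bijective correspondence claimed by the lemma is unaffected, so the repair is a one-line fix: apply $A$ to the points $(n,\floor{\frac{b\mod a}{a}n})$ of the flat staircase and check that they land on the corners, or keep your direction and write $A^{-1}$ throughout. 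The same reversal infects your alternative coordinate-free sketch: the shear $A$ raises the slope by $d$, it does not lower it; moreover that sketch as stated only yields one containment, and surjectivity would still need the per-column count, so it is not really a shortcut. Finally, a harmless slip: for a steep staircase Fact~\ref{fact:pipes} gives $C_{a,b}=\vpipe{a}{b}{+}$ and $S_{a,b}=\hpipe{a}{b}{+}$, not $C_{a,b}=\hpipe{a}{b}{+}$; what you actually use (one corner per column) is nevertheless correct.
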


The corners $C_{a,b}$ of the staircase $S_{a,b}$ are just the points of the smaller staircase $S_{a,b \mod a}$ up to a lattice transform. Here ``smaller'' refers to both the number of lattice points in a given interval and the encoding length of the two parameters $a$ and $b$. Note that the inverse of $A$ is $A^{-1}=\left(\begin{smallmatrix}1 & 0 \\ -b\div a & 1\end{smallmatrix}\right)$.

\begin{proof}
As $S_{a,b}$ is steep, $\col_n(C_{a,b})= \{(n,\floor{\frac{b}{a}n})\}$ and $\col_n(S_{a,b\mod a})= \{(n,\floor{\frac{b \mod a}{a}n})\}$ by Facts~\ref{fact:pipes} and~\ref{fact:toppoint}. But 
$$\begin{pmatrix} n \\ \floor{\frac{b}{a}n}\end{pmatrix} = \begin{pmatrix} n \\  (b \div a)n + \floor{\frac{b\mod a}{a}n}\end{pmatrix} = A\begin{pmatrix} n \\  \floor{\frac{b\mod a}{a}n}\end{pmatrix}.$$
\end{proof}

However, to obtain all points in $S_{a,b}$ from the corners $C_{a,b}$ we need to know which columns of $S_{a,b}$ are long and which are short (in the case $b>a$). It turns out that the corners in long columns are precisely the corners $C_{a,b\mod a}$ of the smaller staircase, again up to the lattice transformation $A$.

\begin{Lemma} 
\label{lem:reduction-long-columns}
Let $0<a<b$. Then $\col_n(C_{a,b\mod a})$ contains a point if and only if $\col_n(S_{a,b})$ is long.
\end{Lemma}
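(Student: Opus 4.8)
The plan is to turn the statement into a chain of equivalences between the three sequences $B_{a,b}$, $\red{B}_{a,b}$ and $B_{a,b\mod a}$, since by Fact~\ref{fact:interpretation} the relevant columns of both staircases are counted by these sequences.

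First I would set up the two instances of Fact~\ref{fact:interpretation} that I need. Because $0<a<b$, the staircase $S_{a,b}$ is steep, so $|\col_n(S_{a,b})|=B_{a,b}(n)$, and by Lemma~\ref{Lem:beatty-is-balanced} this sequence is balanced at $\floor{\frac{b}{a}}$. Hence each column of $S_{a,b}$ has length either $\floor{\frac{b}{a}}$ or $\floor{\frac{b}{a}}+1$, and by definition a column is \emph{long} exactly when it attains the larger value $\floor{\frac{b}{a}}+1$, equivalently when $\red{B}_{a,b}(n)=1$. On the other side, since $0\leq b\mod a<a$, Fact~\ref{fact:pipes} shows that $S_{a,b\mod a}$ is flat, so Fact~\ref{fact:interpretation} gives $|\col_n(C_{a,b\mod a})|=B_{a,b\mod a}(n)$, a $0,1$-sequence. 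Thus $\col_n(C_{a,b\mod a})$ contains a point if and only if $B_{a,b\mod a}(n)=1$.

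With these identifications in hand, the lemma follows from Lemma~\ref{Lem:reduction}, which states $\red{B}_{a,b}=B_{a,b\mod a}$:
\[
\col_n(S_{a,b})\text{ is long} \;\lrAr\; \red{B}_{a,b}(n)=1 \;\lrAr\; B_{a,b\mod a}(n)=1 \;\lrAr\; \col_n(C_{a,b\mod a})\neq\leer .
\]

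I do not expect a genuine obstacle here; the content is really just the combination of Fact~\ref{fact:interpretation} in both its steep and its flat form with the reduction identity of Lemma~\ref{Lem:reduction}. The only points deserving care are to confirm that $S_{a,b\mod a}$ is flat — so that it is the \emph{corner} columns $C_{a,b\mod a}$, and not the full columns of $S_{a,b\mod a}$, that correspond to the long columns — and to fix the meaning of ``long'' as the larger column length $\floor{\frac{b}{a}}+1$; both are immediate from $0\leq b\mod a<a$ and Lemma~\ref{Lem:beatty-is-balanced}.
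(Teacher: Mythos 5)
Your proof is correct and follows essentially the same route as the paper's: both combine Fact~\ref{fact:interpretation} (flat form, giving $|\col_n(C_{a,b\mod a})|=B_{a,b\mod a}(n)$) with the reduction identity $\red{B}_{a,b}=B_{a,b\mod a}$ of Lemma~\ref{Lem:reduction}, and identify long columns of the steep staircase $S_{a,b}$ with the positions where $\red{B}_{a,b}(n)=1$. Your write-up merely makes explicit the steep-case interpretation of ``long'' that the paper leaves implicit.
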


\begin{proof}
$S_{a,b\mod a}$ is flat. So we know by Fact~\ref{fact:interpretation} that $(n,\floor{\frac{b\mod a}{a}n})\in C_{a,b\mod a}$ if and only if $1=B_{a,b\mod a}(n)=\red{B}_{a,b}(n)$. But this just means that $\col_n(S_{a,b})$ is long.
\end{proof}

Taking the two lemmas together, we can describe every staircase $S_{a,b}$ in terms of the smaller corners and points in the smaller staircase $S_{a,b\mod a}$. This result, together with our ability to swap the parameters $a$ and $b$ (by means of Lemmas~\ref{lem:diag-reflection} and~\ref{lem:org-reflection}) and the fact that the staircases $S_{a,1}$ are easy to describe, we obtain a recursive characterization of all staircases. 

\vspace{1em}
Let us look at an example, which is shown in Figure~\ref{fig:recursion}, before we formulate the recursion formally in Lemma~\ref{lem:recursion}. We want to express $S_{5,13}$ in terms of smaller staircases. 

We know that the topmost points in each column are the points in $C_{5,13}$ and $C_{5,13}$ is just the image of $S_{5,3}$ under the lattice transformation $A=\left(\begin{smallmatrix}1 & 0 \\ 2 & 1\end{smallmatrix}\right)$. Note that $A$ keeps columns invariant.

We also know that $S_{5,13}$ has columns of lengths $2$ and $3$ and that the long columns are precisely those in which $C_{5,3}$ contains a point. So if we have an expression for $S_{5,3}$ and $C_{5,3}$, we can give an expression for $S_{5,13}$ and $C_{5,13}$.

To continue this argument inductively, we need to swap the parameters $a$ and $b$, but this we can achieve by reflecting the staircases at the origin and at the main diagonal. So we reduce the problem of describing $S_{5,3}$ to the problem of describing $S_{3,5}$. We can now continue in this fashion, expressing $S_{3,5}$ in terms of $S_{3,2}$, in terms of $S_{2,3}$, in terms of $S_{2,1}$.

At this point we have finally reached a staircase with integral slope. These staircases have the nice property that all columns and all rows are identical and hence they can be described by a simple expression: the Minkowski sum of the lattice points on a line with those in an interval. This entire process is illustrated in Figure~\ref{fig:recursion}.

\begin{figure}[ht]
\begin{center}
\input{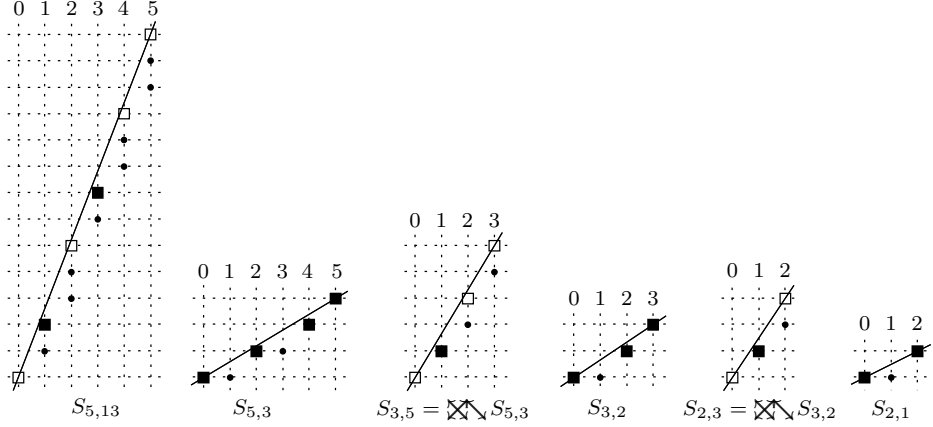}
\caption[]{\label{fig:recursion}This figure show the recursive process of expressing $S_{5,13}$ in terms of smaller staircases, described in the text. In this figure, empty squares indicate the corners of long columns, filled squares corners of short columns. Note that the empty squares occur in $S_{a,b}$ precisely in the columns, in which there is an element of $C_{a,b\mod a}$.}
\end{center}
\end{figure}

\begin{Lemma}
\label{lem:recursion}
Let $0<a,b\in\NN$ and $A=\begin{pmatrix}1 & 0 \\ b\div a & 1\end{pmatrix}$.
\begin{enumerate}
\item If $a<b$ and $\gcd(a,b)=1$, then
  \begin{eqnarray*}
    C_{a,b} & = & A S_{a,b\mod a} \\
    S_{a,b} & = & A S_{a,b\mod a} + \{ \twovec{0}{0} ,\ldots, \twovec{0}{-(b\div a) +1}\}\\
           &&\cup\; A C_{a,b\mod a} + \{ \twovec{0}{-(b\div a)}\}.
  \end{eqnarray*}
\item $C_{a,b} = \;\orgrefl\,\diagrefl C_{b,a}$ and $S_{a,b}  = \; \orgrefl\,\diagrefl S_{b,a} $
\item If $b=1$, then
  \begin{eqnarray*}
    C_{a,b} & = & \{\twovec{ka}{k}:k\in\ZZ\} \\
    S_{a,b} & = & \{\twovec{ka}{k}:k\in\ZZ\} + \{ \twovec{0}{0},\ldots,\twovec{a-1}{0} \}
  \end{eqnarray*}
\end{enumerate}
\end{Lemma}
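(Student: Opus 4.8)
The plan is to prove the three parts by assembling the preceding lemmas, disposing of the two reflection statements and the base case quickly and reserving the column-by-column decomposition for the main part. I would present the argument in the order 2, 3, 1, but state it as the lemma requires.

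For \emph{Part 2} I would simply compose the two reflection lemmas; note that the hypotheses $a<b$ and $\gcd(a,b)=1$ of Part 1 are not needed here. Reading Lemma~\ref{lem:diag-reflection} with the roles of $a$ and $b$ interchanged and $\sigma=+$ gives $\diagrefl S_{b,a}=S_{a,b}^-$, and Lemma~\ref{lem:org-reflection} with $\sigma=-$ gives $\orgrefl S_{a,b}^-=S_{a,b}$; composing, $\orgrefl\diagrefl S_{b,a}=S_{a,b}$. The identical computation with $C$ in place of $S$ yields $\orgrefl\diagrefl C_{b,a}=C_{a,b}$.

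For \emph{Part 3} I would compute directly. When $b=1$ the sequence $B_{a,1}(n)=\floor{\frac{n}{a}}-\floor{\frac{n-1}{a}}$ equals $1$ exactly when $a\mid n$ and is $0$ otherwise, so since $S_{a,1}$ is flat, Fact~\ref{fact:interpretation} places the corners precisely in the columns $n=ka$; there the corner is the topmost point $(ka,\floor{\frac{ka}{a}})=(ka,k)$ by Fact~\ref{fact:toppoint}, giving $C_{a,1}=\{\twovec{ka}{k}:k\in\ZZ\}$. Being flat, $S_{a,1}$ has exactly one point in each column $n$, namely $(n,\floor{\frac{n}{a}})$; as $\floor{\frac{n}{a}}=k$ precisely for $ka\le n\le ka+a-1$, these points are $\{\twovec{ka}{k}:k\in\ZZ\}+\{\twovec{0}{0},\dots,\twovec{a-1}{0}\}$, as claimed.

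\emph{Part 1} is the heart of the lemma, but most of the work is already done. The first identity $C_{a,b}=A S_{a,b\mod a}$ is exactly the bijection of Lemma~\ref{lem:reduction-bijection}, read as an equality of point sets. For the formula for $S_{a,b}$ I would argue column by column. Since $a<b$ the staircase is steep (Fact~\ref{fact:pipes}), so by Fact~\ref{fact:interpretation} together with Lemma~\ref{Lem:beatty-is-balanced} every column has length $b\div a$ or $(b\div a)+1$; its topmost point is the corner $(n,\floor{\frac{b}{a}n})$ (Fact~\ref{fact:toppoint}), and, as is immediate from the half-space description of $\vpipe{a}{b}{}$, the column is a contiguous run of lattice points descending from that corner. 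Hence each column contributes a fixed top block of $b\div a$ points, the corner shifted by $\twovec{0}{0},\dots,\twovec{0}{-(b\div a)+1}$, which collectively form $C_{a,b}+\{\twovec{0}{0},\dots,\twovec{0}{-(b\div a)+1}\}=A S_{a,b\mod a}+\{\twovec{0}{0},\dots,\twovec{0}{-(b\div a)+1}\}$. The long columns contribute one further point at the bottom, offset by $\twovec{0}{-(b\div a)}$ from the corner; by Lemma~\ref{lem:reduction-long-columns} these are exactly the columns meeting $C_{a,b\mod a}$, and the identity computed in Lemma~\ref{lem:reduction-bijection} shows their corners are the image $A C_{a,b\mod a}$, so the extra points form $A C_{a,b\mod a}+\{\twovec{0}{-(b\div a)}\}$. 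Taking the union gives the stated expression.

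The main obstacle I anticipate is the bookkeeping in Part 1: verifying that each column really is a contiguous downward run from its corner (so that ``top block plus possible bottom point'' exhausts the column), checking that the offset set $\twovec{0}{0},\dots,\twovec{0}{-(b\div a)+1}$ has exactly $b\div a$ entries, and confirming that the two pieces of the union are disjoint and jointly cover $S_{a,b}$. Everything else reduces to the already-established Lemmas~\ref{lem:reduction-bijection} and~\ref{lem:reduction-long-columns} and the basic facts of Section~\ref{sec:theproblem}.
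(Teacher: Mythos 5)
Your proof is correct and follows essentially the same route as the paper's: Part 1 rests on Lemma~\ref{lem:reduction-bijection} for $C_{a,b}=AS_{a,b\mod a}$ and Lemma~\ref{lem:reduction-long-columns} for the bottom points of long columns, Part 2 composes Lemmas~\ref{lem:diag-reflection} and~\ref{lem:org-reflection} exactly as the paper does, and Part 3 is the same direct computation with $\floor{\frac{n}{a}}$. The only blemish is notational: the contiguity of the columns of the steep staircase $S_{a,b}$ follows from the half-space description of $\hpipe{a}{b}{}$ (which equals $S_{a,b}$ in the steep case), not of $\vpipe{a}{b}{}$.
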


In Section~\ref{sec:characterization} we use this recursive structure to develop a characterization of Sturmian sequences. In Section~\ref{sec:short-representations} we employ the recursion to obtain short rational functions that enumerate the lattice points inside lattice polytopes in the plane, and in Section~\ref{sec:dedekind-carlitz} for a representation of Dedekind-Carlitz polynomials that is computable in polynomial time.

\begin{proof}
\emph{1.} By Lemma~\ref{lem:reduction-bijection} the first equation holds. Every column of $S_{a,b}$ contains a corner and every column contains at least $(b\div a)$ points. So $A S_{a,b\mod a} + \{ \twovec{0}{0} ,\ldots, \twovec{0}{-(b\div a) +1}\}$ contains all points in $S_{a,b}$ except the bottom-most points of the long columns. By Lemma~\ref{lem:reduction-long-columns} the long columns are precisely those in which $S_{a,b\mod a}$ has a corner. So $A C_{a,b\mod a} + \{ \twovec{0}{-b\div a}\}$ is precisely the set of bottom-most points of the long columns of $S_{a,b}$.

\vspace{0.5em}
\emph{2.} $S_{a,b} = \;\orgrefl S^-_{a,b}=\;\orgrefl\,\diagrefl S_{b,a}$ by Lemmas~\ref{lem:diag-reflection} and~\ref{lem:org-reflection} and similarly for $C_{a,b}$.

\vspace{0.5em}
\emph{3.} If $b=1$, then for all $k,n\in\ZZ$ we have $\floor{\frac{b}{a}n}=k$ if and only if $ka\leq n \leq {(k+1)a-1}$. Hence $\row_k(S_{a,b})=\{ \twovec{ka}{k},\ldots,\twovec{ka+a-1}{k} \}$ and $\row_k(C_{a,b})=\{ \twovec{ka}{k} \}$.
\end{proof}

\paragraph{Relation to the Euclidean Algorithm.} This recursion is closely related to the Euclidean Algorithm, which takes as input two natural numbers $c_1,c_2\in\NN$. In each step $c_{i+1}={c_{i-1} \mod c_i}$ is computed. This continues until we reach a $j$ such that $c_{j+1}=0$ and $c_j\not=0$. Then $c_j=\gcd(c_1,c_2)$.

Now suppose we want to determine $S_{b,a}$ and $C_{b,a}$ for some $b>a$. We flip the two parameters and then reduce the staircase, i.e.\ we apply~\ref{lem:recursion}.2 and~\ref{lem:recursion}.1. This reduces the problem to computing $S_{a,b\mod a}$ and $C_{a,b\mod a}$. Again we flip and reduce, which reduces the problem to computing $S_{b\mod a, a \mod (b\mod a)}$ and $C_{b\mod a, a \mod (b \mod a)}$ and we continue in this fashion. In other words, we put $c_1=b$, $c_2=a$ and $c_{i+1}=c_{i-1} \mod c_i$ and compute the staircases $S_{c_i,c_{i+1}}$ and $C_{c_i,c_{i+1}}$ recursively, until we arrive at the case $S_{c_{j-1},1}$ and $C_{c_{j-1},1}$ which we can solve directly by~\ref{lem:recursion}.3. That we arrive in this case eventually follows by the correctness of the Euclidean Algorithm and the assumption that $\gcd(a,b)=1$! Note also that this recursion terminates after few iterations. This is made precise in the following lemma.

\begin{Lemma}
\label{lem:euclidean-algorithm}
Let $a,b\in\NN$ and let $(c_n)_{n\in\NN}$ denote the sequence defined by $c_1=b$, $c_2=a$ and $c_{i+2}=c_i \mod c_{i+1}$. Then $\min \set{j\in\NN}{c_{j+1}=0}\in\mathcal{O}(\log a)$.
\end{Lemma}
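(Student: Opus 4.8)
The plan is to show that the Euclidean Algorithm, applied to the pair $(b,a)$, terminates in $\mathcal{O}(\log a)$ steps. The standard and cleanest way to do this is to prove that two consecutive remainders shrink by at least a factor of two, i.e.\ that $c_{i+2} < \tfrac{1}{2} c_i$ for every $i$ in the valid range. Granting this, the even-indexed subsequence $c_2, c_4, c_6,\ldots$ (and likewise the odd one) is bounded above by a geometric sequence with ratio $\tfrac{1}{2}$ starting from $c_2 = a$, so after roughly $2\log_2 a$ iterations the value must drop below $1$ and hence, being a nonnegative integer remainder, reach $0$. This yields the claimed bound $\min\set{j\in\NN}{c_{j+1}=0}\in\mathcal{O}(\log a)$.

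First I would record the basic monotonicity and termination facts: by definition of $\mod$ we have $0\leq c_{i+2} = c_i \mod c_{i+1} < c_{i+1}$, so the sequence of remainders is strictly decreasing (once we are past the first index) and consists of nonnegative integers, hence eventually hits $0$; this is just correctness of the Euclidean Algorithm and guarantees the minimum in the statement is well defined. Note also that since $c_2 = a$ governs the bound and the first step merely computes $c_3 = b \mod a < a$, the interesting quantity is how fast the remainders after index $2$ decay, which is why the bound depends on $a$ and not on $b$.

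The key step is the halving claim. I would argue as follows: fix $i$ and consider $c_{i+2} = c_i \mod c_{i+1}$. If $c_{i+1} \leq \tfrac{1}{2} c_i$, then immediately $c_{i+2} < c_{i+1} \leq \tfrac{1}{2} c_i$. Otherwise $c_{i+1} > \tfrac{1}{2} c_i$, and since $c_{i+1} < c_i$ we have $c_i \div c_{i+1} = 1$, so $c_{i+2} = c_i - c_{i+1} < c_i - \tfrac{1}{2} c_i = \tfrac{1}{2} c_i$. Either way $c_{i+2} < \tfrac{1}{2} c_i$, which is exactly the bound I want. This small case distinction is the heart of the argument and is entirely elementary.

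I do not expect a serious obstacle here; the only thing requiring a little care is bookkeeping the indices and the base of the geometric decay so as to land cleanly on an $\mathcal{O}(\log a)$ bound rather than an off-by-a-constant-factor statement. Concretely, from $c_{i+2} < \tfrac{1}{2} c_i$ one gets $c_{2+2m} < 2^{-m} a$, so once $2^{-m} a \leq 1$, i.e.\ $m \geq \log_2 a$, the term $c_{2+2m}$ is a nonnegative integer strictly below $1$ and therefore $0$ (or has already become $0$ earlier). Thus $c_{j+1}=0$ for some $j \leq 2 + 2\lceil \log_2 a\rceil$, which is $\mathcal{O}(\log a)$. Since the constants are absorbed into the $\mathcal{O}$-notation, the index bookkeeping is the only place to stay attentive, but it presents no real difficulty.
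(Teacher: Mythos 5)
Your proof is correct and follows essentially the same route as the paper: both establish that the remainders halve every two steps ($c_{i+2} < \tfrac{1}{2}c_i$, for indices $i\geq 2$) and then conclude by geometric decay from $c_2=a$. The only cosmetic difference is that the paper derives the halving in one line from the division identity $c_i = (c_i \div c_{i+1})c_{i+1} + c_{i+2} \geq c_{i+1}+c_{i+2} \geq 2c_{i+2}$, whereas you use a case distinction on whether $c_{i+1}\leq \tfrac{1}{2}c_i$.
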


\begin{proof}
$(c_i)_{i\geq 2}$ is monotonously decreasing for all $a,b\in\NN$, as by definition $c_{i+2}=c_i \mod  c_{i+1} < c_{i+1}$. Thus $c_i \div c_{i+1}\geq 1$ for $i\geq 2$ and so
\[
c_i = (\underbrace{c_i \div c_{i+1}}_{\geq 1})c_{i+1} + (\underbrace{c_i \mod c_{i+1}}_{=c_{i+2}}) \geq \underbrace{c_{i+1}}_{\geq c_{i+2}} + c_{i+2} \geq  2c_{i+2}
\]
for $i\geq 2$. Hence $c_{i+2k}\leq 2^{-k}c_i$, and so if $k\geq \log_2 c_i$, then ${c_{i+2k}\leq 1}$. In particular the minimal $j$ such that $c_{j+1}=0$ satisfies $j\leq 2\log_2 c_2 +2\in\mathcal{O}(\log a)$.
\end{proof}

\paragraph{Recursive Description of Parallelepipeds.} Instead of describing the infinite set of lattice points in an entire staircase, one might want to describe finite subsets thereof, for example the set of lattice points in only ``one period'' of the staircase. We now give a recursion for the set of lattice points in the fundamental parallelepipeds of the cones $\cone\left(\twovec{a}{b},\twovec{1}{0}\right)$ and $\cone\left(\twovec{a}{b},\twovec{0}{-1}\right)$.

The \defn{cone} generated by $v_1,\ldots,v_n\in\RR^d$ is the set 
\[
\cone(v_1,\ldots,v_n)=\set{\sum_{i=1}^n \alpha_i v_i}{0\leq \alpha_i\in\RR\text{ for all }1\leq i\leq n}.
 \]
A cone is rational if all the $v_i$ are rational and it is simplicial if the $v_i$ are linearly independent. The \defn{fundamental parallelepiped} $\Pi_{\cone(v_1,\ldots,v_n)}$ of a simplicial cone $\cone(v_1,\ldots,v_n)$ is defined as
\[
 \Pi_{\cone(v_1,\ldots,v_n)}\define \set{\sum_{i=1}^n \alpha_i v_i}{0\leq \alpha_i < 1\text{ for all }1\leq i\leq n}.
\]
Note that any rational cone $\cone(v_1,\ldots,v_n)\subseteq \RR^m$ can be transformed unimodularly to a rational cone $\cone(\sigma e_j,v'_1,\ldots,v'_n)$ with $\sigma\in\{+,-\}$ and $1\le j \le m$. So we don't restrict ourselves by only looking at cones containing $e_1$ or $-e_2$ in the generators.

With the above notation 
\begin{eqnarray*}
 \vpipe{a}{b}{}\cap [0,a) \times \RR &=& \Pi_{\cone\left(\twovec{0}{-1},\twovec{a}{b}\right)}\cap\ZZ^2\\
 \hpipe{a}{b}{}\cap \RR \times [0,b) &=& \Pi_{\cone\left(\twovec{1}{0},\twovec{a}{b}\right)}\cap\ZZ^2,
\end{eqnarray*}
see Figure \ref{fig:staircase-and-parallelepiped}. This means that if $a<b$ (and hence $S_{a,b}=\hpipe{a}{b}{}$), the points $z$ in the staircase $S_{a,b}$ with ${0\leq z_2 < b}$ are just the lattice points in the fundamental parallelepiped $\Pi_{\cone\left(\twovec{1}{0},\twovec{a}{b}\right)}$. The corners $z\in C_{a,b}$ with $0 \leq z_1 < a$ are just the lattice points in the fundamental parallelepiped $\Pi_{\cone\left(\twovec{0}{-1},\twovec{a}{b}\right)}$. 

\begin{figure}[ht]
\begin{center}
\input{parallelepiped_scaled.pstex_t}
\caption[]{\label{fig:staircase-and-parallelepiped}If we intersect $\vpipe{a}{b}{}$ with $[0,a)\times \RR$ we obtain the fundamental parallelepiped of the cone generated by $\twovec{0}{-1}$ and $\twovec{a}{b}$.}
\end{center}
\end{figure}

To give an interpretation of our recursion in terms of fundamental parallelepipeds it is convenient to define the set $\ppo_{\cone(v_1,\ldots,v_n)}$ of lattice points (!)~in the open fundamental parallelepiped of ${\cone(v_1,\ldots,v_n)}$ as
\[
 \ppo_{\cone(v_1,\ldots,v_n)}\define \ZZ^2 \cap \set{\sum_{i=1}^n \alpha_i v_i}{0< \alpha_i < 1}.
\]
Note that if $n=2$ and both $v_1$ and $v_2$ are primitive, then $\ppo_{\cone(v_1,v_2)}\cup \{\twovec{0}{0}\}= \ZZ^2 \cap\Pi_{\cone(v_1,v_2)}$. So it suffices to give a recursion for the sets of lattice points in open fundamental parallelepipeds.

We are going to use the following abbreviations:
\begin{eqnarray*}
\Pi_{\downarrow,a,b} := \Pi_{\cone(\twovec{0}{-1},\twovec{a}{b})} && \ppo_{\downarrow,a,b} := \ppo_{\cone(\twovec{0}{-1},\twovec{a}{b})}\\
\Pi_{\rightarrow,a,b} := \Pi_{\cone(\twovec{1}{0},\twovec{a}{b})} && \ppo_{\rightarrow,a,b} := \ppo_{\cone(\twovec{1}{0},\twovec{a}{b})}
\end{eqnarray*}

 In terms of open parallelepipeds, Lemma~\ref{lem:recursion} can now be phrased as follows. An example illustrating the somewhat involved expression in \ref{lem:recursion-parallelepiped}.1 is given in Figure~\ref{fig:parallelepiped-recursion}.

\begin{figure}[ht]
\begin{center}
\input{ppd_recursion_scaled.pstex_t}
\caption[]{\label{fig:parallelepiped-recursion}This figure illustrates the formula given in \ref{lem:recursion-parallelepiped}.1. $\pC{5}{13}$ is expressed in terms on $\pS{5}{3}$ (shown) and $\pC{5}{3}$ (the corners of $\pS{5}{3}$). The idea is the same as in Figure~\ref{fig:recursion} and Lemma~\ref{lem:recursion}. However there is one important difference: Both $\col_5(\pS{5}{3})$ and $\col_5(\pC{5}{3})$ are empty. These have to be added using the third term $\twovec{5}{13} + \{ \twovec{0}{-1}, \twovec{0}{-2 }\}$. }
\end{center}
\end{figure}

\begin{Lemma}
\label{lem:recursion-parallelepiped}
Let $a,b\in\NN$ and $A=\begin{pmatrix}1 & 0 \\ b\div a & 1\end{pmatrix}$.
\begin{enumerate}
\item If $0<a<b$ and $\gcd(a,b)=1$, then
  \begin{eqnarray*}
    \pS{a}{b} & = & A \pS{a}{b\mod a} \\
    \pC{a}{b} & = & A \pS{a}{b\mod a} + \{ \twovec{0}{0} ,\ldots, \twovec{0}{-(b\div a) +1}\} \\ 
                        && \cup \; A \pC{a}{b\mod a} + \twovec{0}{-b\div a}\\
                        && \cup \; \twovec{a}{b} + \{ \twovec{0}{-1} ,\ldots, \twovec{0}{-b\div a }\}.
  \end{eqnarray*}
\item $\pC{a}{b} = \orgrefl\,\diagrefl \pS{b}{a} +  \twovec{a}{b}$ and $\pS{a}{b} =  \orgrefl\,\diagrefl \pC{b}{a} + \twovec{a}{b}$.
\item $\pC{a}{1}  =  \emptyset$ and $\pS{a}{1}  =  \{ \twovec{1}{0},\ldots,\twovec{a-1}{0} \}$.
\end{enumerate}
\end{Lemma}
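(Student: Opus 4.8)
The plan is to treat the three parts in order of increasing difficulty, reducing everything to the staircase recursion of Lemma~\ref{lem:recursion} together with the dictionary
\[
\pC{a}{b}\cup\{\twovec{0}{0}\} = \hpipe{a}{b}{}\cap(\RR\times[0,b)),\qquad \pS{a}{b}\cup\{\twovec{0}{0}\} = \vpipe{a}{b}{}\cap([0,a)\times\RR)
\]
between open parallelepipeds and one period of the pipes. For part~3 I would parametrize $\ppo$ directly: a lattice point of $\pC{a}{1}$ must have the form $\alpha_1\twovec{1}{0}+\alpha_2\twovec{a}{1}$ with $0<\alpha_i<1$, whose second coordinate $\alpha_2$ is never an integer, so $\pC{a}{1}=\emptyset$; the same parametrization of $\pS{a}{1}=\ppo_{\cone(\twovec{0}{-1},\twovec{a}{1})}$ forces the first coordinate into $\{1,\dots,a-1\}$ and the integral second coordinate into $0$, giving the stated set.

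For part~2 I would imitate Lemmas~\ref{lem:diag-reflection} and~\ref{lem:org-reflection}. The map $\orgrefl\,\diagrefl$ is the linear lattice automorphism $(x,y)\mapsto(-y,-x)$, and it sends the generators $\twovec{0}{-1},\twovec{b}{a}$ of the cone defining $\pS{b}{a}$ to $\twovec{1}{0}$ and $-\twovec{a}{b}$. Writing a generic point of $\pS{b}{a}$ as $\alpha_1\twovec{0}{-1}+\alpha_2\twovec{b}{a}$ with $0<\alpha_i<1$, applying $\orgrefl\,\diagrefl$ and translating by $\twovec{a}{b}$ produces $\alpha_1\twovec{1}{0}+(1-\alpha_2)\twovec{a}{b}$; the substitution $\beta_1=\alpha_1$, $\beta_2=1-\alpha_2$ bijects $(0,1)^2$ and identifies this with the defining form of $\pC{a}{b}$. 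As $\orgrefl\,\diagrefl$ and the translation are lattice automorphisms, this is a bijection of lattice sets, and the second identity is entirely symmetric (now $\twovec{1}{0}\mapsto\twovec{0}{-1}$).

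Part~1 is the heart of the lemma. The equality $\pS{a}{b}=A\pS{a}{b\mod a}$ is the verbatim parallelepiped analogue of Lemma~\ref{lem:reduction-bijection}: applying $A\colon(x,y)\mapsto(x,(b\div a)x+y)$ to the defining inequality $0<\tfrac{b\mod a}{a}x-y<1$ turns it into $0<\tfrac{b}{a}x-y<1$ while fixing the column range $0<x<a$. For the formula for $\pC{a}{b}$ I would read $\pC{a}{b}$, via the dictionary and $S_{a,b}=\hpipe{a}{b}{}$ (steep, by Fact~\ref{fact:pipes}), as the staircase $S_{a,b}$ restricted to the strip $0\le y<b$ with the origin removed, and argue column by column. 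For $x\in\{1,\dots,a-1\}$ the whole column $\col_x(S_{a,b})$ lies strictly inside the strip and misses the origin, so these columns of $\pC{a}{b}$ coincide with those of $S_{a,b}$; feeding them into Lemma~\ref{lem:recursion}.1 and using that columns $1,\dots,a-1$ of $S_{a,b\mod a}$ are exactly $\pS{a}{b\mod a}$ and of $C_{a,b\mod a}$ are exactly $\pC{a}{b\mod a}$ yields the first two terms (the vertical shifts commute with restricting to a set of columns, since $A$ preserves columns). The column $x=a$ is the genuinely new feature: by Fact~\ref{fact:toppoint} its top point is $(a,b)$, which the strip $y<b$ cuts off, and as this column is long (it repeats column $0$, where the origin is a corner), exactly $b\div a$ points survive, namely $\twovec{a}{b}+\{\twovec{0}{-1},\dots,\twovec{0}{-b\div a}\}$; since $\pS{a}{b\mod a}$ and $\pC{a}{b\mod a}$ carry no data about column $a$, this piece must be supplied as the explicit third term. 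Columns outside $\{1,\dots,a\}$ of $\pC{a}{b}$ are empty, which finishes the set equality.

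The main obstacle is the bookkeeping hidden in ``columns $1,\dots,a-1$ of $C_{a,b\mod a}$ are exactly $\pC{a}{b\mod a}$''. Here $\pC{a}{b\mod a}$ is, by the dictionary, a restriction of the flat corner set $C_{a,b\mod a}$ to the \emph{rows} $0\le y<b\mod a$, whereas the recursion forces me to restrict by \emph{columns}. I would reconcile these by noting that in a flat staircase the $b\mod a$ corners of one period occupy pairwise distinct rows $0,\dots,b\mod a-1$ and pairwise distinct columns, with the origin being simultaneously the unique corner in row $0$ and in column $0$ (it is a corner because $B_{a,b\mod a}(0)=1$ by Fact~\ref{fact:interpretation}). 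Deleting the origin then identifies the row restriction with the column restriction, both equal to $\pC{a}{b\mod a}$; the easier companion statement for $\pS{a}{b\mod a}$ only needs that column $0$ of the flat staircase is the single point $\twovec{0}{0}$. Once this identification is in place, the three-term decomposition of $\pC{a}{b}$ is precisely Lemma~\ref{lem:recursion}.1 read one column at a time, with the boundary column $a$ accounting for the extra summand.
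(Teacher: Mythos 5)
The paper does not actually spell out a proof of this lemma---it says only that ``the proof is similar to the one of Lemma~\ref{lem:recursion} and we omit it for brevity''---and your proposal is correct and fills in precisely that route: parts 2 and 3 by direct parametrization of the open parallelepipeds, and part 1 by reducing to Lemma~\ref{lem:recursion}.1 via the dictionary between open parallelepipeds and one period of the pipes. In particular, you correctly isolate and resolve the two points where the parallelepiped statement genuinely differs from the staircase recursion: the boundary column $x=a$, which produces the third term $\twovec{a}{b}+\{\twovec{0}{-1},\ldots,\twovec{0}{-b\div a}\}$ (exactly the difference highlighted in Figure~\ref{fig:parallelepiped-recursion}), and the reconciliation of the row-restriction that defines $\pC{a}{b\mod a}$ with the column-restriction forced by the recursion, using that the corners of one period of a flat staircase occupy pairwise distinct rows and columns with the origin as the unique corner in row $0$ and column $0$.
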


This allows us describe $\pS{a}{b}$ in terms of $\pS{a}{b\mod a}$ and $\pC{a}{b\mod a}$. The proof is similar to the one of Lemma~\ref{lem:recursion} and we omit it for brevity.

\vspace{1em}

\paragraph{Recursive Description of Triangles.} We conclude this section by giving a similar recursion for triangles, see Figure~\ref{fig:triangle-recursion}. We write 
\[
\Delta_{a,b}:=\conv\left\{\twovec{0}{0},\twovec{a}{0},\twovec{a}{b}\right\}
\]
and 
\[
\Delta'_{a,b}:=\Delta_{a,b}\setminus\conv\left\{\twovec{0}{0},\twovec{a}{b}\right\}
\]
 to denote closed and half-open triangles, respectively. The corresponding lattice point sets are denoted by $T_{a,b}:=\Delta_{a,b}\cap \ZZ^2$ and $T'_{a,b}:=\Delta'_{a,b}\cap \ZZ^2$.

\vspace{1em}
The idea is now that for $0<a<b$ the triangle $\Delta_{a,b}$ can be decomposed into two parts $\Delta'_{a,(b\div a)a}$ and $A\Delta_{a,b\mod a}$. The former is defined by a line with integral slope and hence the set of lattice points $T'_{a,(b\div a)a}$ is easy to describe. The latter can be transformed into $\Delta_{b\mod a,a}$ and we can obtain a description of the lattice point set $T_{b\mod a,a}$ recursively. See Figure~\ref{fig:triangle-recursion}. The resulting recursion is given in Lemma~\ref{lem:recursion-triangle} without proof.

\begin{figure}[ht]
\begin{center}
\input{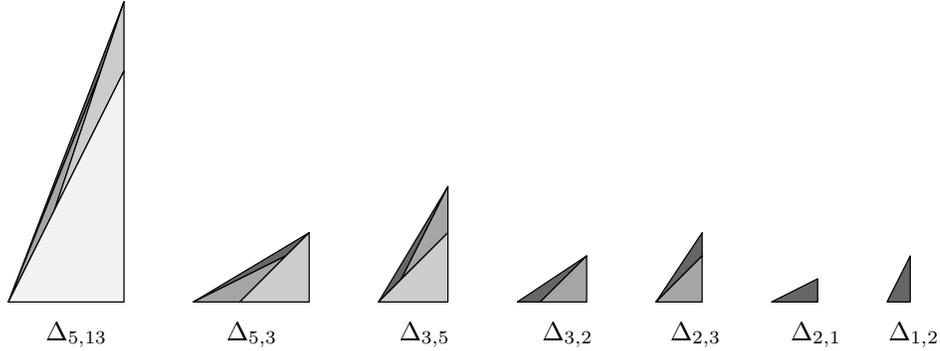}
\caption{\label{fig:triangle-recursion}Similarly to our recursive description of $S_{5,13}$ (see Figure~\ref{fig:recursion}), we can apply Lemma~\ref{lem:recursion-triangle} recursively to partition $\Delta_{5,13}$ into triangles with integral slope. The different shadings indicate which triangle the different regions correspond to.}
\end{center}
\end{figure}

\begin{Lemma}
\label{lem:recursion-triangle}
Let $a,b\in\NN$ and $A=\begin{pmatrix}1 & 0 \\ b\div a & 1\end{pmatrix}$.
\begin{enumerate}
\item $T_{a,b} = A T_{a,b\mod a} \cup T'_{a,(b\div a)a}$.
\item $T_{a,b}=\orgrefl\,\diagrefl T_{b,a} + \twovec{a}{b}$.
\item $T_{a,1}=\set{\twovec{k}{0}}{0\leq k\leq a} \cup \{ \twovec{a}{1}\}$.
\item If $k\in\NN$, then 
\begin{eqnarray*}
T'_{a,ka} &=& \bigcup_{0<l\leq a} \left\{ \twovec{l}{0},\ldots,\twovec{l}{lk-1} \right\} \\
&= & \set{\twovec{0}{m}}{m\in\NN} + \{\twovec{0}{0},\twovec{1}{0},\ldots,\twovec{a}{0}\}\\
  &&   \setminus \left(\set{\twovec{0}{m}}{m\in\NN} + \{\twovec{0}{0},\twovec{1}{k},\ldots,\twovec{a}{ak}\}\right).
\end{eqnarray*}
\end{enumerate}
\end{Lemma}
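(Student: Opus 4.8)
The plan is to establish the four parts independently, in each case reducing the claimed identity of lattice-point sets to the corresponding identity of the underlying triangular regions and then intersecting with $\ZZ^2$; this is legitimate because $A$, $\orgrefl$, $\diagrefl$ and integer translations are all lattice transformations, hence restrict to bijections of $\ZZ^2$ and commute with the operation $(-)\cap\ZZ^2$. Throughout I write $q=b\div a$ and $r=b\mod a$, so that $b=qa+r$ with $0\le r<a$, and I use the half-plane description $\Delta_{a,b}=\set{(x,y)}{0\le y\le \frac{b}{a}x,\ x\le a}$ (from which $x\ge 0$ follows automatically).

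Part 1 is the main step. The idea is that the line $y=qx$ of integral slope $q$ splits $\Delta_{a,b}$ into a lower triangle $\Delta_{a,qa}$, with vertices $(0,0),(a,0),(a,qa)$, and an upper triangle equal to $A\Delta_{a,r}$. To see the latter I compute the images of the vertices of $\Delta_{a,r}$ under the shear $A:(x,y)\mapsto(x,qx+y)$, obtaining $(0,0),(a,qa),(a,b)$; since $A$ is affine it carries the convex hull $\Delta_{a,r}$ onto the triangle with these vertices, whose hypotenuse is the original segment $\conv\{(0,0),(a,b)\}$. Both triangles lie in $\Delta_{a,b}$, so $A T_{a,r}$ and $T'_{a,qa}$ are contained in $T_{a,b}$. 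For the reverse inclusion I take a lattice point $z=(x,y)\in T_{a,b}$ and distinguish two cases by its position relative to $y=qx$: if $y<qx$ then $z$ lies strictly below the divider, hence off the removed hypotenuse of $\Delta_{a,qa}$, so $z\in T'_{a,qa}$; if $y\ge qx$ then $A^{-1}z=(x,y-qx)$ satisfies $0\le y-qx$ and $x\le a$, and the equivalence $y-qx\le\frac{r}{a}x\Leftrightarrow y\le\frac{qa+r}{a}x=\frac{b}{a}x$ shows $A^{-1}z\in T_{a,r}$, so $z\in A T_{a,r}$. This case distinction is exactly where the arithmetic $b=qa+r$ enters.

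For Part 2 I would note that $\orgrefl\,\diagrefl$ is the linear map $(x,y)\mapsto(-y,-x)$ and compute that it sends the vertices $(0,0),(b,0),(b,a)$ of $\Delta_{b,a}$ to $(0,0),(0,-b),(-a,-b)$; translating by $\twovec{a}{b}$ then yields $(a,b),(a,0),(0,0)$, the vertices of $\Delta_{a,b}$. Hence the composite affine lattice map carries $\Delta_{b,a}$ onto $\Delta_{a,b}$, and intersecting with $\ZZ^2$ gives the claim. Part 3 I would verify directly: for $b=1$ the constraint $0\le y\le\frac{1}{a}x\le 1$ forces $y\in\{0,1\}$, with the row $y=0$ contributing $(0,0),\dots,(a,0)$ and the row $y=1$ only $(a,1)$. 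Part 4 is a description of the half-open triangle of integral slope $k$: the column $x=l$ contributes precisely the lattice points with $0\le y<lk$, giving the first (union) expression, and the second expression follows by recognising $\set{(0,m)}{m\in\NN}+\{(0,0),\dots,(a,0)\}$ as the vertical strip $0\le x\le a,\ y\ge 0$ and subtracting the translate recording the points on or above $y=kx$.

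The only genuinely delicate point is the bookkeeping of the shared edge in Part 1, which must be counted exactly once. The half-open triangle $T'_{a,qa}$ is designed precisely to omit the divider (its removed hypotenuse is $\conv\{(0,0),(a,qa)\}$), while $A T_{a,r}$ retains it as the image of the bottom edge of $\Delta_{a,r}$; consequently the two cases $y<qx$ and $y\ge qx$ partition $T_{a,b}$ with no overlap. I expect verifying this coverage-and-disjointness, rather than any computation, to be the step that needs the most care.
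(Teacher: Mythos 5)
Your proof is correct and follows exactly the route the paper intends: the paper states Lemma~\ref{lem:recursion-triangle} without proof, but the preceding paragraph sketches precisely your decomposition of $\Delta_{a,b}$ into the half-open integral-slope triangle $\Delta'_{a,(b\div a)a}$ below the line $y=(b\div a)\,x$ and the sheared copy $A\Delta_{a,b\mod a}$ above it, with parts 2--4 being direct verifications. Your case split $y<(b\div a)x$ versus $y\ge (b\div a)x$, together with the observation that affine lattice transformations commute with intersecting by $\ZZ^2$, correctly supplies the details the paper omits.
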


The advantage of using the second expression for $T'_{a,ka}$ in~\ref{lem:recursion-triangle}.4 will become clear in Section~\ref{sec:short-representations} where we use it to obtain a short rational function representing the generating function of the set of lattice points inside $T'_{a,ka}$. Note that we obtain a recursion formula for $T'$ by replacing every occurrence of $T$ in~\ref{lem:recursion-triangle}.1 and~\ref{lem:recursion-triangle}.2 with $T'$ and replacing~\ref{lem:recursion-triangle}.3 with $T'_{a,1}=\set{\twovec{k}{0}}{1\leq k\leq a}$.

In \cite{KNA94} Kanamaru et al.\ use a recursive procedure as in Lemma~\ref{lem:recursion-triangle} to give an algorithm to enumerate the set of lattice points \emph{on} a line segment. They go on to give an algorithm that enumerates lattice points inside triangles using the transformation $A$, however in this case they do not apply recursion and do not mention the partition given in Lemma~\ref{lem:recursion-triangle}.1. This partition however is observed by Balza-Gomez et al.\ in \cite{BMM99}. But as they are interested in giving an algorithm for computing the convex hull of lattice points strictly below a line segment, they do not work with the full set of lattice points $T_{a,b}$. In both cases no explicit recursion formula such as Lemma~\ref{lem:recursion-triangle} is given.



\section{Characterizations of Sturmian Sequences}
\label{sec:characterization}

In this section we state several characterizations of Sturmian sequences of rational numbers, i.e.\ sequences of the form $\red{B}_{a,b}$ (or equivalently $B_{a,b}$ with $0<b\le a$). We will first motivate each characterization in a separate paragraph without proofs and then summarize them in Theorem~\ref{thm:equivalences}. The proof of the theorem occupies Section~\ref{sec:theproof}.

\paragraph*{Recursive Structure.} The most important characterization of Sturmian sequences for our purposes is a recursive one. It is based on the concept of reduction presented in Section~\ref{sec:geometricidea} that relates $\red{B}_{a,b}$ to $\red{B}_{a,b \mod a}$ in a way reminiscent of the Euclidean Algorithm.

We first present the idea informally. Let $0<a<b$ and consider the sequence $B_{b,a}$ and the related staircase $S_{b,a}$. An interval of $B_{b,a}$ of the form $10\ldots0$ of length $k$ corresponds to a corner $c\in S_{b,a}$ and $k-1$ points in $S_{b,a}$ at the same height as $c$. We call a maximal interval of the form $10\ldots0$ a block. A block of $B_{b,a}$ corresponds to a row of $S_{b,a}$. If the block has length $k$, the row contains $k$ points. The \defn{block sequence} $m(B_{b,a})$ of $B_{b,a}$ is the sequence of block lengths of $B_{b,a}$. By the above observation the block sequence of $B_{b,a}$ is the row sequence of $S_{b,a}$ and by Lemma~\ref{lem:recursion}.2 and Corollary~\ref{cor:reversal} the column sequence of $\;\orgrefl\diagrefl S_{b,a}=S_{a,b}$ \emph{up to shift}. This means that $m(B_{b,a})\equiv B_{a,b}$ is Beatty and hence $\red{m}(B_{b,a})\equiv B_{a,b\mod a}$ is Sturmian. See Figure~\ref{fig:example-recursive} for an example. It turns out that this gives a recursive characterization of Sturmian sequences: a sequence $s$ is Sturmian iff $m(s)$ is balanced and $\red{m}(s)$ is Sturmian.

\begin{figure}[hbt]
\begin{center}
 \includegraphics[scale=0.3]{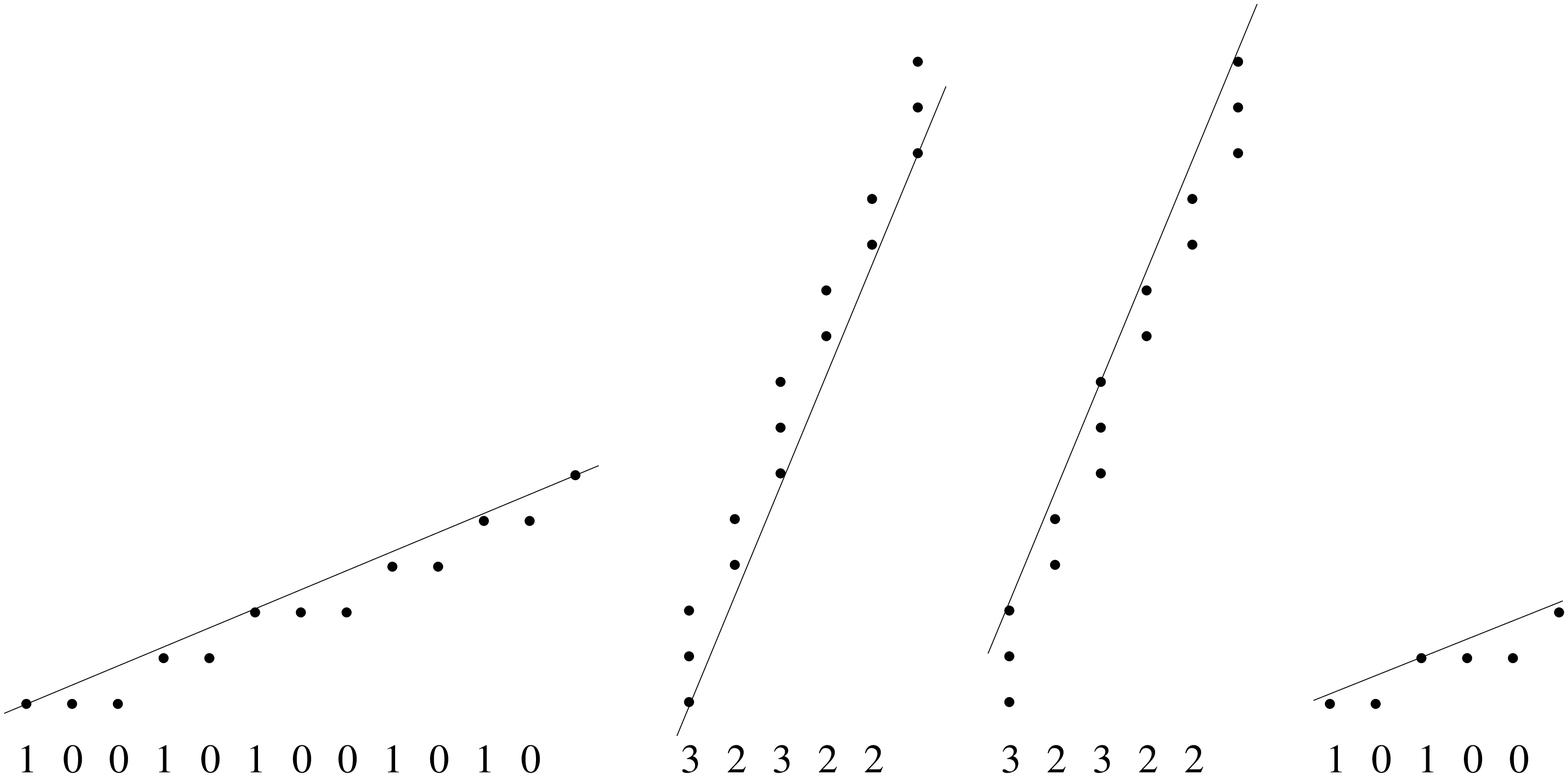}
\end{center}
\caption{\emph{Recursive structure of Sturmian sequences.} This figure shows several staircases $S_{a,b}$ and the corresponding parts of the associated Beatty sequence $B_{a,b}$. In each picture the first and last column correspond to the same element of $B_{a,b}$ modulo the minimal period. The first picture shows $S_{12,5}$ and $B_{12,5}$. $B_{12,5}$ records which columns contain corners, so the block sequence $m(B_{12,5})$ is just the row sequence of $S_{12,5}$. Reflecting at the main diagonal turns rows into columns, so $m(B_{12,5})$ is the column sequence of $\diagrefl S_{12,5}= S^-_{5,12}$, which is shown in the second picture. Note that in the first picture the part of the staircase that we show was chosen such that points from the first and the last column lie on the defining line - and this property is preserved under reflection at the main diagonal. Applying the reflection at the origin gets us to $\orgrefl\diagrefl S_{12,5}=S_{5,12}$, which is shown in the third picture along with its column sequence $B_{5,12}$. The reflection at the origin, however, reverses the column sequence. Fortunately Corollary~\ref{cor:reversal} tells us that Sturmian sequences are invariant under reversals - up to a shift. This shift in the column sequence can be seen from the fact that in the third picture, the defining line does not pass through points of the first or last column anymore. From the first three pictures, we see that the column sequence of $S_{5,12}$ is just the row sequence of $S_{12,5}$ up to shift, i.e.\ $m(B_{12,5})\equiv B_{5,12}$. Now we can apply reduction. We pass from $S_{5,12}$ to $S_{5,2}$, as shown in the last picture, which gives us $\red{m}(B_{12,5})\equiv B_{5,2}$. This is a geometric illustration of the combinatorial fact that if $s$ is Sturmian, so is $\red{m}(s)$.}
\label{fig:example-recursive}
\end{figure}

\vspace{0.5em}

To make this precise we give the following definitions. Suppose $s=(s_n)_{n\in\ZZ}$ is a periodic 0,1-sequence with $\PP(s)=a$ and $\ones(s)=b$. Without loss of generality we can assume that $s_{0}=1$. Let $i_1,\ldots,i_b\in\{0,\ldots,a-1\}$ be the indices of the 1s, i.e.\ let $i_1<i_2<\ldots<i_b$ with $s_{i_j}=1$ for all $1\leq j \leq b$. Put $i_{b+1}:=a$. Then the $j$-th block in $(s_0,\ldots,s_{a-1})$ is $\interval{s}{i_{j}}{i_{j+1}-1}$ and the length of the $j$-th block is $m_j:=i_{j+1} - i_{j}$. The \defn{block sequence} $m(s)$ is the infinite periodic sequence generated by $(m_1,\ldots,m_b)$. Note that this definition determines $m(s)$ only up to shift, which suffices for our purposes. On equivalence classes of sequences up to shift, $m$ is an injective function, i.e.\ $s_1\equiv s_2 \lrAr m(s_1)\equiv m(s_2)$. We call a sequence \defn{block balanced} if it is balanced and its block sequence is balanced. In this case we can consider the reduced block sequence $\red{m}(s)$ which is again a $0,1$-sequence. A sequence $s$ is \defn{recursively balanced} 
\begin{itemize}
\item if $\ones(s)=1$, or
\item if $s$ is block balanced and $\red{m}(s)$ is recursively balanced.
\end{itemize}
The characterization now is this:

\begin{center}
 {\it{A periodic 0,1-sequence is Sturmian if and only if it is recursively balanced.}}
\end{center}

\paragraph*{Even Distribution of 0s and 1s.} Common sense suggests that, as the staircase approximates a line, the 0s and 1s of the Sturmian sequence should be distributed as evenly as possible. The actual number of 1s in every interval should be as close as possible to the expected number of 1s. This can be made precise in the following way. On an interval of length $l$, a line with slope $\frac{b}{a}$ increases by $\frac{b}{a}l$. So the expected number of 1s in an interval of length $l$ of $\red{B}_{a,b}$ is $\frac{b}{a}l$, if $b<a$, and $\frac{b \mod a}{a}l$ in general. As $\frac{b \mod a}{a}l$ is in general not an integer, the best that can be hoped for is that for every interval $I$ of length $l$ the number of 1s contained in $l$ is either $\floor{\frac{b \mod a}{a}l}$ or $\ceil{\frac{b \mod a}{a}l}$, and indeed this is a necessary and sufficient characterization of Sturmian sequences. Formally, we say that the 1s in a periodic 0,1-sequence $s$ are \defn{evenly distributed} if for every interval $\interval{s}{x_0}{x_1}$
\begin{eqnarray}
\label{eqn:evenly-def-1}
\txtones(\interval{s}{x_0}{x_1}) & \in & \left\{ \floor{\frac{\ones(s)}{\PP(s)}\txtlength(\interval{s}{x_0}{x_1})}, \ceil{\frac{\ones(s)}{\PP(s)}\txtlength(\interval{s}{x_0}{x_1})}   \right\}.
\end{eqnarray}
Note that if $z\in\ZZ$ and $r\in\RR$, then $z\in\left\{ \floor{r}, \ceil{r} \right\}$ if and only if $z-1 < r < z+1$, so the condition
\begin{eqnarray}
\label{eqn:evenly-def-2}
\txtones(\interval{s}{x_0}{x_1}) -1\quad < & \frac{\ones(s)}{\PP(s)}\txtlength(\interval{s}{x_0}{x_1}) & < \quad \txtones(\interval{s}{x_0}{x_1}) +1
\end{eqnarray}
is equivalent to (\ref{eqn:evenly-def-1}). If an interval $\interval{s}{x_0}{x_1}$ violates the left-hand inequality, then we say it contains too many 1s and if it violates the right-hand inequality, we say it contains too few 1s. The characterization, then, is this.

\begin{center}
 {\it{A periodic $0,1$-sequence $s$ is Sturmian if and only if the 1s in $s$ are evenly distributed.}}
\end{center}

This characterization appears in \cite{GLL78} and was later improved in \cite{Fraenkel05}.

\paragraph*{Symmetry.} A different way to phrase that the 0s and 1s are distributed evenly would be to state that Sturmian sequences are symmetric. If symmetric is taken to mean invariant under reversals (up to shift), then this is a true statement (Corollary~\ref{cor:reversal}) - but insufficient to characterize Sturmian sequences. 

However, Lemma~\ref{lem:translation} suggests a different notion of symmetry. 
If we start with a flat staircase $S_{a,b}$ with $0<b<a$ and move the defining line downwards by a small amount, the resulting staircase $S_{a,b,r}$ will be a translate of $S_{a,b}$. Hence their column sequences are identical up to shift. But using Lemma~\ref{lem:dist-to-line}, we see that if $0>r\ge-\frac{1}{a}$ the only columns that differ are $\col_{ka}$ for $k\in\ZZ$: the single point in these columns has been moved down by one. The columns $\col_{ka}$ do not contain a corner anymore, whereas the columns $\col_{ka+1}$ do. In the Sturmian sequence, this translates to taking the corresponding interval $1,0$ and replacing it with the interval~$0,1$. 

\vspace{1em}
This observation gives rise to the notion of swap symmetry. A periodic $0,1$-sequence $s$ is swap symmetric if there is a pair $(s_i,s_{i+1})=(1,0)$ such that if we replace this pair and all periodic copies of it by $(0,1)$, we obtain a sequence $s'$ that is identical to $s$ up to a shift. See Figure~\ref{fig:examples-symmetry} for an example. Formally, given a periodic sequence $s$ and $i\in\ZZ$, we define the sequence $\swap{s}{i}:=(\swap{s}{i}_n)_n$ by
$$\swap{s}{i}_n = 
\begin{cases} 
s_n - 1& \text{if $n\equiv i \mod \PP(s)$} \\
s_n + 1& \text{if $n\equiv i+1 \mod \PP(s)$} \\
s_n & \text{otherwise}
\end{cases}.
$$
We call a periodic sequence $s$ \defn{swap symmetric} if there exists an $i\in\ZZ$ such that $s\equiv \swap{s}{i}$. Note that if $s \equiv \swap{s}{i}$ then 
\begin{eqnarray*}
s_i &=& \swap{s}{i}_{i+1} \quad = \quad s_{i+1}+1\\
s_{i+1} &=& \swap{s}{i}_{i}\quad = \quad s_{i}-1
\end{eqnarray*}
as the number of entries $0\leq k<\PP(s)$ such that $s_k=c$ cannot change under a swap for any constant $c\in\NN$, for swap symmetric $s$.

\begin{figure}
\begin{center}
 \includegraphics[scale=0.5]{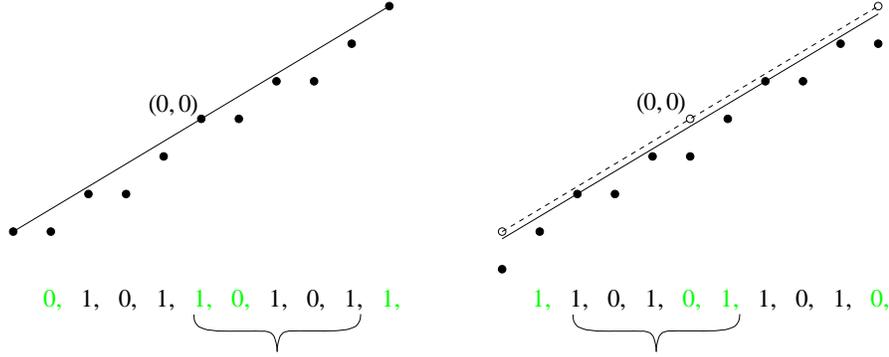}
\end{center}
\caption{\emph{Swap symmetry of Stumian sequences.} Consider the staircase $S_{5,3}$ and the corresponding Sturmian sequence $B_{5,3}$, which records the columns of $S_{5,3}$ that contain a corner. If we move the line defining $S_{5,3}$ downwards by a small amount, then only those columns change in which there was a point on the line $L_{5,3}$. These columns do not contain corners any more - the corners move one column to the right. So in the sequence $B_{5,3}$ this corresponds to replacing an interval 10 with 01, i.e.\ by swapping a 1 and a 0. The digits that are swapped are highlighted in the figure. Now, by Lemma~\ref{lem:translation} translating the defining line only shifts the column sequence of $C_{5,3}$. So the sequence obtained from $B_{5,3}$ by swapping is again $B_{5,3}$ up to shift. This is shown by the braces, which indicate minimal periods of both sequences.}
\label{fig:examples-symmetry}
\end{figure}

This property characterizes Sturmian sequences:

\begin{center}
 {\it{A periodic 0,1-sequence is Sturmian if and only if it is swap-symmetric.}}
\end{center}

\vspace{1em}
Having motivated the three characterizations, we can now state the theorem.

\begin{Thm}\label{thm:equivalences}
Let $s=(s_n)_{n\in\ZZ}$ be a periodic 0,1-sequence with  $\PP(s)=a$ and $\ones(s)=b\geq 1$. Then the following are equivalent:
\begin{enumerate}
\renewcommand{\theenumi}{{\it (\roman{enumi})}}
\renewcommand{\labelenumi}{(\roman{enumi})}
\item \label{eq:sturmian} $s\equiv\red{B}_{a,b}$.
\item \label{eq:rec-bal} $s$ is recursively balanced.
\item \label{eq:even} The 1s in $s$ are evenly distributed.
\item \label{eq:symmetry} $s$ is swap symmetric.
\end{enumerate}
\end{Thm}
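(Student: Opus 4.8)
The plan is to prove the four-way equivalence by establishing a cycle of implications, using the recursive characterization as the central hub since it is the one most directly tied to the geometric reduction machinery of Lemma~\ref{lem:recursion} and the number-theoretic content of Lemma~\ref{Lem:reduction}. Concretely, I would prove \eqref{eq:sturmian} $\Rightarrow$ \eqref{eq:rec-bal} $\Rightarrow$ \eqref{eq:even} $\Rightarrow$ \eqref{eq:sturmian} as a first cycle establishing the equivalence of the first three conditions, and then handle swap symmetry \eqref{eq:symmetry} separately by showing \eqref{eq:sturmian} $\Leftrightarrow$ \eqref{eq:symmetry}. The base case throughout is $\ones(s)=1$: here $s\equiv\red B_{a,1}=B_{a,1}$ is the sequence with a single $1$ per period, which is simultaneously Sturmian, recursively balanced (by definition), trivially evenly distributed, and swap symmetric, so all four conditions coincide and the induction can start.

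The heart of the argument is the recursion. For \eqref{eq:sturmian} $\Rightarrow$ \eqref{eq:rec-bal} I would induct on $\ones(s)=b$. Given $s\equiv\red B_{a,b}$ with $b\geq 2$, the key identity is that the block sequence of a Sturmian sequence recovers a smaller Beatty/Sturmian sequence: as explained in the motivating paragraph, $m(\red B_{a,b})\equiv m(B_{b',a})$ reflects, via Lemma~\ref{lem:recursion}.2 and Corollary~\ref{cor:reversal}, the passage from the row sequence of $S_{b,a}$ to the column sequence of $S_{a,b}$, giving $m(s)\equiv B_{a,b}$ up to the balancing constant and hence $\red m(s)\equiv \red B_{a,b\bmod a}=B_{a,b\bmod a}$, which is Sturmian with strictly fewer than $b$ ones. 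Then $s$ is block balanced because $m(s)$, being a Beatty sequence, is balanced by Lemma~\ref{Lem:beatty-is-balanced}, and $\red m(s)$ is recursively balanced by the induction hypothesis; so $s$ is recursively balanced. The reverse direction, needed inside the cycle, runs the same identity backwards using the converse half of Lemma~\ref{Lem:reduction}: if $\red m(s)$ is Sturmian and $m(s)$ is balanced at some $k$, then $m(s)=B_{a,ak+(b\bmod a)}$ is Beatty, and reconstructing $s$ from its block sequence inverts the operation $m$ (which is injective up to shift), yielding $s\equiv\red B$ of the appropriate parameters.

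For the even-distribution equivalence I would translate \eqref{eqn:evenly-def-2} into the statement that $|\,\txtones(\interval{s}{x_0}{x_1})-\frac{b}{a}\txtlength|<1$, and recognize that for $s=\red B_{a,b}$ the partial sums of the Sturmian sequence are exactly $\floor{\frac{b\bmod a}{a}x}$ differences, so that $\txtones(\interval{s}{0}{x})=\floor{\frac{b\bmod a}{a}x}$ telescopes; the even-distribution bound then follows immediately from $|\{r\}|<1$ applied to \eqref{eqn:Beatty-2}. For the converse \eqref{eq:even} $\Rightarrow$ \eqref{eq:sturmian}, I would argue that the even-distribution condition forces every partial sum to equal $\floor{\frac{b}{a}x}$ (the only integer in the admissible window once one fixes a starting point on the line), which pins down $s$ to be $\red B_{a,b}$ up to shift; care is needed because the window $\{\floor{r},\ceil{r}\}$ has width two when $r\notin\ZZ$, so one must use the condition across \emph{all} intervals simultaneously, not just prefixes, to eliminate the spurious alternative.

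The swap-symmetry equivalence I would derive most cleanly from the geometry already laid out: \eqref{eq:sturmian} $\Rightarrow$ \eqref{eq:symmetry} follows from Lemma~\ref{lem:translation}.1 exactly as in the motivating paragraph, since translating the defining line of a flat staircase $S_{a,b}$ down by a small amount $r\in(-\tfrac1a,0]$ turns a $10$ into a $01$ in the column sequence of $C_{a,b}$ yet yields a translate of the original, i.e.\ the swapped sequence is equal up to shift. For \eqref{eq:symmetry} $\Rightarrow$ \eqref{eq:even} (closing into the already-established block), I would show that a single swap changes any prefix count by at most one and argue that swap symmetry bootstraps into the even-distribution bound on all intervals by iterating swaps and tracking how the discrepancy $\txtones-\frac{b}{a}\txtlength$ can accumulate. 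I expect this last implication to be the main obstacle: unlike the other steps, it has no direct staircase translation once we have abstracted away to an arbitrary swap-symmetric $0,1$-sequence, so one must extract the evenness purely combinatorially from the existence of a single shift-realizing swap, presumably by showing that swap symmetry propagates a balancing property through the block structure and then invoking the recursive characterization already proved. An alternative, possibly cleaner, route is to prove \eqref{eq:symmetry} $\Rightarrow$ \eqref{eq:rec-bal} directly by showing that a swap-symmetric sequence has a swap-symmetric reduced block sequence, feeding the recursion; I would pursue whichever of these makes the inductive descent cleanest.
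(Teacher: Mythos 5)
Your architecture overlaps heavily with the paper's: you prove (i)$\Leftrightarrow$(ii) via the block-sequence identity $m(B_{b,a})\equiv B_{a,b}$ together with Lemma~\ref{Lem:reduction}, you get (i)$\Rightarrow$(iii) by telescoping~(\ref{eqn:Beatty-2}), you get (i)$\Rightarrow$(iv) by translating the defining line down by $\frac{1}{a}$, and your ``alternative'' route for swap symmetry (a swap-symmetric $s$ has swap-symmetric block sequence, then feed the recursion) is exactly what the paper does. The genuine gap is your proposed implication (iii)$\Rightarrow$(i). You want to argue directly that even distribution forces the prefix sums to equal $\floor{\frac{b}{a}x}$ up to shift, and you yourself name the obstruction: each interval count is only confined to a window $\left\{\floor{r},\ceil{r}\right\}$ of width two, so no individual prefix sum is determined, and ``use the condition across all intervals simultaneously'' is a restatement of the difficulty, not an argument. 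Making this direct route rigorous is essentially the hard direction of the result in \cite{GLL78} and needs a real mechanism (e.g.\ an extremal choice of offset plus a consistency proof); nothing in your sketch supplies it. The paper sidesteps this entirely by proving (iii)$\Rightarrow$(ii) instead, by induction on $\ones(s)$: first, even distribution forces block balance (two blocks of lengths differing by at least $2$ yield two intervals of equal length whose 1-counts differ by $2$, which violates~(\ref{eqn:evenly-def-1})); second, the 1s in $\red{m}(s)$ are evenly distributed, proved by contradiction -- an interval $m'$ of $\red{m}(s)$ with too many 1s is converted, via the correspondence between entries of $m(s)$ and blocks of $s$ and the bookkeeping identities $\txtlength(m')=\txtones(s')$, $\txtones(m')=\txtlength(s')-k\,\txtones(s')$, into an interval of $s$ with too few 1s. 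That conversion is the actual content of the hard direction, and your plan contains no equivalent of it.

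A secondary, smaller gap sits in your swap-symmetry fallback (iv)$\Rightarrow$(ii): before $\red{m}(s)$ is even defined you must know that $m(s)$ is balanced, and balancedness of swap-symmetric sequences is not free -- the paper needs a separate lemma for it (Lemma~\ref{lem:symmetric->balanced}, proved via the shift-invariant distance sum $d(s)$, which a swap would strictly increase if three distinct values occurred). Your phrase that swap symmetry ``propagates a balancing property through the block structure'' gestures at this but does not prove it; your primary route for (iv), tracking how discrepancies accumulate under iterated swaps, is likewise only a heuristic. If you adopt the paper's two lemmas (swap-symmetric $\Rightarrow$ block sequence swap-symmetric, and swap-symmetric $\Rightarrow$ balanced), the inductive descent you outline goes through verbatim.
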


{\ref{eq:sturmian}~$\rAr$~\ref{eq:even}} is easy to prove (see next section) and \ref{eq:even}~$\rAr$~\ref{eq:sturmian} appears in \cite{GLL78}, although the concept of ``nearly linear'' sequences used in \cite{GLL78} differs slightly from \ref{thm:equivalences}{\it(iii)}. The connection between these definitions\footnote{In \cite{Fraenkel05} sequences with an even distribution of 1s are called ``balanced''.} is made in \cite{Fraenkel05}, where the result from \cite{GLL78} is extended. In both cases the focus lies on the more general case of lines with irrational slope. The proofs given in these two sources differ from the proofs we present in Section~\ref{sec:theproof}. 

As far as we know the concepts of recursively balanced and swap symmetric sequences do not appear in the prior literature.



\section{Proof of the Characterizations}
\label{sec:theproof}

We first show the recursive characterization,  i.e.\ that a periodic 0,1-sequence is Sturmian if and only if it is recursively balanced. To that end we first prove two lemmas.

\begin{Lemma}
\label{lem:block-sequences}
If $0<a<b$, then $m(B_{b,a})\equiv B_{a,b}$.
\end{Lemma}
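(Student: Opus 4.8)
The plan is to prove $m(B_{b,a})\equiv B_{a,b}$ by unwinding the definitions geometrically, exactly along the lines already sketched in the text surrounding Figure~\ref{fig:example-recursive}. The key observation is that the block sequence of $B_{b,a}$ is, by construction, the \emph{row} sequence of the staircase $S_{b,a}$, and that reflecting $S_{b,a}$ turns rows into columns. So the whole argument is a chain of identifications between block lengths, row lengths, and column lengths, with the reflections $\diagrefl$ and $\orgrefl$ used to swap the roles of rows and columns and the roles of numerator and denominator.

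First I would recall that since $0<a<b$, the staircase $S_{b,a}$ is flat (by Fact~\ref{fact:pipes}, as the first index exceeds the second), so every column of $S_{b,a}$ contains exactly one point and $B_{b,a}$ is a $0,1$-sequence with $\PP(B_{b,a})=b$ and $\ones(B_{b,a})=a$ by Fact~\ref{fact:Beatty-period}. By Fact~\ref{fact:interpretation} the columns containing a corner are precisely those indexed by $n$ with $B_{b,a}(n)=1$. The crucial geometric fact to establish is that a maximal block $10\cdots 0$ of length $k$ in $B_{b,a}$, running from the index of one corner up to (but not including) the index of the next, corresponds exactly to a row of $S_{b,a}$ containing $k$ points: the leading $1$ marks the corner that is the left end of a row, and each subsequent $0$ marks a point lying in the same row (at the same height) as that corner, since in a flat staircase consecutive columns with $B_{b,a}=0$ keep the topmost point at the same height. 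Thus $m(B_{b,a})$ is precisely the row sequence $(|\row_y(S_{b,a})|)_y$, up to the shift inherent in the choice of starting block.

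Next I would transport this row sequence into a column sequence via the reflections. By Lemma~\ref{lem:diag-reflection}, $\diagrefl S_{b,a}=S^-_{a,b}$, and reflection at the main diagonal sends $\row_y$ to $\col_y$, so the row sequence of $S_{b,a}$ equals the column sequence of $S^-_{a,b}$. By Lemma~\ref{lem:translation}.\ref{lem:translation2}, $S^-_{a,b}$ is an integer translate of $S_{a,b}$ and hence has the same column sequence up to shift; equivalently one may apply $\orgrefl$ and Corollary~\ref{cor:reversal} to pass from $S^-_{a,b}$ to $S_{a,b}$, at the cost only of a reversal, which is itself absorbed into the $\equiv$ relation by Corollary~\ref{cor:reversal}. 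Finally, since $a<b$ the staircase $S_{a,b}$ is steep, so by Fact~\ref{fact:interpretation} its column sequence is exactly $(|\col_n(S_{a,b})|)_n = (B_{a,b}(n))_n$. Chaining these identifications, each valid up to shift, yields $m(B_{b,a})\equiv B_{a,b}$.

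The main obstacle I anticipate is the careful bookkeeping of the shifts, and in particular pinning down precisely that a maximal block of length $k$ corresponds to a row of exactly $k$ points rather than $k\pm 1$. One must verify that the leftmost point of each row of the flat staircase is a corner (so that the $1$s of $B_{b,a}$ line up with row-starts) and that a column with $B_{b,a}(n)=0$ indeed contributes a point at the \emph{same} height as the preceding corner, so that block length and row cardinality agree exactly. This is where the definitions of corner and of the block sequence must be matched literally; the figure makes it plausible, but a clean proof needs the statement that in a flat staircase the topmost point of $\col_n$ lies strictly below that of $\col_{n-1}$ precisely when $B_{b,a}(n)=1$, i.e.\ exactly at the corners, which follows from Fact~\ref{fact:toppoint} together with the definition of $B_{b,a}$. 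Once that correspondence is nailed down, everything else is a routine application of the reflection lemmas and Corollary~\ref{cor:reversal}.
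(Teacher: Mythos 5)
Your proposal is correct and follows essentially the same route as the paper's own proof: blocks of $B_{b,a}$ are identified with rows of the flat staircase $S_{b,a}$, then Lemma~\ref{lem:diag-reflection} turns rows into columns of $S^-_{a,b}$, Lemma~\ref{lem:translation} removes the sign at the cost of a shift, and Fact~\ref{fact:interpretation} identifies the column sequence of the steep staircase $S_{a,b}$ with $B_{a,b}$. The only difference is that you spell out the block--row correspondence (which the paper asserts in one line), which is a welcome addition rather than a deviation.
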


\begin{proof}
Let $0<a<b$. A maximal interval of $B_{b,a}$ of the form $1,0,\ldots,0$ corresponds to a row of $S_{b,a}$, so 
$$m(B_{b,a})\equiv (|\row_n(S_{b,a})|)_n = (|\col_n(S^-_{a,b})|)_n \equiv (|\col_n(S_{a,b})|)_n = B_{a,b}$$
where we use Lemma~\ref{lem:diag-reflection} in the second and Lemma~\ref{lem:translation} in the third step.
\end{proof}

From this we also get that for $a>b$ (i.e.\ flat staircases) the sequence $m(B_{a,b})$ is balanced, and thus Sturmian sequences are block balanced.

\begin{Lemma}
\label{lem:block-sequence-recursion}
If $s$ is a block balanced $0,1$-sequence, then $s$ is Sturmian if and only if $\red{m}(s)$ is Sturmian.
\end{Lemma}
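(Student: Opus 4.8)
The plan is to prove the equivalence by transporting everything, up to shift, through the correspondence between $s$, its block sequence $m(s)$, and the reduced block sequence $\red{m}(s)$ on the one hand, and the Beatty sequences $B_{a,b}$, $B_{b,a}$, $\red{B}_{b,a}$ on the other. Write $\PP(s)=a$ and $\ones(s)=b$, and assume $s_0=1$. The case $b=1$ is the base case (then $s\equiv B_{a,1}$ is Sturmian and $m(s)$ is the single-block sequence, so the statement degenerates), so I assume $b\ge2$; since $s$ is a $0,1$-sequence with minimal period $a$ this forces $b<a$. The three tools are Lemma~\ref{lem:block-sequences} (the block sequence of a flat Beatty sequence is a steep Beatty sequence), Lemma~\ref{Lem:reduction} together with its converse (reduction turns $B$ into $\red{B}$ and, given the balancing constant, back again), and the fact recorded just before the lemma that $m$ is injective on equivalence classes up to shift. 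Before invoking Sturmianness at all, note what block balancedness gives: $m(s)$ is balanced, has exactly $b$ terms per period summing to $a$, and since $\PP(s)=a$ is minimal and $b\ge2$ it cannot be constant (a constant block sequence would force $s$ to have a period smaller than $a$). Hence $m(s)$ is balanced at exactly $k:=a\div b=\floor{a/b}$, and $\red{m}(s)$ is a well-defined $0,1$-sequence with $(a\mod b)$ ones in every window of $b$ consecutive terms.

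For the forward direction, suppose $s$ is Sturmian. Since $b<a$ we have $s\equiv\red{B}_{a,b}=B_{a,b}$ by Lemma~\ref{Lem:reduction}. Applying $m$ (which respects $\equiv$) and using Lemma~\ref{lem:block-sequences} with the roles of the parameters exchanged, so that $m(B_{a,b})\equiv B_{b,a}$, yields $m(s)\equiv B_{b,a}$. As $B_{b,a}$ is balanced at $\floor{a/b}=k$, reducing both sides gives $\red{m}(s)\equiv\red{B}_{b,a}=B_{b,\,a\mod b}$, which is Sturmian by definition.

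For the converse, suppose $s$ is block balanced and $\red{m}(s)$ is Sturmian; I would run the same chain backwards. First pin down that $\red{m}(s)\equiv B_{b,\,a\mod b}$. Knowing this, the converse part of Lemma~\ref{Lem:reduction}, applied to $m(s)$ (balanced at $k$) with $\red{m}(s)\equiv B_{b,\,a\mod b}$, lifts it to $m(s)\equiv B_{b,\,bk+(a\mod b)}=B_{b,a}$. By Lemma~\ref{lem:block-sequences} we have $B_{b,a}\equiv m(B_{a,b})$, so $m(s)\equiv m(B_{a,b})$, and injectivity of $m$ up to shift gives $s\equiv B_{a,b}=\red{B}_{a,b}$, i.e.\ $s$ is Sturmian.

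The main obstacle is exactly the step ``$\red{m}(s)\equiv B_{b,\,a\mod b}$'' in the converse: the hypothesis only says $\red{m}(s)\equiv B_{b',c'}$ for its own minimal period $b'$ and one-count $c'$, and the work is to show $b'=b$ and $c'=a\mod b$ (equivalently $\gcd(a,b)=1$). This is where minimality of $\PP(s)$ is used: the minimal period $b'$ of $\red{m}(s)$ cannot be a proper divisor of $b$, for otherwise $m(s)$ would repeat every $b'$ blocks and $s$ itself would acquire a period $b'a/b<a$, contradicting $\PP(s)=a$. Once these minimal periods are reconciled, every arrow in the chain is a genuine equivalence and both directions follow together.
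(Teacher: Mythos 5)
Your proof is correct and takes essentially the same route as the paper's: the same chain $s\equiv B_{a,b} \lrAr m(s)\equiv B_{b,a} \lrAr \red{m}(s)\equiv B_{b,\,a\mod b}$, built from Lemma~\ref{lem:block-sequences}, Lemma~\ref{Lem:reduction} (with its converse), and injectivity of $m$ up to shift. The only difference is bookkeeping: the paper runs the chain with an existential quantifier over the parameters (``for some $0<a<b$''), whereas you pin them down as $\PP(s)$ and $\ones(s)$ and verify the matching in the converse direction (that the minimal period of $\red{m}(s)$ is $\ones(s)$, forcing coprimality of $a$ and $b$) via minimality of periods --- a detail the paper's proof leaves implicit.
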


\begin{proof}
As $s$ is a block balanced 0,1-sequence
\begin{eqnarray*}
&& \text{$s\equiv B_{b,a}$ for some $0<a<b$} \\
&\lrAr& \text{$m(s)\equiv B_{a,b}$ for some $0<a<b$} \\
&\lrAr& \text{$\red{m}(s)\equiv \red{B}_{a,b}$ for some $0<a<b$} 
\end{eqnarray*}
where the first equivalence holds by Lemma~\ref{lem:block-sequences} and the fact that $m$ is injective 
and the second equivalence holds by Lemma~\ref{Lem:reduction}.
\end{proof}

\begin{proof}[Proof of Theorem~\ref{thm:equivalences}:~\ref{eq:sturmian} $\lrAr$~\ref{eq:rec-bal}] The proof is by induction on $\ones(s)$. If $\ones(s)=1$, the statement holds. For the induction step, we have the following equivalences:
\begin{eqnarray*}
&& \text{$s$ is recursively balanced} \\
&\lrAr& \text{$s$ is block balanced and} \\
&& \text{$\red{m}(s)$ is recursively balanced} \\
&\lrAr& \text{$s$ is block balanced and} \\
&& \text{$\red{m}(s)$ is Sturmian} \\
&\lrAr& \text{$s$ is block balanced and} \\
&& \text{$s$ is Sturmian} \\
&\lrAr& \text{$s$ is Sturmian}
\end{eqnarray*}
Here we use Lemma~\ref{lem:block-sequence-recursion} in the third step. Note that the induction terminates in the case $\ones(s)=1$ since if $\ones(s)>1$, then $s$ has blocks of different sizes and so $\ones(s)>\ones(\red{m}(s))\geq 1$.
\end{proof}

Now we turn to the proof of the characterization that a periodic 0,1-sequence $s$ is Sturmian if and only if the 1s in $s$ are evenly distributed. One direction is easy to show.

\begin{proof}[Proof of Theorem~\ref{thm:equivalences}:~\ref{eq:sturmian} $\rAr$~\ref{eq:even}]
Let $B_{a,b}$ be a Sturmian sequence (i.e.\ $a>b$) and let $\interval{B_{a,b}}{x_0}{x_1}$ be any interval of $B_{a,b}$. 
Using (\ref{eqn:Beatty-2}) and the fact that 
$$\frac{b}{a}(x_1-x_0 +1) = \frac{\ones(B_{a,b})}{\PP(B_{a,b})}\txtlength(\interval{B_{a,b}}{x_0}{x_1})$$  
we obtain
\begin{eqnarray}
&& \txtones(\interval{B_{a,b}}{x_0}{x_1}) = \frac{\ones(B_{a,b})}{\PP(B_{a,b})}\txtlength(\interval{B_{a,b}}{x_0}{x_1}) + \left\{ \frac{b}{a}(x_0-1) \right\} - \left\{ \frac{b}{a}x_1 \right\}. \label{eqn:sturmian->even}
\end{eqnarray}
So, since $0\leq \left\{ \frac{b}{a}(x_0-1) \right\}<1$ and $0\leq \left\{ \frac{b}{a}x_1 \right\}< 1$ and both terms appear in (\ref{eqn:sturmian->even}) with opposite signs,
\begin{eqnarray*}
\txtones(\interval{B_{a,b}}{x_0}{x_1}) -1 & <  \frac{\ones(B_{a,b})}{\PP(B_{a,b})}\txtlength(\interval{B_{a,b}}{x_0}{x_1}) < & \txtones(\interval{B_{a,b}}{x_0}{x_1}) + 1. 
\end{eqnarray*}
\end{proof}

To show the other direction, we make use of the first characterization. We show that if the 1s in $s$ are evenly distributed, then $s$ is recursively balanced.

\begin{proof}[Proof of Theorem~\ref{thm:equivalences}:~\ref{eq:even} $\rAr$~\ref{eq:rec-bal}]
Let $s$ be a periodic 0,1-sequence in which the 1s are evenly distributed. We use induction on $\ones(s)$. If $\ones(s)=1$, then by definition $s$ is recursively balanced. For the induction step, we assume $\ones(s)>1$ and show that~$s$ is block balanced and the 1s in $\red{m}(s)$ are evenly distributed. Then we can apply the induction hypothesis to obtain that $\red{m}(s)$ and hence $s$ is recursively balanced.

\vspace{0.5em}
\emph{Step 1: $s$ is block balanced.} If there were blocks of zeros in $s$ that differed in length by at least two, then we could find intervals $u$ and $v$ of the same length $l$, such that~$u$ contains two 1s and $v$ contains none. But then $\{0,2\}\subseteq\{\floor{\frac{\ones(s)}{\PP(s)}l},\ceil{\frac{\ones(s)}{\PP(s)}l}\}$, which is impossible. So $s$ is block balanced at some $k\in\NN$, $\red{m}(s)$ is well defined and the following identities hold.
\begin{eqnarray*}
\PP(\red{m}(s)) &=& \ones(s) \\
\ones(\red{m}(s)) &=& \PP(s) - k\ones(s)
\end{eqnarray*}

\emph{Step 2: The 1s in $\red{m}(s)$ are evenly distributed.} Briefly, the idea is this: if $m'$ is an interval of $\red{m}(s)=:m$ that has too many 1s, looking at the corresponding interval in $s$ we will find many large blocks (i.e.\ many 0s) and so we can construct an interval~$s''$ of~$s$ that has too few 1s. This gives a contradiction to the assumption that the 1s in $s$ are evenly distributed.

Let $m'=\interval{\red{m}(s)}{x_0}{x_1}$ be an interval of $\red{m}(s)$.  We have to show that
\begin{eqnarray}
\label{eqn:even->rec-bal 1}
\txtones(m') - 1 < \frac{\ones(m)}{\PP(m)}\txtlength(m') < \txtones(m') + 1.
\end{eqnarray}
Assume to the contrary that $m'$ violates (\ref{eqn:even->rec-bal 1}).

We first argue that without loss of generality 
\begin{eqnarray}
\label{eqn:even->rec-bal 2}
\txtones(m')-1\geq \frac{\ones(m)}{\PP(m)}\txtlength(m'),
\end{eqnarray}
 i.e.\ that $m'$ contains too many 1s. Suppose $m'$ contains too few 1s, i.e.\ $\txtones(m')+1 \leq \frac{\ones(m)}{\PP(m)}\txtlength(m')$. Then choose $x_2>x_1$ such that the length of the interval $\interval{m}{x_0}{x_2}$ is a multiple $\alpha\PP(m)$, $\alpha\in\NN$ of the period length. Then $\txtones(\interval{m}{x_0}{x_2})=\alpha\ones(m)$ as~$m$ is periodic. Now the interval $\interval{m}{x_1+1}{x_2}$ has too many 1s, i.e.\ $\txtones(\interval{m}{x_1+1}{x_2})-1 \geq \frac{\ones(m)}{\PP(m)}\txtlength(\interval{m}{x_1+1}{x_2})$, as witnessed by the following computation:
\begin{eqnarray*}
\txtones(\interval{m}{x_1+1}{x_2}) &=&  \txtones(\interval{m}{x_0}{x_2})-\txtones(m') = \alpha\ones(m) - \txtones(m') \\
& \geq & \alpha\ones(m) -  \frac{\ones(m)}{\PP(m)}\txtlength(m') + 1 \\
& = & \frac{\ones(m)}{\PP(m)} ( \alpha\PP(m) - \txtlength(m')) + 1 \\
& = & \frac{\ones(m)}{\PP(m)} \txtlength(\interval{m}{x_1+1}{x_2}) + 1.
\end{eqnarray*} 

Each element of $m'$ corresponds to a block of 0s in $s$, where we take the block to include the preceding 1 but not the succeeding 1. Taking all the blocks in $s$ together that correspond to elements of $m'$ we obtain an interval $s'=\interval{s}{y_0}{y_1}$ of~$s$. Let $s''=\interval{s}{y_0+1}{y_1}$ denote the interval obtained from $s'$ by removing the first 1. Then the following identities hold.
\begin{eqnarray*}
\txtlength(m') &=& \txtones(s') = \txtones(s'') +1 \\
\txtones(m') &=& \txtlength(s') - k\,\txtones(s') = \txtlength(s'') + 1 - k(\txtones(s'') +1)
\end{eqnarray*}

By substituting these and the identities obtained in Step 1 into (\ref{eqn:even->rec-bal 2}) we obtain 
\begin{eqnarray*}
\txtlength(s'')+1-k(\txtones(s'')+1) & \geq & \frac{\PP(s) - k\ones(s)}{\ones(s)}(\txtones(s'') + 1) + 1
\end{eqnarray*}
which by canceling terms implies $\txtlength(s'')  \geq  \frac{\PP(s)}{\ones(s)}(\txtones(s'') + 1)$ and therefore ${\txtones(s'') + 1} \leq \frac{\ones(s)}{\PP(s)}\txtlength(s'')$. This means that $s''$ is an interval in $s$ with too few~1s, contradicting the assumption that the 1s in $s$ are evenly distributed.
\end{proof}

Finally, we turn to the characterization that a periodic 0,1-sequence is Sturmian if and only if it is swap symmetric. In Section~\ref{sec:characterization} we have already tried to motivate that Sturmian sequences are swap symmetric, and the proof indeed proceeds as suggested by Figure~\ref{fig:examples-symmetry}.

\begin{proof}[Proof of Theorem~\ref{thm:equivalences}:~\ref{eq:sturmian} $\rAr$~\ref{eq:symmetry}]
Let $0<b<a$ and let $s=B_{a,b}=(|\col_n(C_{a,b})|)_n$. We claim that
\[
 \swap{s}{0}=(|\col_n(C_{a,b,-\frac{1}{a}})|)_n \equiv (|\col_n(C_{a,b})|)_n = s
\]
which completes the proof. The equivalence in the second step holds by Lemma~\ref{lem:dist-to-line}. All that is left to show is why the first equality holds.

To this end we argue as follows (see Figure~\ref{fig:examples-symmetry}): First we observe that shifting the line down by $-\frac{1}{a}$ only changes those columns $\col_x(S_{a,b})$ with $x\mod a=0$. More precisely $\col_x(S_{a,b,-\frac{1}{a}}) = \col_x(S_{a,b})-\twovec{0}{1}$ if $0=x\mod a$ and $\col_x(S_{a,b,-\frac{1}{a}})=\col_x(S_{a,b})$ otherwise. Now we observe that a point $v$ in a flat staircase is a corner if and only if $v-e_1$ is not in the staircase. As we know which columns changed, and that $a\geq2$, this allows us determine where the corners are after the shift. If $x\mod a= 0$, then $|\col_x(C_{a,b,-\frac{1}{a}})|=0$. If $x\mod a= 1$, then $|\col_x(C_{a,b,-\frac{1}{a}})|=1$. Otherwise $\col_x(C_{a,b,-\frac{1}{a}})=\col_x(C_{a,b})$.
\end{proof}

To show that swap symmetric sequences are Sturmian, we first prove two lemmas. Note that in Lemma~\ref{lem:symmetric->block-symmetric} we do not claim that if $s$ is swap symmetric, then $m(s)$ is balanced. However we can show that $m(s)$ is swap symmetric.\footnote{Our definition of swap symmetry was phrased such that it can be applied to arbitrary periodic sequences.} We then observe in Lemma~\ref{lem:symmetric->balanced} that swap symmetric sequences are necessarily balanced and hence $m(s)$ is balanced.

\begin{Lemma}
\label{lem:symmetric->block-symmetric}
Let $s$ be a periodic $0,1$-sequence. If $s$ is swap-symmetric and $\ones(s)>1$, then $m(s)$ is swap-symmetric.
\end{Lemma}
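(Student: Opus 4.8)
The plan is to track what the swap on $s$ does to the block sequence $m(s)$, and then to recognize the induced operation on $m(s)$ as (the inverse of) a swap.

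First I would fix a witness for the swap symmetry of $s$, namely an index $i$ with $s\equiv\swap{s}{i}$. As already recorded in the text, this forces $s_i=1$ and $s_{i+1}=0$, so passing from $s$ to $s':=\swap{s}{i}$ amounts to moving, in every period, the single $1$ sitting at position $i$ one step to the right, onto the $0$ at position $i+1$. Since $m$ is defined only up to shift and satisfies $s_1\equiv s_2\Leftrightarrow m(s_1)\equiv m(s_2)$, and since $s'\equiv s$, we have $m(s')\equiv m(s)$. It therefore suffices to compare $m(s')$ with $m(s)$ directly and to show that they differ by a swap.

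Next I would carry out this comparison combinatorially. The $1$ at position $i$ is the start of some block, say the $j$-th, and $s_{i+1}=0$ guarantees that this block has length at least $2$, so its length is genuinely decreased rather than destroyed. Moving that $1$ one step to the right lengthens the preceding block by one and shortens the $j$-th block by one, while leaving all other blocks unchanged. Here the hypothesis $\ones(s)>1$ is essential: it guarantees that there really are two distinct adjacent blocks involved, so that the net effect on $m(s)$ is to increment one entry and to decrement the following entry. In other words, there is an index $k$ such that $m(s')$ is obtained from $m(s)$ by adding $1$ at position $k$ and subtracting $1$ at position $k+1$.

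Finally I would observe that this increment/decrement operation is exactly the inverse of $\swap{\cdot}{k}$, so that applying $\swap{\cdot}{k}$ to $m(s')$ restores $m(s)$, i.e.\ $\swap{m(s')}{k}=m(s)$. Combining this with $m(s')\equiv m(s)$ and with the fact that a swap commutes with a shift (applying $\swap{\cdot}{k}$ to a shift of a sequence yields a shift of a swap of that sequence, taken at a correspondingly re-indexed position), I would conclude $\swap{m(s)}{k'}\equiv m(s)$ for a suitable $k'$, which is precisely the statement that $m(s)$ is swap symmetric. The main obstacle will be the index bookkeeping in this last step: one must make the informal claim that a swap commutes with a shift precise, and keep careful track of the indices modulo the relevant periods, including the cyclic case where the two affected blocks straddle the end of a period. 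The combinatorial content---one entry goes up by one, the adjacent entry goes down by one---is robust, so I expect this to be careful index chasing rather than a genuine difficulty.
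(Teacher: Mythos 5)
Your proof is correct and takes essentially the same route as the paper's: there too, one observes that swapping at $i$ lengthens the block preceding position $i$ by one and shortens the following block by one (with $\PP(m(s))=\ones(s)>1$ ensuring these are distinct blocks), so that $\swap{m(\swap{s}{i})}{j}\equiv m(s)$, and then combines this with $m(\swap{s}{i})\equiv m(s)$ to extract a witnessing swap index for $m(s)$. The only differences are cosmetic: your block indexing is shifted by one relative to the paper's, and you spell out the swap-versus-shift bookkeeping that the paper compresses into the existence of the index $l$.
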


\begin{proof}
We know that $\PP(m(s))=\ones(s)>1.$ Let $0\leq i<\PP(s)$ be such that $\swap{s}{i}\equiv s$. Then $s_i=1$.  Say the block preceding $s_i$ is the $j$-th block of $s$. Swapping at $i$ makes all the $k$-th blocks of $s$ with $k \equiv j \mod \ones(s)$ larger by one and all the $k$-th blocks of $s$ with $k \equiv j+1 \mod \ones(s)$ smaller by one while leaving all other blocks unmodified. As $\PP(m(s))>1$, this means that $\swap{m(\swap{s}{i})}{j}\equiv m(s)$ but by assumption $\swap{s}{i}\equiv s$, so there exists an $l\in\ZZ$ such that
\[
  \swap{m(s)}{l} \equiv \swap{m(\swap{s}{i})}{j} \equiv m(s)
\]
which means that $m(s)$ is swap symmetric.
\end{proof}

\begin{Lemma}
\label{lem:symmetric->balanced}
If $s$ is a periodic swap-symmetric sequence, then $s$ is balanced.
\end{Lemma}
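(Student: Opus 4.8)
The plan is to work directly with the shift that witnesses swap symmetry. By definition there is an index $i$ with $s\equiv\swap{s}{i}$, and by the definition of $\equiv$ a shift $t\in\ZZ$ with $\swap{s}{i}_n=s_{n+t}$ for all $n$. After relabelling indices I may assume $i=0$, and I write $a:=\PP(s)$; I may assume $a\geq 2$, since otherwise $s$ is constant and hence balanced. Comparing $\swap{s}{0}_n=s_{n+t}$ with the definition of $\swap{s}{0}$ collapses everything into one relation,
\[
s_{n+t}-s_n=\delta_n,\quad\text{where}\quad
\delta_n=\begin{cases}-1 & \text{if } n\equiv 0\mod a,\\ +1 & \text{if } n\equiv 1\mod a,\\ 0 & \text{otherwise,}\end{cases}
\]
valid for every $n\in\ZZ$. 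The entire argument is read off from this identity.

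First I would prove that $\gcd(t,a)=1$. Consider the orbit of $0$ under $n\mapsto n+t$ on $\ZZ/a\ZZ$; it is the subgroup generated by $t$, i.e.\ the multiples of $g:=\gcd(t,a)$, of size $a/g$. Telescoping the relation around this orbit gives $\sum_j \delta_{jt}=s_{(a/g)t}-s_0=0$, because $(a/g)t\equiv 0\mod a$. On the other hand, among the residues in this orbit exactly one is $\equiv 0\mod a$, namely $0$ itself, contributing $\delta_0=-1$, while a residue $\equiv 1\mod a$ lies in the orbit only if $1$ is a multiple of $g$, i.e.\ only if $g=1$. Hence if $g>1$ the orbit sum equals $-1\neq 0$, a contradiction. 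Therefore $g=1$.

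With $g=1$ the shift by $t$ cycles through all of $\ZZ/a\ZZ$, so the values $s_{kt}$ for $k=0,\dots,a-1$ are exactly all values taken by $s$. Writing $\Sigma_k:=\sum_{j=0}^{k-1}\delta_{jt}$, the relation gives $s_{kt}=s_0+\Sigma_k$. Over one full cycle the summands $\delta_{jt}$ are all $0$ except for a single $-1$, which occurs at $j=0$ since $\delta_0=-1$, and a single $+1$, which occurs when the orbit later reaches the residue $1\mod a$. Consequently $\Sigma_0=0$, then $\Sigma_k=-1$ until that later step, and $\Sigma_k=0$ thereafter, so $\Sigma_k\in\{-1,0\}$ throughout. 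Hence every value of $s$ lies in $\{s_0-1,s_0\}$, which is exactly the assertion that $s$ is balanced.

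The hard part is the coprimality step: if $\gcd(t,a)>1$ the shift would reach only part of the residues, the orbit would no longer see both special positions, and the control on the range of $\Sigma_k$ would fail. The orbit-telescoping observation is precisely what forces $\gcd(t,a)=1$, after which the conclusion is bookkeeping. I also note in passing that the relation $s_0=s_1+1$ recorded after the definition of swap symmetry re-emerges here from the single $\pm1$ steps of $\Sigma$, but it is a byproduct rather than an input to this argument.
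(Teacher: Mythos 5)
Your proof is correct, and it takes a genuinely different route from the paper's. The paper argues by contradiction: assuming $s$ has at least three distinct values $a=s_i$, $b=s_{i+1}$ and some third value $c$, it introduces the shift-invariant statistic $d(s)=\sum_{s_k=a}\bigl(k-\max\set{l<k}{s_l=c}\bigr)$ (sum of distances from occurrences of $a$ to the closest preceding $c$) and observes that the swap at $i$ increases $d$ by exactly $1$, contradicting $\swap{s}{i}\equiv s$; consecutiveness of the two surviving values then comes from the relation $s_i=s_{i+1}+1$ recorded after the definition of swap symmetry. You instead work directly with the witnessing shift $t$, compressing swap symmetry into the functional equation $s_{n+t}-s_n=\delta_n$ and telescoping it along orbits of $n\mapsto n+t$ in $\ZZ/a\ZZ$: the orbit sum forces $\gcd(t,a)=1$, and then the partial sums $\Sigma_k\in\{-1,0\}$ pin every value of $s$ into $\{s_0-1,s_0\}$. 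The paper's argument is shorter and needs no arithmetic of the shift, since the invariant $d$ does all the work; yours is constructive and yields strictly more information: coprimality of $t$ with the period, the relation $s_0=s_1+1$ as a byproduct rather than an input, and in fact an explicit description of $s$ as a rotation coding ($s_n=s_0-1$ exactly when $nt^{-1}\bmod a$ lies in an initial segment of $\ZZ/a\ZZ$), which points toward the harder implication of Theorem~\ref{thm:equivalences} that swap symmetric sequences are Sturmian, not merely balanced.
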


\begin{proof}
Assume to the contrary that $s$ contains at least three different entries. Let $0\leq i <\PP(s)$ be such that $\swap{s}{i}\equiv s$. Then $s_i\not=s_{i+1}$. Let $a=s_i$ and $b=s_{i+1}$. Let $c\in\ZZ$ be such that there exists a $j\in\ZZ$ with $s_j=c$ but $a\not= c\not=b$. We now define the parameter $d(s)$, which is the sum of the distances of any occurrence of $a$ in $\text{period}(s)$ to the closest preceding occurrence of $c$ in $s$, i.e.
\begin{eqnarray*}
d(s) &:=& \sum_{0\leq k < \PP(s), s_k=a} k - \max \set{l < k}{s_l=c}.
\end{eqnarray*}
Note that if $s\equiv s'$, then $d(s)=d(s')$ as shifting a sequence to the left or right does not affect the distances between occurrences of values. Now the swap at $i$ interchanges the $a$ at position $i$ and the $b$ at position $i+1$, which increases the distance of this occurrence of $a$ to the previous $c$ by $1$ and leaves all other distances of an occurrence of $a$ to a previous $c$ unaffected. Hence $d(\swap{s}{i})=d(s)+1$ and so $\swap{s}{i}\not\equiv s$, which is a contradiction.
\end{proof}

After these two lemmas, the proof that swap symmetric sequences are Sturmian is easy. Again we proceed by showing that swap symmetric sequences are recursively balanced.

\begin{proof}[Proof of Theorem~\ref{thm:equivalences}:~\ref{eq:symmetry} $\rAr$~\ref{eq:rec-bal}]
  Let $s$ be a periodic 0,1-sequence that is swap symmetric. There is an index $i$ at which we can swap, so $\ones(s)>0$. If $\ones(s)=1$, $s$ is recursively balanced by definition. So we can assume $\ones(s)>1$. By Lemma~\ref{lem:symmetric->block-symmetric} it follows that $m(s)$ is swap symmetric. By Lemma~\ref{lem:symmetric->balanced} it follows that $m(s)$ is balanced. Taking both together we conclude that $s$ is block balanced and that $\red{m}(s)$ is well defined and swap symmetric. By induction we infer that $\red{m}(s)$ is recursively balanced. But if $s$ is block balanced and $\red{m}(s)$ is recursively balanced, then $s$ is recursively balanced.
\end{proof}



\section{Application: Short Representations}
\label{sec:short-representations}

It is a celebrated result by Barvinok \cite{Barvinok} that there is a polynomial time algorithm for counting the number of lattice points inside a given rational polytope when the dimension of the polytope is fixed. Note that if the dimension is an input variable, the problem gets $NP$-hard \cite{GJ79}. For more about the algorithm see \cite{DeL05}, \cite{LHRT04} and the textbook \cite{BarvinokZurich}.

The crucial ingredient of Barvinok's proof was his result that the set of lattice points in a simplicial cone of any fixed dimension can be expressed using a short generating function. In this section we give a new proof of this result for the special case of 2-dimensional cones (Theorem~\ref{thm:barvinok}). A generalization of our proof to higher dimensions is not immediate, we hope, however, that such a generalization can be found in the future.

We consider the Laurent polynomial ring $\KK[x_1^\pm,\ldots,x_d^\pm]$. For a vector $m=(m_1,\ldots,m_d)\in\ZZ^d$ we write $x^m:=x_1^{m_1}\ldots x_d^{m_d}$. This gives a bijection between $\ZZ^d$ and the set of monomials in $\KK[x_1^\pm,\ldots,x_d^\pm]$. We can thus represent the set of lattice points in a polyhedron $P$ by the generating function $f_P(x)=\sum_{m\in\ZZ^d\cap P}x^m$. If $P\cap\ZZ^d$ is large, this representation of $f_P$ contains many terms. Using rational functions it is possible to find shorter representations of $f_P$. For example the generating function of all non-negative integral multiples of a vector $m$ can be written as $\frac{1}{1-x^m}$, which allows us to express point sets like $\{0,m,\ldots,km\}$ as $\frac{1-x^{(k+1)m}}{1-x^m}$.

Developing these notions in detail is beyond the scope of this article. As references we recommend \cite{BeckRobins} and \cite{BarvinokZurich}. However, we would like to point out, informally, how the algebraic operations on generating functions correspond to geometric operations: The sum of generating functions corresponds to the union of the respective sets. The product of generating functions corresponds to the Minkowski sum of the respective sets. Taking the product of a generating function and a monomial $x^m$ thus corresponds to translation by $m$. Evaluating the generating function $f_P(x)$ at the values $x^{m_1},\ldots,x^{m_d}$ for $m_1,\ldots,m_d\in \ZZ^d$ corresponds to applying the linear map given by the matrix $A=(m_1 \ldots m_d)$ that has the $m_i$ as columns to the set $P$. 

As we already mentioned, for every 2-dimensional rational cone $K$ in $\RR^2$ there exists a lattice transform $A$ such that $AK=\cone\left(\twovec{1}{0}, \twovec{a}{b}\right)$ for $a,b\in\NN$ with $\gcd(a,b)=1$. Barvinok showed a general version of the following theorem for cones of any dimension. We are going to give a new proof of this version for 2-dimensional cones (recall the definitions from the end of Section~\ref{sec:geometricidea}).

\begin{Thm}
\label{thm:barvinok}
Let $a,b\in\NN$ with $\gcd(a,b)=1$. Let $K=\cone\left(\twovec{1}{0}, \twovec{a}{b}\right)$. 
Then $f_K$ admits a representation as a rational function with $\mathcal{O}(\log a)$ terms and this representation can be computed in time polynomial in $\log a + \log b$.
\end{Thm}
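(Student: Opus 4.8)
The plan is to prove the statement by a single recursion on cones that exactly mirrors the triangle recursion of Lemma~\ref{lem:recursion-triangle}, while carefully tracking that each recursive step contributes only $\mathcal{O}(1)$ monomials to the rational function. The two elementary facts I would record first are these. A unimodular linear map $A$ acts on generating functions by the monomial substitution $f_{AS}(x_1,x_2)=f_S(x^{Ae_1},x^{Ae_2})$; concretely the shear $A=\left(\begin{smallmatrix}1&0\\ b\div a&1\end{smallmatrix}\right)$ acts by $x_1\mapsto x_1x_2^{b\div a},\,x_2\mapsto x_2$ and the diagonal reflection $\diagrefl$ acts by $x_1\leftrightarrow x_2$. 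Crucially, such a substitution rewrites exponents only and never increases the number of monomials, so it sends a ratio of $t$-term Laurent polynomials to a ratio of $t$-term Laurent polynomials. Secondly, an integral-slope cone is cheap: for $q\in\NN$ the standard semigroup tiling gives
\[
f_{\cone(\twovec{1}{0},\twovec{1}{q})}=\frac{1+x_1x_2\tfrac{1-x_2^{q-1}}{1-x_2}}{(1-x_1)(1-x_1x_2^q)},
\]
which has $\mathcal{O}(1)$ terms, and likewise for its half-open variants.

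The heart of the proof is the cone analogue of Lemma~\ref{lem:recursion-triangle}.1. For $0<a<b$ with $q=b\div a$ and $r=b\mod a$, I would verify the disjoint decomposition
\[
\cone(\twovec{1}{0},\twovec{a}{b})=\cone(\twovec{1}{0},\twovec{1}{q})\;\sqcup\; A\,\cone(\twovec{1}{0},\twovec{a}{r}),
\]
where $A\cone(\twovec{1}{0},\twovec{a}{r})=\cone(\twovec{1}{q},\twovec{a}{b})$ since $A\twovec{a}{r}=\twovec{a}{b}$ and $A\twovec{1}{0}=\twovec{1}{q}$. This is precisely the splitting of the wedge $\{y\ge 0,\ bx\ge ay\}$ into the integral-slope subwedge below the ray $\twovec{1}{q}$ and the sheared remainder, and it is the cone counterpart of $T_{a,b}=T'_{a,(b\div a)a}\cup A\,T_{a,b\mod a}$. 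Applying the dictionary between algebra and geometry together with the substitution rule above yields
\[
f_{\cone(\twovec{1}{0},\twovec{a}{b})}(x_1,x_2)=f_{\cone(\twovec{1}{0},\twovec{1}{q})}(x_1,x_2)+f_{\cone(\twovec{1}{0},\twovec{a}{r})}(x_1x_2^{q},x_2).
\]

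To iterate I would run the Euclidean algorithm on the parameter pair. If instead $b<a$ the cone is flat and $b\div a=0$, so no shear is available; here I apply $\diagrefl$ (equivalently swap the coordinates of the generating function, $f_K(x_1,x_2)=f_{\cone(\twovec{1}{0},\twovec{b}{a})}(x_2,x_1)$), which is the analogue of Lemma~\ref{lem:recursion-triangle}.2 and returns us to the steep case with the parameters swapped. Alternating ``peel a shear'' with ``swap coordinates'' drives the pair $(a,b)$ through exactly the recursion $c_{i+2}=c_i\mod c_{i+1}$ of Lemma~\ref{lem:euclidean-algorithm}, which by that lemma terminates after $\mathcal{O}(\log a)$ steps (using $\gcd(a,b)=1$) at an integral-slope base cone $\cone(\twovec{1}{0},\twovec{1}{d})$, whose generating function I write down directly as above.

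It then remains to assemble the bound. At each of the $\mathcal{O}(\log a)$ stages the decomposition emits one integral-slope piece with $\mathcal{O}(1)$ terms; to express it in the original variables I apply the accumulated composition of shears and coordinate swaps from the outer stages. Each such composition is again a monomial substitution, so it preserves the $\mathcal{O}(1)$ term count of the emitted piece, and $f_K$ is therefore a sum of $\mathcal{O}(\log a)$ pieces each of $\mathcal{O}(1)$ terms, i.e.\ a rational function with $\mathcal{O}(\log a)$ terms in total. Since all exponents produced by the substitutions stay polynomial in $a$ and $b$ and each stage performs only a constant amount of arithmetic on integers of bit-size $\mathcal{O}(\log a+\log b)$, the whole computation runs in time polynomial in $\log a+\log b$. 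I expect the main obstacle to be the two pieces of bookkeeping: ensuring the decomposition is genuinely disjoint (handled by using half-open cones so that the shared ray $\twovec{1}{q}$ is counted exactly once, which keeps the generating functions additive), and verifying that composing $\Theta(\log a)$ monomial substitutions neither introduces spurious denominators nor inflates the number of monomials in any emitted piece.
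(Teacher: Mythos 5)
Your central decomposition --- splitting $\cone\left(\twovec{1}{0},\twovec{a}{b}\right)$ along the interior ray through $\twovec{1}{q}$, $q=b\div a$, into the integral-slope cone $\cone\left(\twovec{1}{0},\twovec{1}{q}\right)$ and the sheared cone $A\,\cone\left(\twovec{1}{0},\twovec{a}{r}\right)$, $r=b\mod a$ --- is correct, and it is a genuinely different route from the paper's, which instead reduces $f_K$ to the half-open triangle $\Delta'_{a,b}$ by periodicity along $\twovec{a}{b}$ and then runs the Euclidean recursion of Lemma~\ref{lem:recursion-triangle} on triangles. However, your flat-case step rests on a false identity. You claim that for $b<a$ one has $f_{\cone\left(\twovec{1}{0},\twovec{a}{b}\right)}(x_1,x_2)=f_{\cone\left(\twovec{1}{0},\twovec{b}{a}\right)}(x_2,x_1)$. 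Swapping the variables of a generating function applies $\diagrefl$ to the underlying point set, and $\diagrefl\cone\left(\twovec{1}{0},\twovec{b}{a}\right)=\cone\left(\twovec{0}{1},\twovec{a}{b}\right)$ is the cone \emph{above} the ray through $\twovec{a}{b}$, not the cone below it: the two are the complementary pieces into which that ray cuts the first quadrant. Concretely, the monomial $x_1$ occurs in $f_{\cone\left(\twovec{1}{0},\twovec{a}{b}\right)}$ but not in $f_{\cone\left(\twovec{0}{1},\twovec{a}{b}\right)}$ whenever $b\geq 1$. So from the first flat step onward your recursion computes the generating function of the wrong cone. The triangle analogue you appeal to does not transfer: Lemma~\ref{lem:recursion-triangle}.2 reads $T_{a,b}=\orgrefl\,\diagrefl T_{b,a}+\twovec{a}{b}$, i.e.\ the diagonal reflection must be combined with a point reflection \emph{and} a translation, and this only lands back on a standard object because triangles are bounded; for a cone, $\orgrefl\,\diagrefl K$ opens into the opposite half-plane and no translation makes it a cone of the form $\cone\left(\twovec{1}{0},\cdot\right)$ again.

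The gap is repairable without changing your architecture. In the flat case $r<a$, instead of swapping coordinates, apply the \emph{horizontal} shear $\left(\begin{smallmatrix}1 & a\div r\\ 0&1\end{smallmatrix}\right)$, which fixes $\twovec{1}{0}$ and maps $\twovec{a\mod r}{r}$ to $\twovec{a}{r}$; this yields the exact substitution identity
\[
f_{\cone\left(\twovec{1}{0},\twovec{a}{r}\right)}(x_1,x_2)=f_{\cone\left(\twovec{1}{0},\twovec{a\mod r}{r}\right)}\bigl(x_1,\;x_1^{a\div r}x_2\bigr),
\]
which costs no additional terms and returns you to a steep cone in standard form, so the next vertical peel applies. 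With this replacement the parameter pairs descend exactly as in Lemma~\ref{lem:euclidean-algorithm}, each of the $\mathcal{O}(\log a)$ peels emits one $\mathcal{O}(1)$-term integral-slope piece, and your term-count and running-time analysis then goes through as written. (Alternatively one can keep the reflection but honestly track cones of the form $\cone\left(\twovec{0}{1},\cdot\right)$, or use the complement identity $f_{\cone\left(\twovec{0}{1},\twovec{a}{r}\right)}=\frac{1}{(1-x_1)(1-x_2)}+\frac{1}{1-x_1^{a}x_2^{r}}-f_{\cone\left(\twovec{1}{0},\twovec{a}{r}\right)}$, valid here since $\gcd(a,r)=1$; this costs $\mathcal{O}(1)$ extra terms per step and also preserves the $\mathcal{O}(\log a)$ bound.)
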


\begin{proof}
\emph{Step 1.} We express $f_K$ in terms of $f_{\Delta'_{a,b}}$. To this end we first note that 
$$\cone(\twovec{1}{0}, \twovec{a}{b}) = \bigcup_{k\geq 0} k\twovec{a}{b} + (\cone(\twovec{1}{0}, \twovec{a}{b}) \cap \RR\times[0,b) )$$
and
\begin{eqnarray*}
\ZZ^2 \cap \cone(\twovec{1}{0}, \twovec{a}{b}) \cap \RR\times[0,b) & =&  \{\twovec{0}{0}\} \cup T'_{a,b} \\ && \cup \left(\set{\twovec{i}{0}}{i\geq a+1} + \set{\twovec{0}{j}}{0\leq j\leq b-1}\right).
\end{eqnarray*}
See Figure~\ref{fig:cone-in-terms-of-triangle}. In terms of generating functions this translates into 
$$f_K(x)=\frac{1}{1-x_1^ax_2^b} \left( 1 + f_{\Delta'_{a,b}}(x) + \frac{x_1^{a+1}}{1-x_1}\frac{1-x_2^b}{1-x_2} \right).$$
Here we express $f_K$ using $f_{\Delta'_{a,b}}$ and a constant number of other terms. So it suffices to give a short expression of $f_{\Delta'_{a,b}}$.

\begin{figure}[ht]
\begin{center}
\includegraphics[scale=0.3]{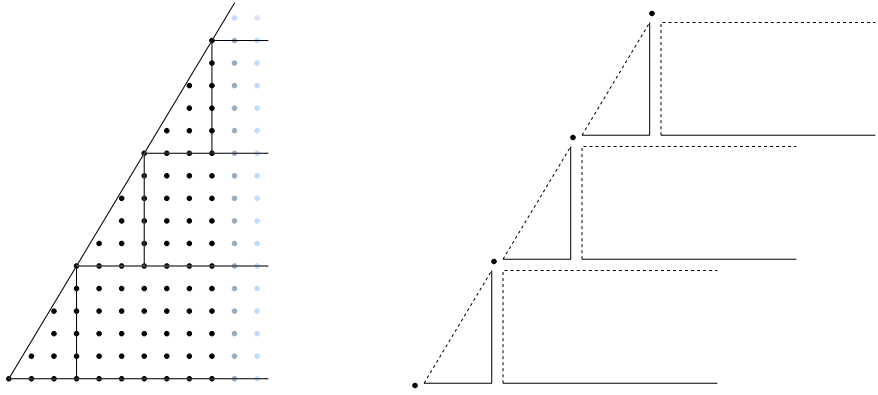}
\caption{\label{fig:cone-in-terms-of-triangle}Expressing the lattice points in a cone in terms of triangles. The right picture shows the occurring shapes (dashed lines indicate open faces).}
\end{center}
\end{figure}

\emph{Step 2.} We use the recursion from Lemma~\ref{lem:recursion-triangle} to give a short expression for $f_{\Delta'_{a,b}}$. Let $(c_n)_n$ be the sequence defined by $c_1=b$, $c_2=a$ and $c_{i+2}=c_{i} \mod c_{i+1}$ and let $j$ be the index such that $c_{j+1}=1$ and $c_{j+2}=0$. We express $f_{T'_{c_{i+1},c_{i}}}$ in terms of $f_{T'_{c_{i+2},c_{i+1}}}$, by applying first~\ref{lem:recursion-triangle}.1 and then~\ref{lem:recursion-triangle}.2 and~\ref{lem:recursion-triangle}.4.
\begin{eqnarray*}
f_{T'_{c_{i+1},c_{i}}}(x_1,x_2) &=& f_{T'_{c_{i+1},c_{i+2}}}(x_1x_2^{c_{i} \div c_{i+1}},x_2) \\&& + f_{T'_{c_{i+1},(c_{i}\div c_{i+1})c_{i+1}}}(x_1,x_2) \\ &=&
x_1^{c_{i+1}}x_2^{c_{i+2}}\cdot f_{T'_{c_{i+2},c_{i+1}}}(x_2^{-1},x_1^{-1}x_2^{-(c_{i} \div c_{i+1})}) \\
&& + \frac{1}{1-x_2} \cdot \left( \frac{1-x_1^{c_{i+1}+1}}{1-x_1} - \frac{1-x_1^{c_{i+1}+1}x_2^{(c_{i}\div c_{i+1})(c_{i+1}+1)}}{1-x_1x_2^{c_{i}\div c_{i+1}}} \right).
\end{eqnarray*}
We have thus expressed $f_{T'_{c_{i+1},c_{i}}}$ using a constant number of other terms. We proceed in this fashion until we reach the case $f_{T'_{c_{j+1},c_{j}}}=f_{T'_{1,c_{j}}}$ which we can solve directly using~\ref{lem:recursion-triangle}.3:
$$f_{T'_{1,c_{j}}}=x_1\frac{1-x_2^{c_j}}{1-x_2}.$$

\emph{Step 3.} The expression is short and can be computed in polynomial time as the Euclidean Algorithm is fast. By Lemma~\ref{lem:euclidean-algorithm} the number of iterations required in step 2 is $\mathcal{O}(\log a)$. In each step we pick up a constant number of terms. So the total number of terms in the final expression is $\mathcal{O}(\log a)$. The algorithm runs in time polynomial in $\log a + \log b$ as the numbers $c_{i+2}=c_i \mod c_{i+1}$ and $c_i \div c_{i+1}$ can be computed in time polynomial in $\log c_i + \log c_{i+1}$.
\end{proof}

This proof of Theorem~\ref{thm:barvinok} differs from Barvinok's. Barvinok gives a \emph{signed} decomposition of a cone into \emph{unimodular} cones. We give a \emph{positive} decomposition of the triangle $T'_{a,b}$ into triangles $T'_{c_{i+1},(c_{i}\div c_{i+1})c_{i+1}}$ that are not unimodular but easy to describe, i.e.~using a constant number of terms. 

In this context ``positive'' means that the 2-dimensional triangle $T'_{a,b}$ is written as a disjoint union of half-open 2\nolinebreak-dimensional triangles $T'_{a,b}$. This does not mean that the numerator of the rational function has only positive coefficients. Negative coefficients appear in the ``easy'' description of the triangles $T'_{c_{i+1},(c_{i}\div c_{i+1})c_{i+1}}$. 

\vspace{1em}

Lemma~\ref{lem:recursion-parallelepiped} can be used to obtain a short representation of the generating function of the lattice points in the fundamental parallelepiped of any rational cone in the plane. We implement this idea in the proof of Theorem~\ref{thm:dedekind-carlitz} in Section~\ref{sec:dedekind-carlitz}. This representation can also be used to give an alternative proof of Theorem~\ref{thm:barvinok}. Again the representation is positive in the sense that the set of lattice points in the fundamental parallelepiped is expressed as a disjoint union of Minkowski sums of intervals. But of course still negative coefficients appear as they appear in the representation of intervals. As opposed to the representation based on triangles, the representation based on fundamental parallelepipeds relies on taking products; so with this approach expanding the products in the numerators leads to an expression that is not short any more.

\vspace{1em}

It is also possible to give a recursion similar to~\ref{lem:recursion}, \ref{lem:recursion-parallelepiped} and~\ref{lem:recursion-triangle} directly for cones. However, in this case the recursion does require us to take differences of sets and we do not obtain a ``positive'' decomposition. Nonetheless the recursion differs from the one based on the continued fraction expansion of $\frac{b}{a}$ given in~\cite[Chapter~15]{BarvinokZurich}.


\section{Application: Dedekind-Carlitz Polynomials}
\label{sec:dedekind-carlitz}

Given $0<a,b\in\NN$ with $\gcd(a,b)=1$, Carlitz introduced the following polynomial generalization of Dedekind sums, which Beck, Haase and Matthews in \cite{BeckHaase08} call the \defn{Dedekind-Carlitz polynomial}:
\[
 c_{a,b}(x,y):=\sum_{k=1}^{a-1}x^{k-1}y^{\floor{\frac{b}{a}k}}.
\]
For a brief overview of the history of and literature about Dedekind sums and the Dedekind-Carlitz polynomial, we refer to \cite{BeckHaase08}. There also the relationship between Dedekind-Carlitz polynomials and the fundamental parallelepipeds of cones (see below) is established. Appealing to Barvinok's Theorem, Beck, Haase and Matthews conclude that the Dedekind-Carlitz polynomial can be computed in polynomial time and must have a short representation,\footnote{In \cite{BeckHaase08} this is argued even for higher-dimensional Dedekind-Carlitz polynomials.} however they do not give such a short representation explicitly. Also they remark that Dedekind sums can be computed efficiently in the style of the Euclidean Algorithm and ask if such a recursive procedure also exists for Dedekind-Carlitz polynomials. In this section we use the recursion for the lattice points inside a fundamental parallelepiped developed in Section~\ref{sec:geometricidea} to give an explicit recursion formula that allows one to compute short representations of Dedekind-Carlitz polynomials in the style of the Euclidean Algorithm.

We first observe that $c_{a,b}$ is the generating function of the set
\[
 \set{z\in\ZZ^2}{z_1=\floor{\frac{b}{a}z_2}, 1\leq z_1 \leq a-1} - \mtwovec{1}{0} = \pS{a}{b} - \mtwovec{1}{0}.
\]
which is just a translate of the set of lattice points in the open fundamental pa\-\mbox{rall}elepiped $\pS{a}{b}$. Hence the recursion given in Lemma~\ref{lem:recursion-parallelepiped} can be used to give a recursion formula for Dedekind-Carlitz sums in the spirit of the Euclidean Algorithm. To this end, we use $\gD{a}{b}(x,y)$ and $\gR{a}{b}(x,y)$ to denote the generating functions of the sets $\pS{a}{b}$ and $\pC{a}{b}$, respectively. So
\begin{eqnarray*}
\gD{a}{b}(x,y) & = & \sum_{k=1}^{a-1}x^{k}y^{\floor{\frac{b}{a}k}}, \\
\gR{a}{b}(x,y) & = & \sum_{k=1}^{b-1}x^{\ceil{\frac{a}{b}k}}y^k.
\end{eqnarray*}
Now, by simply translating the geometric operations into the language of generating functions, we obtain the following theorem. In \cite{BeckHaase08} this result is derived from Barvinok's Theorem. We give an explicit recursion formula in the proof.

\begin{Thm}
\label{thm:dedekind-carlitz}
Let $0<a,b\in\NN$ with $\gcd(a,b)=1$. Then $c_{a,b}$ admits a representation as a rational function with $\mathcal{O}(\log a)$ terms and this representation can be computed in time polynomial in $\log a + \log b$.
\end{Thm}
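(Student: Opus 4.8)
The plan is to translate the recursion of Lemma~\ref{lem:recursion-parallelepiped} verbatim into the language of generating functions, exactly as was done for triangles in the proof of Theorem~\ref{thm:barvinok}. Since $c_{a,b}(x,y) = \gD{a}{b}(x,y)\cdot x^{-1}$, it suffices to produce a short representation of $\gD{a}{b}$ and to track the number of terms and the arithmetic cost. The key observation is that each of the three clauses of Lemma~\ref{lem:recursion-parallelepiped} corresponds to an elementary algebraic operation: the lattice transform $A$ becomes the substitution $(x,y)\mapsto(x, x^{b\div a}y)$ (since multiplying by $A$ adds $(b\div a)$ copies of the column index to the second coordinate); the reflections $\orgrefl$ and $\diagrefl$ become the substitution $(x,y)\mapsto(y^{-1},x^{-1})$ followed by the translation given by the monomial factor $x^a y^b$; and the Minkowski sums with the finite vertical intervals $\{\twovec{0}{0},\ldots,\twovec{0}{-(b\div a)+1}\}$ become multiplication by the geometric sum $\frac{1-y^{-(b\div a)}}{1-y^{-1}}$, and likewise for the other intervals.

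First I would set up the mutual recursion for the pair $(\gD{a}{b},\gR{a}{b})$. Applying \ref{lem:recursion-parallelepiped}.1 gives
\begin{eqnarray*}
\gD{a}{b}(x,y) &=& \gD{a}{b\mod a}(x,x^{b\div a}y), \\
\gR{a}{b}(x,y) &=& \gD{a}{b\mod a}(x,x^{b\div a}y)\cdot\frac{1-y^{-(b\div a)}}{1-y^{-1}} \\
&& +\; \gR{a}{b\mod a}(x,x^{b\div a}y)\cdot y^{-(b\div a)} \\
&& +\; x^a y^b\cdot\frac{y^{-1}-y^{-(b\div a)-1}}{1-y^{-1}},
\end{eqnarray*}
where each geometric factor encodes the corresponding Minkowski sum. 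To swap the roles of $a$ and $b$ so that the Euclidean descent can continue, I would apply \ref{lem:recursion-parallelepiped}.2, which expresses $\gD{a}{b}$ and $\gR{a}{b}$ through $\gR{b}{a}$ and $\gD{b}{a}$ after the reflection substitution $(x,y)\mapsto(y^{-1},x^{-1})$ and multiplication by $x^a y^b$. Composing one reduction step with one swap step reduces the problem from the index pair $(a,b)$ to the pair $(b\mod a, a)$, which is exactly one round of the Euclidean Algorithm on $(a,b)$. The base case is \ref{lem:recursion-parallelepiped}.3: $\gR{a}{1}=0$ and $\gD{a}{1}=\frac{x-x^a}{1-x}$, both short.

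For the complexity bookkeeping I would invoke Lemma~\ref{lem:euclidean-algorithm}: the descent terminates after $\mathcal{O}(\log a)$ rounds because $\gcd(a,b)=1$ guarantees we reach a denominator of $1$. Each round introduces only a constant number of new rational-function terms and only substitutions of variables by monomials (which, crucially, never expand the number of terms — they merely rewrite the exponents), so the final representation has $\mathcal{O}(\log a)$ terms. Finally, since every exponent appearing is a product or quotient from the Euclidean Algorithm, each is computable in time polynomial in $\log a+\log b$, giving the claimed running time. The main obstacle I anticipate is purely bookkeeping: one must verify that the monomial substitutions compose correctly through the alternating reduction and swap steps so that the exponents stay integral and the denominators telescope as in Theorem~\ref{thm:barvinok}; conceptually nothing new is required beyond Lemma~\ref{lem:recursion-parallelepiped}, so I would expect the authors to state the recursion formula and omit the routine verification.
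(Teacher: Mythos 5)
Your proposal follows the paper's own proof almost exactly: the paper likewise writes $c_{a,b}(x,y)=x^{-1}\gD{a}{b}(x,y)$, translates Lemma~\ref{lem:recursion-parallelepiped} clause by clause into a mutual recursion for the pair $(\gD{a}{b},\gR{a}{b})$ along the Euclidean sequence $c_1=b$, $c_2=a$, $c_{i+2}=c_i\mod c_{i+1}$, uses the same base case $\gR{a}{1}=0$, $\gD{a}{1}=\frac{x-x^a}{1-x}$, and invokes Lemma~\ref{lem:euclidean-algorithm} for the $\mathcal{O}(\log a)$ term count and the polynomial running time. So in structure there is nothing to compare: it is the same proof.

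However, one formula in your recursion is wrong, and it is the one that carries the entire reduction step: the substitution implementing the lattice transform $A=\left(\begin{smallmatrix}1&0\\ b\div a&1\end{smallmatrix}\right)$. Applying a linear map to a point set corresponds to substituting each variable by the monomial given by the corresponding \emph{column} of the matrix, so $f_{AS}(x,y)=f_S(xy^{b\div a},y)$, not $f_S(x,x^{b\div a}y)$ as you write; your substitution implements $A^T$ rather than $A$. Your verbal justification (``$A$ adds $b\div a$ copies of the column index to the second coordinate'') is correct, but the formula you derive from it is its transpose. The error is not self-correcting: for $(a,b)=(2,3)$ one has $\pS{2}{3}=\{\twovec{1}{1}\}$, hence $\gD{2}{3}(x,y)=xy$, while your recursion gives $\gD{2}{3}(x,y)=\gD{2}{1}(x,xy)=x$; the correct substitution gives $\gD{2}{1}(xy,y)=xy$. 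The same correction must be made in both occurrences of the substitution inside your formula for $\gR{a}{b}$. With that repaired, the rest of your argument --- the geometric-sum factors for the Minkowski summands, the swap step $f\mapsto x^ay^b\,f(y^{-1},x^{-1})$, the observation that monomial substitutions do not increase the number of terms, and the complexity bookkeeping --- is correct and coincides with the paper's proof.
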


As was said before, the representation we obtain is ``positive'' in the sense that we build a partition of the set $\pS{a}{b}$ using Minkowski sums and disjoint unions of intervals. It is not positive in the sense that all coefficients appearing the representation are positive, as the representations of intervals that we use contain coefficients with opposite signs.

It is important to stress that the representation we obtain makes heavy use of Minkowski sums of intervals. In the language of generating functions, this corresponds to taking products of expressions of the form $\frac{1-x^{ku}}{1-x^u}$ for $k\in\NN$ and $u\in\ZZ^2$. Expanding the numerators of these products by applying the distributive law may lead to a numerator with a number of summands exponential in the number of factors of the product. So the expression we obtain is only short, if products are not expanded. We note that this problem does not occur with the representation we used in the proof of Theorem~\ref{thm:barvinok}.

\begin{proof}
First we note that $c_{a,b}(x,y)=x^{-1}\gD{a}{b}(x,y)$. Now we construct a short representation of $\gD{a}{b}(x,y)$ by applying Lemma~\ref{lem:recursion-parallelepiped} inductively. To that end let $(c_n)_n$ be the sequence defined by $c_1=b$, $c_2=a$ and $c_{i+2}=c_{i} \mod c_{i+1}$ and let $j$ be the index such that $c_{j+1}=1$ and $c_{j+2}=0$. Such a $j$ exists because $\gcd(a,b)=1$.

By Lemma~\ref{lem:recursion-parallelepiped}.2 we can assume without loss of generality $c_1>c_2$. Then for all $i\geq 1$
\begin{eqnarray*}
\gD{c_{i+1}}{c_{i}}(x,y) & = & \gD{c_{i+1}}{c_{i+2}}(xy^{c_i \div c_{i+1}},y) \\
\gR{c_{i+1}}{c_i}(x,y) &=& \frac{1-y^{-(c_i \div c_{i+1})}}{1-y^{-1}}\gD{c_{i+1},c_{i+2}}(xy^{c_i \div c_{i+1}},y) \\
&& + y^{-(c_i \div c_{i+1})}\gR{c_{i+1},c_{i+2}}(xy^{c_i \div c_{i+1}},y) \\
&& + x^ay^b \frac{y^{-1}-y^{-(c_i \div c_{i+1})}}{1-y^{-1}}
\end{eqnarray*}
and
\begin{eqnarray*}
\gR{c_{i}}{c_{i+1}}(x,y)=x^ay^b\gD{c_{i+1}}{c_i}(-y,-x) & \text{ and } & 
\gD{c_{i}}{c_{i+1}}(x,y)=x^ay^b\gR{c_{i+1}}{c_i}(-y,-x).
\end{eqnarray*}

Together with
\begin{eqnarray*}
\gR{c_j}{c_{j+1}}(x,y)=0 & \text{ and } & \gD{c_j}{c_{j+1}}(x,y) = \frac{x-x^a}{1-x}
\end{eqnarray*}
this gives us a recursion formula for $\gD{a}{b}(x,y)$. In each step of the recursion we pick up only a constant number of terms and by Lemma~\ref{lem:euclidean-algorithm} we need only $\mathcal{O}(\log a)$ steps, so the resulting representation has only $\mathcal{O}(\log a)$ terms. As standard arithmetic operations can be computed in time polynomial in the input length, the algorithm runs in time polynomial in $\log a+\log b$.
\end{proof}

Note that by using products, one can give a representation of the lattice points in an interval of length $n$ with $\mathcal{O}(\log n)$ many terms and \emph{without using rational functions}. Using such a representation, the above proof gives a representation with $\mathcal{O}(\log^2 a)$ terms in time polynomial in $\log a + \log b$. Moreover this representation then is positive in that every coefficient appearing in this expression has a positive sign.

\section{Application: Theorem of White}\label{whitethm}

To conclude this paper, we give a partly new proof for a theorem of White \cite[pp.390-394]{White64}, characterizing lattices tetrahedra containing no lattice points but the vertices. Several proofs appeared over the years, e.g. by Noordzij \cite{Noordzij81}, Scarf \cite{Scarf85} (based partly on work by R. Howe) and recently Reznick \cite{Reznick06}, who also gives an overview of the history of this theorem. Furthermore his proof has the advantage that it keeps track of the vertices, at the cost of geometric transparency. We construct our proof based on ideas in \cite{Scarf85} and mainly \cite{Reznick06}.

\vspace{1em}
For $(a,b,n)\in \ZZ^3$ we define the tetrahedron $T_{a,b,n}$ as
\[
 \conv\left\{\mthreevec{0}{0}{0}, \mthreevec{1}{0}{0}, \mthreevec{0}{1}{0}, \mthreevec{a}{b}{n}\right\}.
\]

This should not be confused with the point set $T_{a,b}$ defined in Section~\ref{sec:geometricidea}. As we will not use the latter any more, no ambiguities should arise.

A ``hidden'' parameter, as Reznick writes, is $c=n-a-b+1$ (although he considers a slightly different $c$). We will see that $c$ plays a role equal to the ones of $a$ and $b$ in $T_{a,b,n}$. Also note that $a+b+c=n+1$.

\vspace{1em}
We call two tetrahedra $T$ and $T'$ \defn{equivalent} ($T\approx T'$), if there is an affine lattice transformation which takes the vertices of $T$ to the vertices of $T'$. 

\vspace{1em}
A lattice simplex $T$ is \defn{clean} if there are no non-vertex lattice points on the boundary. If there are also no lattice points in the interior of $T$ (i.e. the vertices are the only lattice points), then we call $T$ \defn{empty}.

\begin{Thm}[White]\label{thmofwhite}
A lattice tetrahedron $T$ is empty if and only if it is equivalent to $T_{0,0,1}$ or to some $T_{1,d,n}$, where $\gcd(d,n)=1$ and $1\leq d\leq n-1$.
\end{Thm}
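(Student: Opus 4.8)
The plan is to establish both directions by reducing everything to the normal form $T_{a,b,n}$, deriving an arithmetic emptiness criterion for it, and then proving a rigidity statement about Sturmian sequences which is the genuine core of the argument.

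\textbf{Step 1 (normal form).} First I would show that every empty, hence clean, tetrahedron $T$ is equivalent to some $T_{a,b,n}$ with $n\geq 1$. Each facet of a clean tetrahedron is a lattice triangle containing no lattice points other than its three vertices (such a point would be a non-vertex boundary point of $T$), so by Pick's theorem it is unimodular, i.e.\ two of its edge vectors $v_1-v_0,\,v_2-v_0$ form a $\ZZ$-basis of the saturated rank-$2$ sublattice $\Lambda$ they span. Since a facet lies in a rational hyperplane with primitive normal, $\Lambda$ is saturated in $\ZZ^3$ and extends to a basis of $\ZZ^3$; the corresponding affine lattice transformation sends the facet to $\conv\{\twovec{0}{0},\,e_1,\,e_2\}$ (lifted to the plane $z=0$) and the remaining vertex to some $(a,b,n)$ with $n\geq 1$. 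Thus $T\approx T_{a,b,n}$, and it suffices to decide emptiness of $T_{a,b,n}$.

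\textbf{Step 2 (arithmetic criterion and averaging).} Slicing $T_{a,b,n}$ at height $z=k$ and writing points in barycentric coordinates, a short computation (using $\ceil{x}=x+\{-x\}$ and $a+b+c=n+1$ with $c=n-a-b+1$) shows that the slice $1\le k\le n-1$ contains a lattice point if and only if $\{-\tfrac{ka}{n}\}+\{-\tfrac{kb}{n}\}\le\tfrac{n-k}{n}$. Hence $T_{a,b,n}$ is empty iff $\{-\tfrac{ka}{n}\}+\{-\tfrac{kb}{n}\}>\tfrac{n-k}{n}$ for all such $k$. Because $-\tfrac{k(a+b+c)}{n}\equiv-\tfrac{k}{n}$, the quantity $m_k:=\{-\tfrac{ka}{n}\}+\{-\tfrac{kb}{n}\}+\{-\tfrac{kc}{n}\}-\tfrac{n-k}{n}$ is a nonnegative integer and the criterion is equivalent to the manifestly $(a,b,c)$-symmetric condition $m_k\geq 1$ for all $k$. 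Summing over $k$ and using $\sum_{k=1}^{n-1}\{-\tfrac{ka}{n}\}=\tfrac{n-\gcd(a,n)}{2}$ gives $\sum_k m_k=\tfrac12\bigl(2n+1-\gcd(a,n)-\gcd(b,n)-\gcd(c,n)\bigr)$; since emptiness forces $\sum_k m_k\geq n-1$, I get $\gcd(a,n)=\gcd(b,n)=\gcd(c,n)=1$ and, moreover, $m_k=1$ for \emph{every} $k$. The converse direction is immediate from the criterion: for $T_{1,d,n}$ it reduces to $\{-\tfrac{kd}{n}\}>0$, i.e.\ $\gcd(d,n)=1$, while $T_{0,0,1}$ is unimodular; and cleanness of the edge from $e_1$ to the apex already forces $\gcd(d,n)=1$ in the normal form, matching the statement.

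\textbf{Step 3 (the pairing lemma --- main obstacle).} It remains to show that an empty $T_{a,b,n}$ with $n\geq 2$ is equivalent to some $T_{1,d,n}$. Setting the four units $r_1=-a,\;r_2=-b,\;r_3=-c,\;r_4=1$ (with $\sum r_i\equiv 0$), the condition $m_k=1$ for all $k$ is exactly $\sum_{i=1}^4\{\tfrac{kr_i}{n}\}=2$ for $1\le k\le n-1$, i.e.\ the four flat staircases $B_{n,r_i}$ are Sturmian sequences that are perfectly balanced against one another. The crux is the \emph{pairing lemma}: four units summing to $0\bmod n$ whose Sturmian sequences sum to a constant must form a negation-closed multiset $\{u,-u,v,-v\}$. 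One implication is clear, since $B_{n,r}$ and $B_{n,n-r}$ are complementary (cf.\ Corollary~\ref{cor:reversal}); the converse is the difficulty. I see two routes. The combinatorial route stays inside the paper's framework: via the even-distribution characterization of Theorem~\ref{thm:equivalences} and an extremal-interval argument (one first shows exactly two of the $r_i$ exceed $\tfrac{n}{2}$, then induces in the Euclidean spirit of Lemma~\ref{lem:recursion}) one rules out any unpaired residue. The analytic route applies the finite Fourier transform on $\ZZ/n$: constancy of $\sum_i\{\tfrac{kr_i}{n}\}$ is equivalent to $\sum_{i=1}^4\cot(\pi r_i^{-1}j/n)=0$ for all $j$, and taking $j=1$ together with the $\QQ$-linear independence of the values $\cot(\pi u/n)$ over primitive residues (the only relation being $\cot(\pi(n-u)/n)=-\cot(\pi u/n)$) forces the $\pm1$-combination to cancel in pairs, i.e.\ negation-closure. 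Granting the pairing lemma, the invariant of $T_{a,b,n}$ up to equivalence is the multiset $\{r_1,\dots,r_4\}\subset(\ZZ/n)^\ast$ up to the vertex-permuting $S_4$ and simultaneous multiplication by a unit (this is precisely the ``hidden parameter'' symmetry among $a,b,c$); scaling by $u^{-1}$ turns $\{u,-u,v,-v\}$ into $\{1,-1,d,-d\}$ with $d=v u^{-1}$, which is the invariant of $T_{1,d,n}$, $\gcd(d,n)=1$. The case $n=1$ collapses to $T_{0,0,1}$. The pairing lemma is the genuine obstacle; Steps 1 and 2 are essentially bookkeeping.
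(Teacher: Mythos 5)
Your Steps 1--2 are correct and genuinely different from the paper's setup: where the paper cites Reeve's theorem (Theorem~\ref{thmofreeve}) and then proves Lemma~\ref{scarf3.2} by the Scarf-style argument with $g(k)+g(n-k)=n+4$, you re-derive the normal form directly from Pick's theorem (unimodular facets) and obtain both the conditions $\gcd(a,n)=\gcd(b,n)=\gcd(c,n)=1$ and the perfect-balance condition $m_k=1$ for \emph{all} $k$ in one stroke from the averaging identity $\sum_{k=1}^{n-1}\{-ka/n\}=\tfrac{n-\gcd(a,n)}{2}$. I checked the slice criterion, the integrality of $m_k$, the equivalence $m_k=1\Leftrightarrow g(k)=k+2$ (so your condition is exactly the paper's $f(k)=1$), and the converse for $T_{1,d,n}$; this part is sound and arguably cleaner than the paper's.

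The gap is in Step 3, which you correctly identify as the core --- and it is exactly the part the paper regards as its new contribution (Lemma~\ref{F&F}, proved by a self-contained table/interval argument from the even-distribution characterization, Theorem~\ref{thm:equivalences}\ref{eq:even}). Your ``combinatorial route'' is not a proof: ``exactly two of the $r_i$ exceed $n/2$, then induce in the Euclidean spirit'' names tools but contains no argument, and the actual combinatorics (the paper's propagation argument around the penultimate $1$ of $B_{n,c}$) is nontrivial. Your ``analytic route'' is a valid strategy --- the Fourier computation giving $\sum_i\cot(\pi r_i^{-1}j/n)=0$ is correct, and pairwise cancellation does follow from linear independence --- but it outsources the entire difficulty to the Chowla--Okada theorem that the only $\QQ$-linear relations among $\cot(\pi u/n)$, $\gcd(u,n)=1$, come from $u\mapsto n-u$; that is a true but deep result (its known proofs use non-vanishing of $L(1,\chi)$ for odd characters), i.e.\ vastly heavier machinery than the elementary Sturmian combinatorics the paper deploys. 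Separately, your closing claim that the multiset $\{r_1,\dots,r_4\}$ up to $S_4$ and unit scaling is a \emph{complete} equivalence invariant is asserted without proof; fortunately it is not needed: negation-closure of $\{-a,-b,-c,1\}$ forces $-1$ to occur among $-a,-b,-c$ (for $n>2$), i.e.\ one of $a,b,c$ equals $1$, which is precisely the paper's Lemma~\ref{F&F}, and the coordinate symmetry of Lemma~\ref{scarf3.1} then yields $T\approx T_{1,d,n}$. So: accepted as a reduction to a citable deep theorem, your proposal works; as a self-contained proof at the paper's level, the key lemma remains unproven.
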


\begin{proof}[Proof of Theorem~\ref{thmofwhite} (Necessity)]
As we easily see, $T_{0,0,1}$ is empty. So we consider  $T_{1,d,n}$, where $\gcd(d,n)=1$ and $1\leq d\leq n-1$. Let $w\in\ZZ^3\cap T_{1,d,n}$. As the first coordinate of all vertices of $T_{1,d,n}$ is either $0$ or $1$, we know $w_1\in\{0,1\}$.

If $w_1=0$, then $w\in\conv\left\{\threevec{0}{0}{0},\threevec{0}{1}{0}\right\}$ and $w$ is a vertex.

If $w_1=1$, then $w\in\conv\left\{\threevec{1}{0}{0},\threevec{1}{d}{n}\right\}$ . As $\gcd(d,n)=1$ the vector $\threevec{0}{d}{n}$ is primitive and so $w$ is again a vertex.

Therefore $T_{1,d,n}$ and any equivalent tetrahedron is empty. This proves that lattice tetrahedra of the form $T_{0,0,1}$ or $T_{1,d,n}$ with  $\gcd(d,n)=1$ and $1\leq d\leq n-1$ are necessarily empty.
\end{proof}
To show the sufficiency, we use the following theorem by Reeve. A nice proof can be found in \cite[pp.5-6]{Reznick06}.
\begin{Thm}[\cite{Reeve57}]\label{thmofreeve}
 The lattice tetrahedron $T$ is clean if and only if $T\approx T_{0,0,1}$ or $T\approx T_{a,b,n}$, where 
\[
 n\geq 2,\quad 0 \leq a,b\leq n-1 \quad\text{ and }\quad \gcd(a,n)=\gcd(b,n)=\gcd(1-a-b,n)=1.
\]
\end{Thm}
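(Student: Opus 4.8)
The plan is to prove both directions by reducing everything to a single elementary fact about lattice triangles in $\ZZ^3$. Throughout write $T=\conv\{w_0,w_1,w_2,w_3\}$, and call a $2$-dimensional lattice triangle $\conv\{0,u,v\}\subset\ZZ^3$ \emph{empty} if it contains no lattice points besides its three vertices, neither in its relative interior nor on its edges. The preliminary lemma I would record is: $\conv\{0,u,v\}$ is empty if and only if the cross product $u\times v$ is a primitive vector of $\ZZ^3$. This holds because the sublattice $\ZZ u+\ZZ v$ has covolume $\|u\times v\|$ while the full plane lattice $\ZZ^3\cap\mathrm{span}_\RR(u,v)$ has covolume $\|u\times v\|/g$, where $g$ is the gcd of the coordinates of $u\times v$; emptiness is exactly the statement that $u,v$ form a basis of that plane lattice, i.e. that the two covolumes agree, i.e. that $g=1$. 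I would pair this with the companion fact that a rank-$2$ saturated sublattice of $\ZZ^3$ can always be completed to a $\ZZ$-basis.

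For necessity (clean $\Rightarrow$ normal form) I would first observe that if $T$ is clean then each of its four triangular faces is an empty lattice triangle, since any non-vertex lattice point on a face or edge would be a non-vertex boundary point of $T$. Applying the preliminary lemma to the face $\conv\{w_0,w_1,w_2\}$, the edge vectors $w_1-w_0,\,w_2-w_0$ form a basis of the plane lattice and, being saturated, extend to a basis of $\ZZ^3$; in that basis I may assume $w_0=\threevec{0}{0}{0}$, $w_1=\threevec{1}{0}{0}$, $w_2=\threevec{0}{1}{0}$, $w_3=\threevec{a}{b}{n}$. As $T$ is full-dimensional, $n\neq 0$, and composing with a reflection (a lattice transformation) I arrange $n\geq 1$. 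The shears $\threevec{x}{y}{z}\mapsto\threevec{x-kz}{y}{z}$ and $\threevec{x}{y}{z}\mapsto\threevec{x}{y-kz}{z}$ fix the first face and reduce $a,b$ modulo $n$, so I may take $0\le a,b\le n-1$; this exhibits $T\approx T_{a,b,n}$, and if $n=1$ then $a=b=0$, giving $T\approx T_{0,0,1}$.

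It remains to extract the gcd conditions, which I expect to be the routine but most computation-heavy step. For $n\geq 2$ I would apply the cross-product criterion to the three remaining faces: $\conv\{w_0,w_1,w_3\}$, $\conv\{w_0,w_2,w_3\}$ and $\conv\{w_1,w_2,w_3\}$ yield the cross products $\threevec{0}{-n}{b}$, $\threevec{n}{0}{-a}$ and $\threevec{n}{n}{1-a-b}$, whose primitivity is equivalent to $\gcd(b,n)=\gcd(a,n)=\gcd(1-a-b,n)=1$; these are precisely the asserted conditions. Conversely, for sufficiency I would run the computation backwards: $T_{0,0,1}$ is the unimodular standard simplex and is visibly clean, while for $T_{a,b,n}$ the three gcd hypotheses make all four face cross products primitive (the base face $\conv\{w_0,w_1,w_2\}$ has cross product $\threevec{0}{0}{1}$), hence all four faces empty. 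An empty face has no boundary lattice points, so its edges are primitive; thus no non-vertex lattice point lies on any edge or face, and $T_{a,b,n}$ is clean. Since cleanness is an affine-lattice invariant, every $T\approx T_{0,0,1}$ or $T\approx T_{a,b,n}$ is clean as well.

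The main obstacle is the preliminary lemma identifying emptiness of a lattice triangle with primitivity of the cross product of its edge vectors, together with the saturated-sublattice extension step that legitimizes the normalization $w_0,w_1,w_2\mapsto\threevec{0}{0}{0},\threevec{1}{0}{0},\threevec{0}{1}{0}$. Once these are in place the rest is bookkeeping: three determinant (cross-product) computations and two shear reductions, with the edge-primitivity conditions following for free since, for instance, $\gcd(a,b,n)\mid\gcd(a,n)=1$.
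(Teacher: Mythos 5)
The paper does not prove this statement at all: Theorem~\ref{thmofreeve} is quoted from Reeve, and the authors explicitly defer to Reznick (pp.~5--6 of the cited paper) for a proof. So there is no internal argument to compare yours against; what you have written is a genuine proof where the paper supplies none, and it is correct. Your preliminary lemma checks out: for linearly independent $u,v\in\ZZ^3$, writing $u\times v=g\,w$ with $w$ primitive, the plane lattice $\ZZ^3\cap\operatorname{span}_{\RR}(u,v)$ has covolume $\lVert w\rVert$ (because $x\mapsto\langle x,w\rangle$ maps $\ZZ^3$ onto $\ZZ$ with this plane lattice as kernel), while $\ZZ u+\ZZ v$ has covolume $\lVert u\times v\rVert$, so the index is $g$; and emptiness of $\conv\{0,u,v\}$ is equivalent to $u,v$ being a basis of the plane lattice (one direction is that unimodular triangles are empty; the other follows from the standard central-symmetry argument on the half-open parallelogram, or from Pick's theorem). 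Cleanness of a tetrahedron is exactly emptiness of all four facets, since every non-vertex boundary lattice point lies on a facet, so your reduction is faithful. The normalization is legitimate: a clean base facet is empty, hence its edge vectors form a basis of a saturated rank-$2$ sublattice, which extends to a basis of $\ZZ^3$; the reflection $(x,y,z)\mapsto(x,y,-z)$ and the shears are lattice transformations fixing the base facet, giving $0\le a,b\le n-1$, $n\ge1$, with $n=1$ forcing $T\approx T_{0,0,1}$. Your three cross products are correct, $\threevec{0}{-n}{b}$, $\threevec{n}{0}{-a}$, $\threevec{n}{n}{1-a-b}$, and their primitivity is precisely $\gcd(b,n)=\gcd(a,n)=\gcd(1-a-b,n)=1$ (using $\gcd(n,n,1-a-b)=\gcd(n,1-a-b)$), while the base facet contributes the always-primitive $\threevec{0}{0}{1}$; running this equivalence in both directions yields the full biconditional. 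This is in the same classical spirit as the normalization-based proof the paper points to, but your packaging through the single cross-product primitivity criterion makes all four facets uniform and reduces the theorem to three determinant computations. Two points you should make explicit in a final write-up: the one-line proof of the emptiness-equals-basis equivalence (you currently assert it), and the remark that lattice transformations may have determinant $-1$, which is what licenses the reflection arranging $n\ge1$.
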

We will now prove the sufficiency to motivate the rest of the section, where we anticipate the results that are stated and proved only afterwards.

\begin{proof}[Proof of Theorem~\ref{thmofwhite} (Sufficiency)]
If $T$ is an empty lattice tetrahedron, then in particular it is clean and thus by Theorem~\ref{thmofreeve} equivalent to $T_{0,0,1}$ or some $T_{a,b,n}$. If $T\approx T_{0,0,1}$, we are done. Otherwise $T_{a,b,n}\approx T$ is of course also empty and therefore fulfills the conditions for Lemma~\ref{scarf3.2}.

\vspace{0.5em}
This in turn enables us to use Lemma~\ref{F&F} which tells us that one of $a,b,c$ equals $1$. Finally, we can see by Lemma~\ref{scarf3.1} that we can choose the order of the coordinates freely, so we get $T\approx T_{1,d,n}$, where $\gcd(d,n)=1$ and $1\leq d\leq n-1$, again by Theorem~\ref{thmofreeve}.
\end{proof}

It turns out to be useful to describe a clean tetrahedron $T_{a,b,n}$ in a slightly different way:

\begin{Lemma}\label{scarf3.1}
Let $T_{a,b,n}$ be empty and $0 \leq a,b\leq n-1$. Then 
\[
 T_{a,b,n}\approx \conv\left\{\mthreevec{1}{0}{0}, \mthreevec{0}{1}{0}, \mthreevec{0}{0}{1}, \mthreevec{a}{b}{c}\right\},
\quad\text{ and } c\geq 1.
\]
\end{Lemma}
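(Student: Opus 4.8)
The plan is to prove the two assertions separately: the affine equivalence $\approx$, and the inequality $c\geq 1$. For the equivalence I would simply exhibit an explicit unimodular affine map and evaluate it on the four vertices; for $c\geq 1$ I would use emptiness to produce a forbidden lattice point. The only genuinely creative inputs are the choice of the transformation matrix and the choice of the witness point.

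For the equivalence I propose the affine lattice transformation $\phi(v)=Mv+\mthreevec{0}{0}{1}$, where
\[
M = \begin{pmatrix} 1 & 0 & 0 \\ 0 & 1 & 0 \\ -1 & -1 & 1 \end{pmatrix}.
\]
Since $M$ is lower unitriangular we have $\det M=1$, so $M\in GL_3(\ZZ)$ and $\phi$ is an affine lattice transformation. It then remains to evaluate $\phi$ on the four vertices. A direct computation shows that $\phi$ sends $\mthreevec{0}{0}{0},\mthreevec{1}{0}{0},\mthreevec{0}{1}{0}$ to $\mthreevec{0}{0}{1},\mthreevec{1}{0}{0},\mthreevec{0}{1}{0}$, and sends the apex $\mthreevec{a}{b}{n}$ to $\mthreevec{a}{b}{-a-b+n}+\mthreevec{0}{0}{1}=\mthreevec{a}{b}{c}$, using $c=n-a-b+1$. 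Hence $\phi$ carries the vertex set of $T_{a,b,n}$ onto $\left\{\mthreevec{1}{0}{0},\mthreevec{0}{1}{0},\mthreevec{0}{0}{1},\mthreevec{a}{b}{c}\right\}$, which is exactly the asserted equivalence. The matrix $M$ is chosen precisely so that the third coordinate of the apex collapses from $n$ to $c$ while the base triangle $\mathrm{conv}\{\mthreevec{0}{0}{0},\mthreevec{1}{0}{0},\mthreevec{0}{1}{0}\}$ is lifted into the plane $x+y+z=1$.

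It remains to show $c\geq 1$, and here emptiness is essential, since the bounds $0\leq a,b\leq n-1$ alone permit $a+b$ up to $2n-2$. If $a=0$ then $c=n-b+1\geq 2$, and if $b=0$ then $c=n-a+1\geq 2$, so I may assume $a,b\geq 1$, which forces $n\geq a+1\geq 2$. I would then argue by contradiction: assume $c\leq 0$, i.e. $a+b\geq n+1$, and show that $\mthreevec{1}{1}{1}\in T_{a,b,n}$. Computing the facet inequalities of $T_{a,b,n}$ (via cross products of the edge vectors, with care for orientation) gives $z\geq 0$, $nx\geq az$, $ny\geq bz$, and $n(x+y)\leq n+(a+b-1)z$. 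Testing $\mthreevec{1}{1}{1}$: the first three read $1\geq 0$, $n\geq a$, $n\geq b$, all true since $a,b\leq n-1$, and the last reads $2n\leq n+(a+b-1)$, i.e.\ $a+b\geq n+1$, which is precisely the assumption. Thus $\mthreevec{1}{1}{1}\in T_{a,b,n}$; since $n\geq 2$ it differs from the apex, and it clearly differs from the three base vertices, so it is a non-vertex lattice point, contradicting emptiness. Hence $c\geq 1$.

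I expect the second part to be the main obstacle: spotting the witness $\mthreevec{1}{1}{1}$ and checking it against the facet description. Deriving the four facet inequalities is the one computational step, but once they are in hand the contradiction is immediate. As an alternative one could transport the emptiness contradiction into the symmetric model $\mathrm{conv}\left\{\mthreevec{1}{0}{0},\mthreevec{0}{1}{0},\mthreevec{0}{0}{1},\mthreevec{a}{b}{c}\right\}$ from the first part, but testing $\mthreevec{1}{1}{1}$ directly in $T_{a,b,n}$ is cleaner.
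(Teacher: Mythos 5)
Your proof is correct and follows essentially the same route as the paper: the identical transformation matrix $\left(\begin{smallmatrix}1&0&0\\0&1&0\\-1&-1&1\end{smallmatrix}\right)$ with shift $\left(\begin{smallmatrix}0\\0\\1\end{smallmatrix}\right)$ for the equivalence, and the same witness point $\left(\begin{smallmatrix}1\\1\\1\end{smallmatrix}\right)$ to contradict emptiness when $a+b\geq n+1$. The only cosmetic difference is that the paper certifies $\left(\begin{smallmatrix}1\\1\\1\end{smallmatrix}\right)\in T_{a,b,n}$ by writing down explicit convex-combination coefficients, whereas you check it against the facet inequalities; both verify the same membership claim.
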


In the next two proofs we follow mostly \cite[pp.411f]{Scarf85}.
\begin{proof} 
If $n<a+b$, then 
\[
 \mthreevec{1}{1}{1}=\alpha_1\mthreevec{0}{0}{0}+ \alpha_2 \mthreevec{1}{0}{0}+ \alpha_3 \mthreevec{0}{1}{0}+ \alpha_4 \mthreevec{a}{b}{n}, \text{ where }
\]
$\alpha_4=\frac{1}{n},\quad\alpha_3=1-\frac{b}{n},\quad\alpha_2=1-\frac{a}{n},$ and $\alpha_1=1-\alpha_2-\alpha_3-\alpha_4= \frac{a+b-n-1}{n}$. But this means that $0\leq \alpha_1<1$ and $0<\alpha_2,\alpha_3,\alpha_4<1$, and thus $\threevec{1}{1}{1}\in T_{a,b,n}$, contradicting the assumption that $T_{a,b,n}$ is empty.

So we know that $n\geq a+b$, which proves $c\geq 1$. The affine lattice transformation
\[
x \mapsto 
\begin{pmatrix}
1 & 0 & 0 \\
0 & 1 & 0 \\
-1 & -1 & 1
\end{pmatrix}
x +
\begin{pmatrix}0\\0\\1\end{pmatrix}
\]
gives us the desired form for $T_{a,b,n}$.
\end{proof}
The key ingredient for the rest of the proof of Theorem~\ref{thmofwhite} is to look at the Beatty sequences for $\frac{a}{n}$, $\frac{b}{n}$ and $\frac{c}{n}$ simultaneously. For this purpose let us define the sum of the sequences, i.e.
\begin{eqnarray*}
 f(k) & \define & B_{n,a}(k) + B_{n,b}(k) + B_{n,c}(k) \\ 
&=& \floor{\frac{a}{n}k}+\floor{\frac{b}{n}k}+\floor{\frac{c}{n}k}-\floor{\frac{a}{n}(k-1)}-\floor{\frac{b}{n}(k-1)} -\floor{\frac{c}{n}(k-1)}.
\end{eqnarray*}
This function has a strong connection to Theorem~\ref{thmofwhite} as we see next.

\begin{Lemma}\label{scarf3.2}
If $T_{a,b,n}$ is empty, then $f(k)=1$ for $k=2,\ldots,n-1$.
\end{Lemma}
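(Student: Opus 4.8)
The plan is to route everything through the partial sums
$S(k) := \floor{\frac{a}{n}k} + \floor{\frac{b}{n}k} + \floor{\frac{c}{n}k}$, so that $f(k) = S(k)-S(k-1)$, and to reduce the whole statement to the single identity $S(k) = k-1$ for all $1 \le k \le n-1$. Indeed, once this is established, $f(k) = (k-1) - (k-2) = 1$ for $2 \le k \le n-1$. To make emptiness usable I would first replace $T_{a,b,n}$ by the equivalent, fully symmetric simplex $\conv\left\{\mthreevec{1}{0}{0},\mthreevec{0}{1}{0},\mthreevec{0}{0}{1},\mthreevec{a}{b}{c}\right\}$ provided by Lemma~\ref{scarf3.1}; here the first three vertices have coordinate sum $1$ and the last has coordinate sum $a+b+c = n+1$. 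On the slice where the coordinate sum of a point equals $k+1$ (i.e.\ $\beta_4 = k/n$ in barycentric coordinates), a lattice point is exactly a triple $p$ with $p_1+p_2+p_3 = k+1$ and $p_i \ge \ceil{\frac{k x_i}{n}}$ for $x=(a,b,c)$, and such a slice contains a lattice point if and only if $\ceil{\frac{ka}{n}} + \ceil{\frac{kb}{n}} + \ceil{\frac{kc}{n}} \le k+1$.

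Now I would feed in emptiness. The four vertices lie only on the slices $k=0$ and $k=n$, so for $1 \le k \le n-1$ emptiness forces the slice to be free of lattice points, that is $\ceil{\frac{ka}{n}} + \ceil{\frac{kb}{n}} + \ceil{\frac{kc}{n}} \ge k+2$. Since an empty tetrahedron is clean we may use the $\gcd$ conditions of Theorem~\ref{thmofreeve}, so $\gcd(a,n)=\gcd(b,n)=\gcd(c,n)=1$ and every residue $kx \bmod n$ is nonzero for $1 \le k \le n-1$; hence $\ceil{\frac{kx}{n}} = \floor{\frac{kx}{n}}+1$ and the inequality becomes $S(k) \ge k-1$. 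Writing $\Sigma(k) := (ka \bmod n) + (kb \bmod n) + (kc \bmod n)$ and using $\floor{y} = y - \{y\}$ together with $a+b+c = n+1$, this is equivalent to $\Sigma(k) \le n+k$.

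The crux, and the step I expect to be the real obstacle, is upgrading this one-sided bound to an equality: a naive telescoping only pairs $S(k) \ge k-1$ with $\sum_{k=1}^{n} f(k) = n+1$, which does not pin down the individual values. The trick I would use is the reflection $k \mapsto n-k$. Because $((n-k)x \bmod n) = n - (kx \bmod n)$ when $\gcd(x,n)=1$, one has $\Sigma(n-k) = 3n - \Sigma(k)$; applying the emptiness bound $\Sigma(\,\cdot\,) \le n + (\,\cdot\,)$ at the index $n-k$ (still in $\{1,\dots,n-1\}$) gives $3n - \Sigma(k) \le 2n - k$, i.e.\ $\Sigma(k) \ge n+k$. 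Combined with the previous inequality this yields $\Sigma(k) = n+k$, hence $S(k) = k-1$ for every $1 \le k \le n-1$, and therefore $f(k)=1$ for $2 \le k \le n-1$. The remaining points are routine and follow from the symmetry between $a$, $b$, $c$: one checks $1 \le a,b,c \le n-1$ (so all floors vanish at $k=1$ and the reflection stays in range) and the elementary count of lattice points on a slice used above.
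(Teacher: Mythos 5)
Your proposal is, at its core, the same argument as the paper's: your slice criterion is the paper's construction of a lattice point witnessing $\ceil{\frac{a}{n}k}+\ceil{\frac{b}{n}k}+\ceil{\frac{c}{n}k}\leq k+1$, your emptiness bound is the paper's $g(k)\geq k+2$, and your reflection trick $\Sigma(k)+\Sigma(n-k)=3n$ is precisely the paper's identity $g(k)+g(n-k)=a+b+c+3=n+4$ written in residue notation, with the same pairing of the bounds at $k$ and $n-k$ to force equality. So the comparison comes down to the one step you handle differently, and that step contains a genuine gap.

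The gap is the coprimality step. You justify $\gcd(a,n)=\gcd(b,n)=\gcd(c,n)=1$ by citing Theorem~\ref{thmofreeve}, but that theorem, as stated, is existential: a clean tetrahedron is equivalent to $T_{0,0,1}$ or to \emph{some} $T_{a',b',n'}$ whose parameters satisfy the gcd conditions. It does not assert that the parameters of the given representation $T_{a,b,n}$ satisfy them, and deducing this requires a further argument (it is exactly the nontrivial necessity half of Reeve's characterization, which the paper never states in that per-parametrization form). The coprimality is not cosmetic for you: without it, the conversion $\ceil{\frac{kx}{n}}=\floor{\frac{kx}{n}}+1$ can fail, and the reflection identity $((n-k)x \bmod n) = n-(kx \bmod n)$ breaks precisely when $kx\equiv 0 \pmod{n}$, so both of your inequalities $\Sigma(k)\leq n+k$ and $\Sigma(k)\geq n+k$ lose their justification. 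The paper instead derives coprimality from emptiness, and the fix fits smoothly into your formalism: for each $x\in\{a,b,c\}$ one has $\ceil{\frac{kx}{n}}+\ceil{\frac{(n-k)x}{n}} = x+\epsilon_x(k)$, where $\epsilon_x(k)=1$ if $kx\not\equiv 0 \pmod{n}$ and $\epsilon_x(k)=0$ otherwise; summing over $x$ and using the emptiness bound at both $k$ and $n-k$ gives $(n+1)+\sum_x \epsilon_x(k) \geq (k+2)+(n-k+2)$, hence $\sum_x\epsilon_x(k)= 3$ for every $1\leq k\leq n-1$, i.e.\ none of the residues $ka, kb, kc$ vanishes modulo $n$. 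That is exactly the coprimality you need, obtained from emptiness rather than imported; with this paragraph inserted, your proof is complete and essentially coincides with the paper's.
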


\begin{proof}
Let us first define the function $g(k)\define \ceil{\frac{a}{n}k}+\ceil{\frac{b}{n}k}+\ceil{\frac{c}{n}k}$. An easy computation verifies that for $a,b,c$ relatively prime to $n$ and $2\leq k\leq n-1$, we have $f(k)=g(k)-3-(g(k-1)-3)=g(k)-g(k-1)$. 

We will now show that $g(k)=k+2$ for $k=1,\ldots,n-1$ and also that $a,b,c$ are relatively prime to $n$.

\vspace{0.5em}
Suppose that $g(k)\leq k+1$ for some $k$. Then we can define a lattice point $\threevec{p}{q}{r}$ with
\[
 p\geq \ceil{\frac{a}{n}k},\quad q\geq \ceil{\frac{b}{n}k},\quad r\geq\ceil{\frac{c}{n}k},
\]
and $p+q+r=k+1$. But then we find $\alpha_1,\ldots,\alpha_4$ with 
\[
 \begin{pmatrix}
  p\\
  q\\
  r
 \end{pmatrix}=
\begin{pmatrix}
 1 & 0 & 0 & a\\
 0 & 1 & 0 & b\\
 0 & 0 & 1 & c
\end{pmatrix}
\;\begin{pmatrix}
   \alpha_1\\
\vdots\\
\alpha_4
  \end{pmatrix}, \quad \text{ where }
\]
$\alpha_1=p-\frac{a}{n}k,\quad\alpha_2=q-\frac{b}{n}k,\quad\alpha_3=r-\frac{c}{n}k,\quad\alpha_4=\frac{1}{n}k\quad$ and 
\begin{eqnarray*}
 \sum_{i=1}^4 \alpha_i&=& p+q+r-\frac{a}{n}k-\frac{b}{n}k-\frac{c}{n}k+\frac{k}{n}\\
&=&k+1-\frac{(a+b+c-1)k}{n}\quad=\quad k+1-k\\
&=& 1.
\end{eqnarray*}
As this means that $\left(\begin{smallmatrix}p\\q\\r\end{smallmatrix}\right)$ is in $\conv\left\{\threevec{1}{0}{0}, \threevec{0}{1}{0}, \threevec{0}{0}{1},\threevec{a}{b}{c}\right\}\approx T_{a,b,n}$, we have a contradiction.

\vspace{0.5em}
So we know that $g(k)\geq k+2$.

If one of $a,b,c$ is not relatively prime to $n$, say it is $c$, then $\frac{c}{n}k\in \ZZ$ for some $k\in\{1,\ldots,n-1\}$. So we get 
\begin{eqnarray*}
 g(k)+g(n-k)&=&\ceil{\frac{a}{n}k}+\ceil{\frac{b}{n}k}+\ceil{\frac{c}{n}k}+ \ceil{a-\frac{a}{n}k}+\ceil{b-\frac{b}{n}k}+\ceil{c-\frac{c}{n}k}\\
&=&\ceil{\frac{a}{n}k}+\ceil{\frac{b}{n}k}+\ceil{\frac{c}{n}k}+ a-\floor{\frac{a}{n}k}+b-\floor{\frac{b}{n}k}+c-\floor{\frac{c}{n}k},
\end{eqnarray*}
and by the assumption on $c$ this is at most $a+b+c+2=n+3$.
But then either $g(k)\leq k+1$ or $g(n-k)\leq n-k+1$, which cannot be true for an empty $T_{a,b,n}$.

\vspace{0.5em}
If they are all relatively prime to $n$, then $g(k)+g(n-k)=a+b+c+3= n+4$.
Together with $g(k)\geq k+2$ and $g(n-k)\geq n-k+2$ we get the desired equality.

\vspace{0.5em}
We have now seen that $g(k)=k+2$ for $k=1,\ldots,n-1$. Together with $f(k)=g(k)-g(k-1)$ this implies $f(k)=1$ for $k=2,\ldots,n-1$.
\end{proof}

This is all we need to finish the proof:
\begin{Lemma}\label{F&F}
 If $f(k)=1$ for $k=2,\ldots,n-1$, then at least one of $a,b$ or $c$ equals $1$.
\end{Lemma}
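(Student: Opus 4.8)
The plan is to turn the hypothesis into a statement about residues $ak \mod n$, to normalise so that one parameter dominates, and then to descend using the reduction operation of Section~\ref{sec:geometricidea}. First I would record the elementary fact that $B_{n,a}(k)=1$ exactly when $ak \mod n < a$: adding $a$ to $a(k-1) \mod n$ crosses a multiple of $n$ iff the new residue drops below $a$. Thus $f(k)=1$ says that at each interior step precisely one of $B_{n,a},B_{n,b},B_{n,c}$ increments. Telescoping~(\ref{eqn:Beatty-2}) gives $\floor{\frac{a}{n}k}+\floor{\frac{b}{n}k}+\floor{\frac{c}{n}k}=k-1$ for $k=1,\dots,n-1$, equivalently, using $a+b+c=n+1$ and that $a,b,c$ are coprime to $n$, the identity $(ak\mod n)+(bk\mod n)+(ck\mod n)=n+k$. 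Since $f$ is symmetric in $a,b,c$ I may order them freely. Evaluating at $k=2$, one has $B_{n,a}(2)=1\Leftrightarrow 2a>n$, and $f(2)=1$ forces \emph{exactly one} of $a,b,c$ to exceed $n/2$ (at most one can, since two summands above $n/2$ would push the third below $1$). After permuting I therefore assume $a>n/2\geq b\geq c$.

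Next I would extract the governing identity. By~(\ref{eqn:Beatty-2}) one has $B_{n,a}(k)+B_{n,n-a}(k)=1$ for every interior $k$, so on $\{2,\dots,n-1\}$ the hypothesis collapses to
\[
B_{n,b}(k)+B_{n,c}(k)=B_{n,n-a}(k),
\]
a \emph{disjoint} decomposition of the $1$s of $B_{n,a'}$ with $a'=n-a=b+c-1<n/2$. Geometrically the corner columns of the flat staircase $S_{n,a'}$ are split, without overlap, into those of $S_{n,b}$ and those of $S_{n,c}$. The idea is then to sub-sample along the $1$s of $B_{n,a'}$: writing $p_1<\dots<p_{a'}$ for its firing positions in one period, the sequences $j\mapsto B_{n,b}(p_j)$ and $j\mapsto B_{n,c}(p_j)$ have period $a'$, sum to $1$ on the interior and to $2$ at the wrap, and carry $b$ resp.\ $c$ ones. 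This is exactly the reduction of Lemma~\ref{lem:block-sequences} applied to the coarse sequence $B_{n,a'}$; it should identify $j\mapsto B_{n,b}(p_j)$ with a Beatty sequence $B_{a',b}$ \emph{up to shift} (and likewise for $c$), collapsing everything to the two-term identity $B_{a',b}(j)+B_{a',c}(j)=1$ off the wrap over the strictly smaller modulus $a'=b+c-1$. Since $b+c=a'+1$, a direct inspection of this reduced identity — for the shift that the reduction actually produces, and using Corollary~\ref{cor:reversal} to absorb the reversal introduced by the reflections — should force $b=1$ or $c=1$; as $a>n/2>1$ this yields $\min(a,b,c)=1$.

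The hard part will be the shift. The shift-\emph{free} version of the reduced statement is genuinely false: for example $B_{4,2}$ and $B_{4,3}$ admit cyclic shifts whose pointwise sum is $1$ off a single doubled position, even though $2,3\geq 2$. Hence the conclusion cannot follow from the coarse combinatorics alone; one must pin the shift produced by the reduction to the exact value dictated by $a,b,c$ and $n$, and it is only for that value that the reduced identity becomes impossible once all parameters are $\geq 2$. This is precisely the bookkeeping that the ``$\equiv$ up to shift'' framework, together with the reflections of Lemmas~\ref{lem:diag-reflection} and~\ref{lem:org-reflection} and the reduction of Lemma~\ref{Lem:reduction}, is built to control: tracking which firing of $B_{n,a'}$ plays the role of the origin through the reduction is where I expect essentially all of the work to lie.
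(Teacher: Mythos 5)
Your preparatory reductions are correct and could all be proved as stated: the telescoped identity $\floor{\frac{a}{n}k}+\floor{\frac{b}{n}k}+\floor{\frac{c}{n}k}=k-1$, the normalization $a>n/2\geq b\geq c$ from $f(2)=1$, the complementation $B_{n,a}(k)+B_{n,n-a}(k)=1$ for $2\leq k\leq n-1$, and hence the disjoint decomposition $B_{n,b}(k)+B_{n,c}(k)=B_{n,a'}(k)$ with $a'=n-a=b+c-1$. But from there on the proposal is a plan, not a proof, and the gap sits exactly where you say you ``expect essentially all of the work to lie.'' Concretely, two steps are missing. First, the identification of $j\mapsto B_{n,b}(p_j)$ with $B_{a',b}$ up to shift is not ``exactly the reduction of Lemma~\ref{lem:block-sequences}'': that lemma computes the block sequence of a \emph{single} Beatty sequence (the gaps between its own 1s), which is a different operation from evaluating one Beatty sequence along the 1-positions of another. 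The claim is plausible given that the 1s of $B_{n,b}$ sit inside the 1s of $B_{n,a'}$, but it needs its own argument, and in a form that produces the shift explicitly. Second, the concluding impossibility is entirely absent. Your own example shows why this is not routine: the reduced identity up to an \emph{arbitrary} shift is satisfiable with $b,c\geq 2$ (equivalently, adding a single 1 per period to $B_{4,2}$ yields $B_{4,3}$ up to shift), so any contradiction must come from the specific shift that the reduction produces, and no mechanism is offered for pinning it down or exploiting it. Note also that these intermediate claims cannot be sanity-checked on examples: the lemma itself says there are no instances of the hypothesis with $b,c\geq 2$, so everything must be deduced from the hypothesis --- and that deduction \emph{is} the proof.

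By contrast, the paper's proof never leaves the level of what you call the coarse combinatorics, and needs no shift bookkeeping. Assuming $c$ is the smallest parameter and $c\geq 2$, it takes $m$ to be the position of the last interior 1 of $B_{n,c}$, observes $B_{n,a}(m)=B_{n,b}(m)=0$ from $f(m)=1$, and then propagates the even-distribution property (Theorem~\ref{thm:equivalences}\ref{eq:even}) back and forth among the three sequences to conclude that $B_{n,b}$ vanishes on all of $m-1,\ldots,n-1$. This exhibits a block of 0s in $B_{n,b}$ strictly longer than the longest block of 0s in $B_{n,c}$, which by even distribution forces $b<c$, contradicting the minimality of $c$. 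If you wanted to complete your descent, you would in effect have to prove a statement at least this strong just to control the shift through the reduction, so I would recommend abandoning the descent in favor of a direct even-distribution argument of this kind.
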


This is the new part of the proof. It builds onto the observations about Sturmian sequences developed in the first half of this article. In particular we make use of the fact that Sturmian sequences are block balanced, and, more generally, that the 1s in a Sturmian sequence are evenly distributed. See Theorem~\ref{thm:equivalences}\ref{eq:even}.

\begin{proof}
Suppose not. Without loss of generality $c<a,b$. We consider the intervals $\interval{B_{n,a}}{1}{n},\interval{B_{n,b}}{1}{n}$ and $\interval{B_{n,c}}{1}{n}$. As $c\geq 2$, there is another 1 in $\interval{B_{n,c}}{1}{n}$ apart from $B_{n,c}(n)=1$. Let $m$ be the position of the 1 preceding $B_{n,c}(n)=1$, i.e.\ $1< m < n$ such that $B_{n,c}(m)=1$ and $B_{n,c}(k)=0$ for $m<k<n$. Because $f(k)=1$ for all $2\leq k\leq n-1$, we know that $B_{n,a}(m)=B_{n,b}(m)=0$.

At this point we make a table listing the values of the three intervals at the positions $1,\ldots,n$,  filling in the values that we know and marking the values that we have not yet determined with $*$.

\[
 \begin{matrix}
&1& 2 & \ldots & m-1 & m & m+1 & \ldots & n-1 & n \\
&&&&&&&&&\\
\interval{B_{n,a}}{1}{n}=&0&*&\ldots&*&0&*&\ldots&*&1\\
&&&&&&&&&\\
\interval{B_{n,b}}{1}{n}=&0&*&\ldots&*&0&*&\ldots&*&1\\
&&&&&&&&&\\
\interval{B_{n,c}}{1}{n}=&0&*&\ldots&*&1&0&\ldots&0&1
 \end{matrix}
\]

Because $f(n-1)=1$ we know that either $\red{B}_{n,a}(n-1)=1$ or $\red{B}_{n,b}(n-1)=1$ and we may assume it is $\red{B}_{n,a}(n-1)=1$. We are now going to apply the following argument over and over again. By Theorem~\ref{thm:equivalences}\ref{eq:even} we know that if we find an interval of length $l$ in a Sturmian sequence that contains $t$ 1s, then any other interval of length $l$ in the same sequence has to contain at least $t-1$ and at most $t+1$ 1s. In this case $\txtones(\interval{B_{n,a}}{n-1}{n})=2$ and so both $\txtones(\interval{B_{n,a}}{m-1}{m})\geq 1$ and  $\txtones(\interval{B_{n,a}}{m}{m+1})\geq 1$, which means $B_{n,a}(m-1)=B_{n,a}(m+1)=1$ and consequently $B_{n,b}(m-1)=B_{n,b}(m+1)=0$. Now our table looks as follows.

\[
 \begin{matrix}
&1& 2 & \ldots\,\;\; &  m-1 & m & m+1 & \ldots & n-1 & n \\
&&&&&&&&&\\
\interval{B_{n,a}}{1}{n}=&0&*&\ldots\,*&1&0&1&*\,\ldots\,*&1&1\\
&&&&&&&&&\\
\interval{B_{n,b}}{1}{n}=&0&*&\ldots\,*&0&0&0&*\,\ldots\,*&0&1\\
&&&&&&&&&\\
\interval{B_{n,c}}{1}{n}=&0&*&\ldots\,*&0&1&0&\ldots&0&1
 \end{matrix}
\]

But now $\txtones(\interval{B_{n,b}}{m-1}{m+1})=0$ and so $B_{n,b}(n-2)=0$ and $B_{n,a}(n-2)=1$. This gives $\txtones(\interval{B_{n,a}}{n-2}{n})=3$ which allows us to deduce $B_{n,a}(m+2)=1$ and $B_{n,b}(m+2)=0$. Then we have $\txtones(\interval{B_{n,b}}{m-1}{m+2})=0$ and so $B_{n,b}(n-3)=0$ and $B_{n,a}(n-3)=1$, which gives $\txtones(\interval{B_{n,a}}{n-3}{n})=4$ and so $B_{n,a}(m+3)=1$ and $B_{n,b}(m+3)=0$. We continue this argument inductively until we have shown that $B_{n,b}(k)=0$ for $m+1\leq k \leq n-1$. 

At this point both $\interval{B_{n,b}}{m-1}{n-1}$ and $\interval{B_{n,c}}{m+1}{n-1}$ are intervals of consecutive 0s, of length $n-m+1$ and $n-m-1$, respectively, where the latter is maximal. So the blocks of $B_{n,b}$ are strictly larger than the blocks of $B_{n,c}$. As the 1s in Sturmian sequences are evenly distributed, this implies $c=\ones(B_{n,c}) > \ones(B_{n,b})= b$, in contradiction to our assumption.
\end{proof}


{\small
{\bf Acknowledgements.} We would like to thank Christian Haase for drawing our attention to White's Theorem and asking if our observations about Sturmian sequences can be used to find a simpler proof. We thank Victor Alvarez for pointing us to the work of Balza-Gomez et al. We also thank Matthias Beck for helpful discussions about our results and getting us interested in lattice points inside triangles in the first place.
}

\bibliographystyle{amsalpha}
\bibliography{literatur}
\end{document}